\documentclass[a4paper]{amsart}
\usepackage[english]{babel}
\usepackage{amsmath,amsthm,amssymb,amsfonts}

\newtheorem{thm}{Theorem}
\newtheorem{prp}[thm]{Proposition}

\newtheorem{lem}[thm]{Lemma}
\newtheorem*{assA}{Assumption \AssA}
\newtheorem*{assB}{Assumption \AssB{t}}

\newcommand{\AssA}{$(\mathrm{A})$}
\newcommand{\AssB}[1]{$(\mathrm{B}_{#1})$}

\DeclareMathOperator{\Kern}{\mathcal{K}}
\DeclareMathOperator{\supp}{\mathrm{supp}}
\DeclareMathOperator{\tr}{\mathrm{tr}}

\DeclareMathOperator{\Span}{\mathrm{span}}

\newcommand{\Heis}{\mathrm{H}}

\newcommand{\N}{\mathbb{N}}
\newcommand{\Z}{\mathbb{Z}}
\newcommand{\R}{\mathbb{R}}
\newcommand{\C}{\mathbb{C}}

\newcommand{\tc}{\,:\,}

\newcommand{\leftopenint}{\left]}
\newcommand{\rightopenint}{\right[}
\newcommand{\leftclosedint}{\left[}
\newcommand{\rightclosedint}{\right]}

\newcommand{\vecL}{\mathbf{L}}
\newcommand{\vecU}{\mathbf{U}}

\newcommand{\Ell}{\mathcal{L}}

\newcommand{\id}{\mathrm{id}}

\newcommand{\lie}{\mathfrak}
\newcommand{\fst}{\lie{v}}
\newcommand{\ctr}{\lie{z}}
\newcommand{\dctr}{\lie{\dot z}}

\newcommand{\done}{{d_1}}
\newcommand{\dtwo}{{d_2}}
\newcommand{\tdone}{{\tilde\done}}
\newcommand{\jone}{j}
\newcommand{\jtwo}{k}

\newcommand{\defeq}{\mathrel{:=}}

\begin{document}
\title[Spectral multipliers on Heisenberg-Reiter groups]{Spectral multipliers on Heisenberg-Reiter and related groups}
\author{Alessio Martini}
\address{Alessio Martini \\ Mathematisches Seminar \\ Christian-Albrechts-Universit\"at zu Kiel \\ Ludewig-Meyn-Str.\ 4 \\ D-24118 Kiel \\ Germany}
\email{martini@math.uni-kiel.de}
\subjclass[2010]{43A22, 42B15}
\keywords{nilpotent Lie groups, Heisenberg-Reiter groups, spectral multipliers, sublaplacians, Mihlin-H\"ormander multipliers, singular integral operators}
\thanks{The author gratefully acknowledges the support of the Alexander von Humboldt Foundation.}

\begin{abstract}
Let $L$ be a homogeneous sublaplacian on a 2-step stratified Lie group $G$ of topological dimension $d$ and homogeneous dimension $Q$. By a theorem due to Christ and to Mauceri and Meda, an operator of the form $F(L)$ is bounded on $L^p$ for $1 < p < \infty$ if $F$ satisfies a scale-invariant smoothness condition of order $s > Q/2$. Under suitable assumptions on $G$ and $L$, here we show that a smoothness condition of order $s > d/2$ is sufficient. This extends to a larger class of $2$-step groups the results for the Heisenberg and related groups by M\"uller and Stein and by Hebisch, and for the free group $N_{3,2}$ by M\"uller and the author.
\end{abstract}

\maketitle

\section{Introduction}

Let $L$ be a homogeneous sublaplacian on a stratified Lie group $G$ of homogeneous dimension $Q$. Since $L$ is a positive selfadjoint operator on $L^2(G)$, a functional calculus for $L$ is defined via the spectral theorem and, for all Borel functions $F : \R \to \C$, the operator $F(L)$ is bounded on $L^2(G)$ whenever the ``spectral multiplier'' $F$ is bounded. As for the $L^p$-boundedness for $p \neq 2$ of $F(L)$, a sufficient condition in terms of smoothness properties of the multiplier $F$ is given by a theorem of Mihlin-H\"ormander type due to Christ \cite{christ_multipliers_1991} and Mauceri and Meda \cite{mauceri_vectorvalued_1990}: the operator $F(L)$ is of weak type $(1,1)$ and bounded on $L^p(G)$ for all $p \in \leftopenint 1,\infty \rightopenint$ whenever
\[
\|F\|_{MW_2^s} \defeq 
\sup_{t > 0} \| F(t \cdot) \, \eta \|_{W_2^s} < \infty
\]
for some $s > Q/2$, where $W_2^s(\R)$ is the $L^2$ Sobolev space of fractional order $s$ and $\eta \in C^\infty_c(\leftopenint 0,\infty \rightopenint)$ is a nontrivial auxiliary function.

A natural question that arises is if the smoothness condition $s > Q/2$ is sharp. This is clearly true when $G$ is abelian, so $Q$ coincides with the topological dimension $d$ of $G$, and $L$ is essentially the Laplace operator on $\R^d$. Take however the smallest nonabelian example of a stratified group, that is, the Heisenberg group $\Heis_1$, which is defined by endowing $\R \times \R \times \R$ with the group law
\begin{equation}\label{eq:heisenberggrouplaw}
(x,y,u) \cdot (x',y',u') = (x+x',y+y',u+u'+(xy'-x'y)/2)
\end{equation}
and with the automorphic dilations
\begin{equation}\label{eq:dilations}
\delta_t(x,y,u) = (tx,ty,t^2 u).
\end{equation}
$\Heis_1$ is a $2$-step stratified group, and the homogeneous dimension of $\Heis_1$ is $4$. Nevertheless, a result by M\"uller and Stein \cite{mller_spectral_1994} and Hebisch \cite{hebisch_multiplier_1993} shows that, for a homogeneous sublaplacian on $\Heis_1$, the smoothness condition on the multiplier can be pushed down to $s > d/2$, where $d = 3$ is the topological dimension of $\Heis_1$ (in \cite{mller_spectral_1994} it is also proved that the condition $s > d/2$ is sharp). Such an improvement of the Christ-Mauceri-Meda theorem holds not only for $\Heis_1$, but for the larger class of M\'etivier groups (and for direct products of M\'etivier and abelian groups), and also for differential operators other than sublaplacians (see, e.g., \cite{hebisch_multiplier_1995,martini_joint_2012}). However it is still an open question whether, for a general stratified Lie group (or even for a general $2$-step stratified group), the homogeneous dimension in the smoothness condition can be replaced by the topological dimension.

The aim of this paper is to extend the class of the $2$-step stratified groups and sublaplacians for which the smoothness condition in the multiplier theorem can be pushed down to half the topological dimension.

Take for instance the Heisenberg-Reiter group $\Heis_{\done,\dtwo}$ (cf.\ \cite{torreslopera_cohomology_1985}), defined by endowing $\R^{\dtwo \times \done} \times \R^\done \times \R^\dtwo$ with the group law \eqref{eq:heisenberggrouplaw} and the automorphic dilations \eqref{eq:dilations}; here however $\R^{\dtwo \times \done}$ is the set of the real $\dtwo \times \done$ matrices, and the products $xy',x'y$ in \eqref{eq:heisenberggrouplaw} are interpreted in the sense of matrix multiplication. $\Heis_{\done,\dtwo}$ is a $2$-step stratified group of homogeneous dimension $Q = \done\dtwo + \done + 2\dtwo$ and topological dimension $d = \done\dtwo + \done + \dtwo$. Despite the formal similarity with $\Heis_1$, the group $\Heis_{\done,\dtwo}$ does not fall into the class of M\'etivier groups, unless $\dtwo = 1$ (in fact, $\Heis_{\done,1}$ is the $(2\done+1)$-dimensional Heisenberg group $\Heis_\done$). Nevertheless, the technique presented here allows one to handle the case $\dtwo > 1$ too.

Namely, let $X_{1,1},\dots,X_{\dtwo,\done},Y_1,\dots,Y_\done,U_1,\dots,U_\dtwo$ be the left-invariant vector fields on $\Heis_{\done,\dtwo}$ extending the standard basis of $\R^{\dtwo \times \done} \times \R^\done \times \R^\dtwo$ at the identity, and define the homogeneous sublaplacian $L$ by
\[
L = -\sum_{\jone=1}^\done \sum_{\jtwo=1}^\dtwo X_{\jtwo,\jone}^2 - \sum_{\jone=1}^\done Y_\jone^2.
\]
Then a particular instance of our main result reads as follows.

\begin{thm}\label{thm:mhheisreit}
Suppose that a function $F : \R \to \C$ satisfies
\[\|F\|_{MW^s_2} < \infty\]
for some $s > d/2$. Then the operator $F(L)$ is of weak type $(1,1)$ and bounded on $L^p(\Heis_{\done,\dtwo})$ for all $p \in \leftopenint 1,\infty\rightopenint$.
\end{thm}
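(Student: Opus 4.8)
The plan is to derive Theorem~\ref{thm:mhheisreit} from the general multiplier theorem established in this paper (of which, as stated above, it is a particular instance): that theorem reduces the assertion --- through the standard Calder\'on--Zygmund machinery and the finite propagation speed of the wave propagator $\cos(t\sqrt L)$ --- to checking that the pair $(\Heis_{\done,\dtwo},L)$ satisfies Assumptions \AssA{} and \AssB{t}. Assumption \AssA{} is structural: roughly, it requires that $G$ carry a well-behaved Plancherel structure, in which the generic unitary representations are parametrised by a pair $(\mu,\nu)$ --- with $\mu$ ranging in $\ctr^*\setminus\{0\}$ and $\nu$ in (the dual of) the radical of the skew form $\omega_\mu(\cdot,\cdot)\defeq\mu([\cdot,\cdot])$ on $\fst$ --- and that the Plancherel density be $c\,|\mu|^a\,d\mu\,d\nu$ for some $a\ge 0$. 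For $\Heis_{\done,\dtwo}$ this is immediate from the bracket relations $[X_{\jtwo,\jone},Y_\jone]=U_\jtwo$, the remaining brackets of basis vectors vanishing: for each $\jone$ the form $\omega_\mu$ pairs the vector $(\mu_1,\dots,\mu_\dtwo)$ inside $\Span\{X_{1,\jone},\dots,X_{\dtwo,\jone}\}$ with $Y_\jone$, so $\omega_\mu$ has rank exactly $2\done$ for every $\mu\neq 0$, its radical has the constant dimension $\done(\dtwo-1)$, and the Pfaffian of its nondegenerate part equals $|\mu|^\done$ up to a constant; thus \AssA{} holds with $a=\done$. The same computation pins down the unitary dual: the generic representation $\pi_{\mu,\nu}$ acts on $L^2(\R^\done)$, and, completing squares in the Schr\"odinger model, $\pi_{\mu,\nu}(L)$ is (up to normalisation) unitarily equivalent to $\sum_{\jone=1}^\done\bigl(|\mu|^2 y_\jone^2-\partial_{y_\jone}^2\bigr)+|\nu|^2$, hence has pure point spectrum $\{\,|\mu|(2m+\done)+|\nu|^2 \tc m\in\N\,\}$ with the eigenvalue at level $m$ having multiplicity $\binom{m+\done-1}{\done-1}$.

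Inserting these data into the Plancherel formula of $\Heis_{\done,\dtwo}$ yields, for the convolution kernel $\Kern_{F(L)}$ of $F(L)$ and bounded $F$,
\[
\|\Kern_{F(L)}\|_{L^2(\Heis_{\done,\dtwo})}^2 = c\int_{\ctr^*}\int_{\R^{\done(\dtwo-1)}}\ \sum_{m\in\N}\binom{m+\done-1}{\done-1}\,\bigl|F\bigl(|\mu|(2m+\done)+|\nu|^2\bigr)\bigr|^2\,d\nu\ |\mu|^\done\,d\mu ,
\]
from which the unweighted bound $\|\Kern_{F(L)}\|_{L^2}\lesssim\|F\|_{W_2^s}$ for $F$ supported in $\leftclosedint 1/4,4\rightclosedint$ is read off at once (the inner sum then effectively runs over $m\lesssim|\mu|^{-1}$, where it contributes $\lesssim|\mu|^{-\done}$, cancelling the density). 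Assumption \AssB{t} is a weighted strengthening of this, in which $\Kern_{F(L)}$ is multiplied by a power of a weight dominating the Euclidean norm on $\Heis_{\done,\dtwo}\cong\R^d$, the exponent one can afford controlling the smoothness threshold, the target being to exceed $d/2$. Writing a point of $G$ as $(v,z)$ with $v\in\fst$, $z\in\ctr$, the weight splits into a first-layer part in $|v|$ and a central part in $|z|$. The first-layer part is essentially free: in the twisted-convolution picture, multiplication of the kernel by $|v|$ is conjugate to creation and annihilation operators, which merely shift Hermite indices (translating the argument of $F$ by a bounded amount), and in the $\done(\dtwo-1)$ radical directions of $\fst$ the twisted convolution is ordinary convolution, so the kernel is Schwartz-like there. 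The central part is the crux: since $z$ is dual to $\mu$, multiplying $\Kern_{F(L)}$ by $|z|^\kappa$ amounts to applying $\kappa$ $\mu$-derivatives to the integrand above, the dangerous terms being those in which the derivatives fall on $F$, producing factors $\sim m$ --- yet these are $\lesssim|\mu|^{-1}$ on the spectral support, where $|\mu|\,m\lesssim 1$.

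The main obstacle is that this bookkeeping, coupled with a direct Cauchy--Schwarz against the Euclidean volume, does not by itself reach the exponent $d/2$: the whole discrepancy $Q-d=\dtwo$ sits in the centre, and a crude use of the central weight is left with a $\mu$-integral diverging near $\mu=0$. The resolution --- and the point where the Heisenberg--Reiter structure, especially the large radical of $\omega_\mu$, is decisive --- is to handle the $\dtwo$ directions of the centre one at a time. The partial Fourier transform of $F(L)$ in the centre turns it into a field $(F(L^\mu))_{\mu\in\ctr^*}$ of $\mu$-twisted operators on $L^2(\fst)=L^2(\R^{\done(\dtwo+1)})$; in coordinates adapted to $\mu$, $L^\mu$ is a scaled harmonic oscillator in $2\done$ variables tensored with the Euclidean Laplacian in the remaining $\done(\dtwo-1)$ (radical) variables --- equivalently, it is the twisted operator attached, at the scalar central frequency $|\mu|$, to the quotient group $\Heis_{\done,\dtwo}/\exp(\ctr\cap e^\perp)\cong\Heis_\done\times\R^{\done(\dtwo-1)}$ for $e=\mu/|\mu|$, a direct product of a M\'etivier group with an abelian factor. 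For such operators the known sharp theory gives the required estimates with the \emph{Euclidean} threshold $\done(\dtwo+1)/2=\dim\fst/2$, uniformly in $\mu$; reconstructing $F(L)$ on $\Heis_{\done,\dtwo}$ by inverting the $\dtwo$-dimensional central Fourier transform --- an argument in the centre of Radon-transform type, generalising the one of M\"uller--Stein for the Heisenberg group --- costs a further $\dtwo/2$ derivatives, and $\done(\dtwo+1)/2+\dtwo/2=d/2$ is precisely the threshold sought. With Assumptions \AssA{} and \AssB{t} in hand the general theorem applies; the remaining ingredients --- the dyadic decomposition $F=\sum_j F_j$, the scale-invariance of the resulting $L^1$-bound on each dyadic kernel, and the Calder\'on--Zygmund argument producing weak type $(1,1)$ and then, by interpolation with the $L^2$ bound and duality, $L^p$ for $p\in\leftopenint 1,\infty\rightopenint$ --- are routine.
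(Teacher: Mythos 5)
Your overall architecture is the right one --- reduce Theorem~\ref{thm:mhheisreit} to the general multiplier theorem by verifying the structural hypothesis for $\Heis_{\done,\dtwo}$, and identify the crux as a weighted Plancherel estimate in which multiplying $\Kern_{F(L)}$ by $(1+|u|)^r$, $r<\dtwo/2$, costs only $r$ derivatives of $F$ --- and your computations of the bracket relations, the rank $2\done$ of $J_\mu$, the Plancherel density $|\mu|^{\done}\,d\mu\,d\nu$, and the spectrum $|\mu|(2m+\done)+|\nu|^2$ with multiplicity $\binom{m+\done-1}{\done-1}$ are all correct. Two caveats on the reduction itself: you paraphrase Assumption \AssA\ as a constant-rank/Plancherel-density condition, whereas the actual hypothesis is the simultaneous block-diagonalization of the $J_\eta$ (an orthogonal splitting $\fst=\fst_1\oplus\dots\oplus\fst_\done$ with $J_\eta P_\jone=P_\jone J_\eta$ and $J_\eta^2P_\jone$ of constant rank with a \emph{single} nonzero eigenvalue); your rank computation does establish this for $\Heis_{\done,\dtwo}$ with $r_\jone=1$, so no harm is done here, but the condition you state is strictly weaker than \AssA. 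Also, \AssB{t} is not an input to Theorem~\ref{thm:theorem} to be ``checked'' alongside \AssA: it is an abstract name for the conclusion of Proposition~\ref{prp:l1estimate}, which must be, and in the paper is, \emph{proved} from \AssA.

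The genuine gap is in that proof. You correctly observe that crude bookkeeping loses the full central codimension $\dtwo$, and you propose to recover $\dtwo/2$ derivatives by fibering $F(L)$ over central frequencies $\mu$, invoking the sharp theory for the twisted operators $L^\mu$ on the quotients $\Heis_\done\times\R^{\done(\dtwo-1)}$ ``uniformly in $\mu$'', and reconstructing via a central Fourier inversion of Radon type that ``costs a further $\dtwo/2$ derivatives''. That last clause \emph{is} the theorem: nothing in the proposal explains why the $\dtwo$-dimensional central inversion costs $\dtwo/2$ rather than $\dtwo$ derivatives, nor which uniform (and necessarily weighted) estimates on the fibres would make the reconstruction converge near $\mu=0$. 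The paper's actual mechanism is different and is where all the work lies: the Laguerre-function formula for $\Kern_{H(\vecL,\vecU)}$ (Proposition~\ref{prp:kernel}); the trade, via \eqref{eq:laguerred}, \eqref{eq:laguerrepm} and summation by parts, of each $\eta$-derivative for a discrete difference in the Laguerre index together with a raising of the Laguerre type (Proposition~\ref{prp:weightedkernel}); the orthogonality estimate of Lemma~\ref{lem:laguerreorthogonality} converting the resulting type/weight mismatch into further discrete differences; Lemma~\ref{lem:discretecontinuous} converting differences into derivatives of $F$; and a dyadic decomposition in $|\eta|$ whose pieces carry the factor $M^{\dtwo-2r}$, summable precisely for $r<\dtwo/2$ (Lemma~\ref{lem:weightedplancherel}, Proposition~\ref{prp:weightedl2}). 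Your treatment of the first-layer weight by creation/annihilation operators is likewise unproven and unnecessary: the paper simply interpolates the central estimate with the standard Mauceri--Meda weighted bound (Proposition~\ref{prp:improvedl2estimate}). As it stands, the proposal restates the difficulty and names a plausible alternative strategy, but does not prove the key estimate.
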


To the best of our knowledge, this result is new, at least in the case $\dtwo > \done$. In fact, in the case $\dtwo \leq \done$, the extension described in \cite{martini_joint_2012} of the technique of \cite{hebisch_multiplier_1993,hebisch_multiplier_1995} would give the same result. However the technique presented here is different, and yields the result irrespective of the parameters $\done,\dtwo$.

The left quotient of $\Heis_{\done,\dtwo}$ by the subgroup $\R^{\dtwo \times \done} \times \{0\} \times \{0\}$ gives a homogeneous space diffeomorphic to $\R^\done \times \R^\dtwo$, and the sublaplacian $L$ corresponds in the quotient to a Grushin operator. In recent joint works with Adam Sikora \cite{martini_grushin} and Detlef M\"uller \cite{martini_grushin2}, we proved for these Grushin operators on $\R^\done \times \R^\dtwo$ a sharp spectral multiplier theorem of Mihlin-H\"ormander type, where the smoothness requirement is again half the topological dimension of the ambient space.

The proofs in \cite{martini_grushin,martini_grushin2} rely heavily on properties of the eigenfunction expansions for the Hermite operators. Since a homogeneous sublaplacian on a $2$-step stratified group reduces to a Hermite operator in almost all irreducible unitary representations of the group, it is conceivable that an adaptation of the methods of \cite{martini_grushin,martini_grushin2} may give an improvement to the multiplier theorem for $2$-step stratified groups, even outside of the M\'etivier setting. A first result in this direction is shown in \cite{martini_n32}, where the free $2$-step nilpotent Lie group $N_{3,2}$ on three generators is considered, and properties of Laguerre polynomials are exploited (somehow in the spirit of \cite{de_michele_heisenberg_1979,mller_spectral_1994,mller_marcinkiewicz_1996}). The argument presented here refines and extends the one in \cite{martini_n32}.

Theorem~\ref{thm:mhheisreit} above is just a particular case of the result presented here, and we refer the reader to the next section for a precise statement. We remark that the analogue of Theorem~\ref{thm:mhheisreit} holds on $\Heis_{\done,\dtwo}$ when the sublaplacian $L$ has the more general form
\begin{equation}\label{eq:generalsublaplacian}
L = -\sum_{\jone=1}^\done \sum_{\jtwo,\jtwo'=0}^\dtwo a^\jone_{\jtwo,\jtwo'} X_{\jtwo,\jone} X_{\jtwo',\jone}
\end{equation}
where $X_{0,\jone} = Y_\jone$ and $(a^\jone_{\jtwo,\jtwo'})_{\jtwo,\jtwo'=0,\dots,\dtwo}$ is a positive-definite symmetric matrix for all $\jone \in \{1,\dots,\done\}$. Other groups can be considered too, e.g., the complexification of a Heisenberg-Reiter group, or the quotient of the direct product of $\Heis_{1,3}$ and $N_{3,2}$ given by identifying the respective centers.

\section{The general setting}\label{section:assumption}

Let $G$ be a connected, simply connected nilpotent Lie group of step $2$. Recall that, via exponential coordinates, $G$ may be identified with its Lie algebra $\lie{g}$, that is, the tangent space of $G$ at the identity. In turn, $\lie{g}$ may be identified with the Lie algebra of left-invariant vector fields on $G$. We refer to \cite{folland_hardy_1982} for the basic definitions and further details.

Let $\lie{g}$ be decomposed as $\fst \oplus \ctr$, where $\ctr$ is the center of $\lie{g}$, and let $\langle \cdot, \cdot \rangle$ be an inner product on $\fst$. The sublaplacian $L$ associated with the inner product is defined by $L = -\sum_j X_j^2$, where $\{X_j\}_j$ is any orthonormal basis of $\fst$. Note that, vice versa, by the Poincar\'e-Birkhoff-Witt theorem, any second-order operator $L$ of the form $-\sum_j X_j^2$ for some basis $\{X_j\}_j$ of $\lie{g}$ modulo $\ctr$ determines uniquely a linear complement $\fst = \Span \{X_j\}_j$ of $\ctr$ and an inner product on $\fst$ such that $\{X_j\}_j$ is orthonormal.

Let $\ctr^*$ be the dual of $\lie{z}$ and, for all $\eta \in \ctr^*$, define $J_\eta$ as the linear endomorphism of $\fst$ such that $\eta ([z,z']) = \langle J_\eta z, z' \rangle$ for all $z,z'\in \fst$. Clearly $J_\eta$ is skewadjoint with respect to the inner product, hence $J_\eta^2$ is selfadjoint and negative semidefinite, with even rank, for all $\eta \in \ctr^*$. Set moreover $\dctr = \ctr^* \setminus \{0\}$.

\begin{assA}
There exist integers $r_1,\dots,r_\done > 0$ and an orthogonal decomposition $\fst = \fst_1 \oplus \dots \oplus \fst_\done$ such that, if $P_1,\dots,P_\done$ are the corresponding orthogonal projections, then $J_\eta P_\jone = P_\jone J_\eta$ and $J^2_\eta P_\jone$ has rank $2r_j$ and a unique nonzero eigenvalue for all $\eta \in \dctr$ and all $\jone \in \{1,\dots,\done\}$.
\end{assA}

Note that from Assumption \AssA\  it follows that $J_\eta \neq 0$ for all $\eta \in \dctr$. Therefore $[\fst,\fst] = \ctr$, that is, the decomposition $\lie{g} = \fst \oplus \ctr$ is a stratification of $\lie{g}$, and the sublaplacian $L$ is hypoelliptic.

In fact $J_\eta$ has constant rank $2(r_1 + \dots + r_k)$ for all $\eta \in \dctr$. If $J_\eta$ is invertible for all $\eta \in \dctr$, then $G$ is a M\'etivier group, and if in particular $J_\eta^2 = -|\eta|^2 \id_{\fst}$ for some inner product norm $|\cdot|$ on $\ctr^*$, then $G$ is an H-type group. The main novelty of our Assumption \AssA\  is that it allows $J_\eta$ to have a nonzero kernel when $\eta \in \dctr$, although the dimension of the kernel must be constant.

The fact that $J_\eta$ has constant rank for $\eta \in \dctr$ depends only on the algebraic structure of $G$. What depends on the inner product, that is, on the sublaplacian $L$, are the values and multiplicities of the eigenvalues of the $J_\eta$. The above Assumption \AssA\  asks for a sort of simultaneous diagonalizability of the $J_\eta$.

Under our Assumption \AssA\  on the group $G$ and the sublaplacian $L$, 
we are able to prove the following multiplier theorem.

\begin{thm}\label{thm:theorem}
Suppose that a function $F : \R \to \C$ satisfies
\[\|F \|_{MW_2^s} < \infty\]
for some $s > (\dim G)/2$. Then the operator $F(L)$ is of weak type $(1,1)$ and bounded on $L^p(G)$ for all $p \in \leftopenint 1,\infty\rightopenint$.
\end{thm}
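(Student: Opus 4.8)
The plan is to follow the method of singular integral operators on spaces of homogeneous type. Endowed with a homogeneous quasi-norm $\rho$ and a Haar measure, $G$ is such a space, of homogeneous dimension $Q = \dim \fst + 2 \dim \ctr$, and $F(L)$ is bounded on $L^2(G)$ with operator norm at most $\|F\|_\infty \le \|F\|_{MW^s_2}$ by the spectral theorem; hence, by interpolation with the $L^2$ bound and duality, it is enough to prove that $F(L)$ is of weak type $(1,1)$, and this in turn follows, via the Calder\'on--Zygmund theory, from a H\"ormander-type integral estimate for the (left-invariant, hence convolution) kernel $\Kern_F$ of $F(L)$,
\[
\sup_{r > 0}\ \sup_{\rho(y) \le r}\ \int_{\rho(x) \ge 2r} \bigl| \Kern_F(y^{-1} x) - \Kern_F(x) \bigr|\, dx \ \lesssim\ \|F\|_{MW^s_2} .
\]
Decomposing $F = \sum_{n \in \Z} F_n$ by a dyadic partition of unity on $\leftopenint 0,\infty\rightopenint$ adapted to the dilations, and using that $L$ commutes with the dilations, one rescales each $F_n(L)$ to $G_n(L)$ with $G_n$ supported in a fixed compact subinterval of $\leftopenint 0,\infty\rightopenint$ (say $[1/4,1]$) and $\|G_n\|_{W^s_2} \lesssim \|F\|_{MW^s_2}$; then, by a standard argument --- exploiting the rapid decay of $\Kern_{F_n}$ away from its natural spatial scale $2^{-n/2}$ for the dyadic blocks of frequency above $r^{-2}$, and the smoothness of $\Kern_{F_n}$ at that scale for those below --- the H\"ormander condition, summed over $n$, reduces to a weighted $L^1$ estimate for the convolution kernel, valid for some $\beta > 0$ and all $F$ supported in $[1/4,1]$:
\[
\int_G |\Kern_F(x)|\, (1 + \rho(x))^\beta\, dx \ \lesssim_\beta\ \|F\|_{W^s_2}
\]
(together with the analogous estimate for the left-invariant derivatives of $\Kern_F$, obtained in the same way).

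Decomposing this integral over the dyadic annuli $\{\rho(x) \sim 2^j\}$, applying the Cauchy--Schwarz inequality, and using the volume bound $|\{\rho \sim 2^j\}| \lesssim 2^{jQ}$, the weighted $L^1$ estimate will follow from a weighted Plancherel-type estimate
\[
\bigl\| (1 + \rho(\cdot))^\alpha\, \Kern_F \bigr\|_{L^2(G)} \ \lesssim_\alpha\ \|F\|_{W^s_2}, \qquad \supp F \subseteq [1/4,1],
\]
required to hold for some $\alpha > Q/2$. This estimate is where everything happens: by general principles it holds whenever $s > \alpha$ (the weight costing, crudely, one derivative of $F$ per unit of its degree, just as it does on $\R^d$ --- this is in essence the estimate behind the Christ--Mauceri--Meda theorem, and is why that theorem carries the exponent $Q/2$), whereas the claim is that under Assumption \AssA\ it holds whenever $s > \alpha - (\dim \ctr)/2$. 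Since $Q/2 = (\dim G)/2 + (\dim \ctr)/2$, choosing $\alpha$ slightly above $Q/2$ then produces exactly the exponent $s > (\dim G)/2$. The whole difficulty of the theorem is thus to win these $(\dim \ctr)/2$ derivatives.

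To prove the weighted Plancherel estimate I would pass to the unitary dual of $G$. For $\eta \in \dctr$, Assumption \AssA\ describes the sublaplacian completely in the associated irreducible representation $\pi_\eta$: writing $-b_\jone(\eta)^2$ for the unique nonzero eigenvalue of $J_\eta^2 P_\jone$, and noting that the directions of $\ker J_\eta$ act trivially, the operator $L$ in $\pi_\eta$ is unitarily equivalent to a sum of rescaled Hermite operators,
\[
\sum_{\jone=1}^{\done} \bigl( -\Delta_{\R^{r_\jone}} + b_\jone(\eta)^2\, |\xi_\jone|^2 \bigr) \qquad \text{on} \qquad L^2(\R^{r_1} \times \dots \times \R^{r_{\done}}) ,
\]
so that it has pure point spectrum $\bigl\{ \sum_\jone b_\jone(\eta)\,(2 m_\jone + r_\jone) : \mathbf m \in \N^{\done} \bigr\}$, its spectral projections are tensor products over the blocks of Hermite spectral projections, and $\ker J_\eta$ --- of constant dimension --- enters only through extra factors $\prod_\jone b_\jone(\eta)^{r_\jone}$ in the Plancherel measure. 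Group Fourier inversion then exhibits $\Kern_F$ as an integral over $\dctr$, against an oscillatory factor $e^{i\eta(z)}$ in the central variable $z \in \ctr$, of a series in $\mathbf m$ whose terms are products over the blocks of Laguerre functions in the $\fst$-variables. Inserting this into the Plancherel expression for $\| (1 + \rho)^\alpha \Kern_F \|_2^2$, the weight is to be handled by a combination of three devices: the classical identities relating multiplication by the $\fst$-coordinates $\xi_\jone$ to the raising and lowering operators of the Hermite functions; integration by parts in $\eta$ for the central coordinates, which produces no boundary terms because $\supp F \subseteq [1/4,1]$ together with the spectral description above confines the integrand to the region $|\eta| \lesssim 1$; and, crucially, summation by parts in the multi-index $\mathbf m$ --- an $\eta$-derivative of the spectral argument $\sum_\jone b_\jone(\eta)(2m_\jone + r_\jone)$ being, after averaging, comparable to a finite difference in $\mathbf m$ --- which transfers the loss onto the Laguerre functions, where sharp estimates for Laguerre functions (and their differences) of large degree absorb it.

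The hard part will be precisely this last step, where the extra $(\dim \ctr)/2$ derivatives are gained. The degenerate regime $|\eta| \to 0$, in which the spectral gaps of $L$ in $\pi_\eta$ collapse and arbitrarily large Laguerre degrees become relevant, must be controlled by sharp \emph{uniform} bounds on Laguerre functions; and, in contrast with the M\'etivier (in particular H-type) situation treated by Hebisch and by M\"uller--Stein, one now has to cope simultaneously with the several blocks $\fst_1, \dots, \fst_{\done}$, whose frequencies $b_1(\eta), \dots, b_{\done}(\eta)$ are in general algebraically independent functions of $\eta$, and with the inflation of the Plancherel multiplicities caused by the (possibly nontrivial) common kernel of the $J_\eta$. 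Once the weighted Plancherel estimate is secured with these exponents, what remains --- reassembling the dyadic pieces, checking the summability over scales in the H\"ormander condition, and the closing interpolation and duality --- is routine.
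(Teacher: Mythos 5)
Your architecture coincides with the paper's: reduction via Calder\'on--Zygmund theory to a weighted Plancherel estimate for multipliers supported in a fixed compact set (this is the content of Proposition~\ref{prp:l1estimate}), group Fourier inversion expressing $\Kern_{F(L)}$ through Laguerre functions indexed by the blocks of Assumption \AssA, and the extra $(\dim\ctr)/2$ derivatives harvested by combining $\eta$-differentiation, finite differences in the Hermite indices, and Laguerre orthogonality. Your reading of the representation theory (the spectrum $\sum_\jone b_\jone(\eta)(2m_\jone+r_\jone)$, the kernel of $J_\eta$ entering only through the factors $\prod_\jone b_\jone(\eta)^{r_\jone}$ in the Plancherel measure) and your identification of the small-$|\eta|$ regime as the place where the gain must be extracted both match the paper, which truncates dyadically in $|\eta|$ and sums $M^{\dim\ctr/2-r}$ over scales $M\le C$, convergent precisely for $r<(\dim\ctr)/2$.

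The gap is in the formulation of the estimate on which everything hinges. You claim $\|(1+\rho)^\alpha\Kern_F\|_2\lesssim\|F\|_{W_2^s}$ for $s>\alpha-(\dim\ctr)/2$, i.e.\ a gain of $(\dim\ctr)/2$ derivatives for the \emph{full} homogeneous weight. The paper does not prove this, and the mechanism you invoke does not give it: in the region $|u|\lesssim 1$ one has $(1+\rho)^\alpha\approx(1+|z|)^\alpha$, and a weight in the first-layer variables becomes, after rescaling, a factor $t^h$ against $|\Ell_n^{(k)}(t)|^2$ with $h$ exceeding the Laguerre type $k$; the orthogonality relation then yields $\sum_n|f(n)|^2(1+n)^{h+\cdots}$ with \emph{no} compensating finite differences (in Lemma~\ref{lem:laguerreorthogonality} the number of differences is $(k-h)_+=0$), so no derivatives of $F$ are gained there and the resulting sum over the Hermite indices is not summable without spending the full order $\alpha$ of smoothness. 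The gain is available only for the central part of the weight: the correct statement (Proposition~\ref{prp:improvedl2estimate}) uses the hybrid weight $(1+|(z,u)|_\delta)^{a}(1+|u|)^r$ at cost $\beta>a+r$ with $r<(\dim\ctr)/2$, the factor $(1+|u|)^r$ being worth $(1+\rho)^{2r}$ in the central directions while costing only $r$ derivatives. Your final numerology survives this correction --- taking $2a>\dim\fst$ (suitably split between $z$ and $u$) and $r$ close to $(\dim\ctr)/2$ gives $w_s^{-1}\in L^2(G)$ and total cost tending to $(\dim\fst)/2+(\dim\ctr)/2=(\dim G)/2$ --- but as written your reduction rests on an estimate that is strictly stronger than needed, unsupported by the method you sketch, and quite possibly false.
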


The previously mentioned Heisenberg-Reiter groups $\Heis_{\done,\dtwo}$ satisfy Assumption \AssA, where the inner product is determined by the sublaplacian \eqref{eq:generalsublaplacian}, and the orthogonal decomposition of the first layer is given by the natural isomorphism $\R^{\dtwo \times \done} \times \R^{\done} \cong (\R^{\dtwo} \times \R)^\done$. Other examples are the free $2$-step nilpotent Lie group $N_{3,2}$ on $3$ generators, considered in \cite{martini_n32}, and its complexification $N_{3,2}^\C$. Moreover, if $G_1$ and $G_2$ satisfy Assumption \AssA, and their centers have the same dimension, then the quotient of $G_1 \times G_2$ given by any linear identification of the centers satisfy Assumption \AssA. Note that the direct product $G_1 \times G_2$ itself does not satisfy Assumption \AssA, but an adaptation of the argument presented here allows one to consider that case too. We postpone to the end of this paper a more detailed discussion of these remarks.

From now on, unless otherwise specified, we assume that $G$ and $L$ are a $2$-step stratified group and a homogeneous sublaplacian on $G$ satisfying Assumption \AssA. Since $L$ is a left-invariant operator, so is any operator of the form $F(L)$. 
Let $\Kern_{F(L)}$ denote the convolution kernel of $F(L)$. As shown, e.g., by \cite[Theorem 4.6]{martini_joint_2012}, the previous theorem is a consequence of the following estimate.

\begin{prp}\label{prp:l1estimate}
For all $s > (\dim G)/2$, there exists a weight $w_s : G \to \leftclosedint 1,\infty \rightopenint$ such that $w_s^{-1} \in L^2(G)$ and, for all compact sets $K \subseteq \R$ and for all functions $F : \R \to \C$ with $\supp F \subseteq K$,
\begin{equation}\label{eq:thel2estimate}
\|w_s \, \Kern_{F(L)}\|_2 \leq C_{K,s} \|F\|_{W_2^s};
\end{equation}
in particular
\begin{equation}\label{eq:thel1estimate}
\|\Kern_{F(L)}\|_1 \leq C_{K,s} \|F\|_{W_2^s}.
\end{equation}
\end{prp}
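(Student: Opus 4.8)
The plan is to analyze $F(L)$ via the Plancherel theorem for $G$, reducing the computation of $\|w_s\,\Kern_{F(L)}\|_2$ to an integral over the unitary dual. Since $G$ is a connected simply connected nilpotent Lie group of step $2$, its Plancherel measure is supported (up to a null set) on the "generic" coadjoint orbits, parametrized by $\eta \in \dctr$ together with the extra data coming from the kernel of $J_\eta$. In an irreducible representation $\pi_\eta$ attached to $\eta \in \dctr$, the operator $L$ becomes, after a symplectic change of variables adapted to the decomposition $\fst = \fst_1 \oplus \dots \oplus \fst_\done$ furnished by Assumption \AssA, a direct sum (over $\jone$) of rescaled Hermite operators on $\R^{r_\jone}$ plus a "free" Euclidean Laplacian coming from $\ker J_\eta$. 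The key point that Assumption \AssA\ buys us is that this decomposition is \emph{simultaneous}: the eigenvalue $b_\jone(\eta)$ of $|J_\eta P_\jone|$ is the only one on the $\fst_\jone$ block, so $\pi_\eta(L)$ has an explicit joint spectral decomposition in terms of Hermite functions $h_{\mathbf m}$, $\mathbf m \in \N^{r_1+\dots+r_\done}$, with eigenvalues $\sum_\jone (2|m_\jone|+r_\jone) b_\jone(\eta)$ on the oscillator part. Thus $\pi_\eta(F(L))$ acts diagonally on this Hermite basis, and $\Kern_{F(L)}(g)$ can be written as an integral over $\eta$ of a trace against $\pi_\eta(g)$.

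The next step is to transfer the weight $w_s$. One chooses $w_s$ to be a product of powers of the Euclidean coordinate functions on $\fst$ and $\ctr$ (times a logarithmic correction so that $w_s^{-1}\in L^2$ precisely when $s>(\dim G)/2$); this is the same mechanism as in \cite{martini_n32} and \cite{martini_joint_2012}. Multiplication by a coordinate function on the group side corresponds, under the group Fourier transform, to a derivative in the representation parameters — either a derivative in $\eta \in \dctr$, or, on the oscillator variables, the standard creation/annihilation relations that turn multiplication by $h$-variables into finite-difference operators on the Hermite index $\mathbf m$ plus derivatives in $b_\jone(\eta)$. Putting these together and using Plancherel, $\|w_s \Kern_{F(L)}\|_2^2$ becomes an integral over $\eta \in \dctr$ of a weighted $\ell^2$-sum over $\mathbf m$ of quantities of the form $|(\partial^\alpha_\eta \partial^\beta_b)[\text{difference operator in }\mathbf m](F(\lambda_{\mathbf m}(\eta)))|^2$, where $\lambda_{\mathbf m}(\eta)$ is the eigenvalue above. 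Because $F$ is supported in a fixed compact set $K$, the eigenvalue constraint $\lambda_{\mathbf m}(\eta)\in K$ forces $|\mathbf m|$ and $b_\jone(\eta)$ into a bounded region (after the natural homogeneity rescaling), so all the sums and integrals are effectively over compact sets, and one is left to bound $\|F\|_{W_2^s}$ from above — here one uses that a $W_2^s$ bound with $s$ just above $(\dim G)/2$ controls exactly enough derivatives/differences of $F$ composed with the smooth eigenvalue functions. This is where the Laguerre/Hermite combinatorics of \cite{martini_n32} enter: the discrete differences in $\mathbf m$ of $F(\lambda_{\mathbf m}(\eta))$ must be estimated in terms of derivatives of $F$, uniformly in $\eta$, with a gain in the number of free parameters that accounts for the difference $Q-d = r_1+\dots+r_\done$ between homogeneous and topological dimension.

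The main obstacle I anticipate is precisely the bookkeeping in this last step: one must choose the weight $w_s$ (the exponents of the various coordinate groups) so that, simultaneously, (i) $w_s^{-1}\in L^2(G)$ holds with the sharp threshold $s>(\dim G)/2$, and (ii) the corresponding derivatives/differences on the Fourier side are all absorbed by $\|F\|_{W_2^s}$ with constants uniform in $\eta$ and in $\mathbf m$ subject to $\lambda_{\mathbf m}(\eta)\in K$. The delicate interplay is between the \emph{center} directions (which contribute derivatives in $\eta$, i.e.\ in the $b_\jone$ and in a genuine $(\dim\ctr)$-dimensional parameter) and the \emph{oscillator} directions (which contribute discrete differences in $\mathbf m$ that, by the sharp Hermite/Laguerre estimates, behave like half-integer-order smoothness in $F$). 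Getting these to add up exactly to $(\dim G)/2$, rather than something between $(\dim G)/2$ and $Q/2$, requires exploiting the full strength of Assumption \AssA\ — namely that each block $\fst_\jone$ carries a single oscillator frequency, so that the "spectral gap" structure makes the difference operators in $m_\jone$ one-dimensional rather than $r_\jone$-dimensional. Once the weight is correctly calibrated and these estimates are in place, \eqref{eq:thel2estimate} follows, and \eqref{eq:thel1estimate} is then immediate from Cauchy–Schwarz: $\|\Kern_{F(L)}\|_1 \le \|w_s^{-1}\|_2 \, \|w_s \Kern_{F(L)}\|_2$.
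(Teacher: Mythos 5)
Your overall architecture --- express $\Kern_{F(L)}$ through the representations attached to $\eta\in\dctr$, where $L$ becomes a sum of rescaled Hermite operators thanks to Assumption \AssA, convert the weight into derivatives and finite differences on the Fourier side, and finish with Cauchy--Schwarz --- matches the paper's in outline, and your last step $\|\Kern_{F(L)}\|_1\le\|w_s^{-1}\|_2\,\|w_s\Kern_{F(L)}\|_2$ is exactly how \eqref{eq:thel1estimate} is derived from \eqref{eq:thel2estimate}. But two points in your calibration are wrong, and they are precisely the points where the theorem is won or lost. First, $Q-d$ equals $\dim\ctr=\dtwo$, not $r_1+\dots+r_\done$ (for $\Heis_{\done,\dtwo}$ these are $\dtwo$ and $\done$; for $N_{3,2}$ they are $3$ and $1$); correspondingly, the entire gain comes from the \emph{central} directions, not from the oscillator indices. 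The paper's new estimate (Proposition~\ref{prp:weightedl2}) uses a weight $(1+|u|)^r$ in the central variable only, valid for $r<\dtwo/2$ at the cost of $r$ derivatives of $F$, where the standard homogeneous estimate would charge $2r$; the first-layer weight is not produced by any new argument but by interpolating with the classical Mauceri--Meda weighted estimate in the homogeneous norm (Proposition~\ref{prp:improvedl2estimate}). Your plan to put Euclidean weights on the $\fst$-coordinates and absorb the resulting creation/annihilation terms offers no gain there and is not what the paper does: in the paper the finite differences $\delta^\beta$ in the Laguerre index arise from $\partial_\eta$ hitting the Laguerre functions (summation by parts via \eqref{eq:laguerred}) and from the type/weight mismatch in the orthogonality relations (Lemma~\ref{lem:laguerreorthogonality}), not from first-layer weights.

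Second, your claim that the support condition $\lambda_{\mathbf m}(\eta)\in K$ confines everything to a compact region is not accurate near $\eta=0$: as $|\eta|\to0$ the number of admissible indices grows like $|\eta|^{-\done}$, and this is exactly where the restriction $r<\dtwo/2$ (hence the threshold $(\dim G)/2$) is decided. The paper handles it by a dyadic decomposition in the central frequency, $F(L)=\sum_k F_{2^k}(L,\vecU)$ with $F_M(\lambda,\eta)=F(\lambda)\chi(|\eta|/M)$, proving the single-scale bound $\|\,|u|^r\Kern_{F_M(L,\vecU)}\|_2^2\le C M^{\dtwo-2r}\|F\|^2_{W_2^r}$ (Lemma~\ref{lem:weightedplancherel}) and summing the resulting geometric series over $M=2^k$, which converges precisely for $r<\dtwo/2$. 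Without this decomposition and the explicit power of $M$, your scheme has no mechanism for producing the sharp exponent. So while the skeleton of your proposal is right, the choice of weight, the source of the discrete differences, and the accounting of where the saving $(Q-d)/2=\dtwo/2$ comes from would all need to be corrected before the argument could be completed.
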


The rest of the paper, except for the last section, is devoted to the proof of this estimate.

\section{The joint functional calculus}

Let $\dtwo = \dim \ctr$, and let $U_1,\dots,U_\dtwo$ be any basis of the center $\ctr$. Let moreover the ``partial sublaplacian'' $L_\jone$ be defined as $L_\jone = - \sum_l X^2_{\jone,l}$, where $\{X_{\jone,l}\}_l$ is any orthonormal basis of $\fst_\jone$, for all $\jone \in \{1,\dots,\done\}$; in particular $L = L_1 + \dots + L_\done$. Then the left-invariant differential operators
\begin{equation}\label{eq:operators}
L_1,\dots,L_\done,-iU_1,\dots,-iU_\dtwo
\end{equation}
are essentially self-adjoint and commute strongly, hence they admit a joint functional calculus (see, e.g., \cite{martini_spectral_2011}). Therefore, if $\vecL$ and $\vecU$ denote the ``vectors of operators'' $(L_1,\dots,L_\done)$ and $(-iU_1,\dots,-iU_\dtwo)$, and if we identify $\ctr^*$ with $\R^\dtwo$ via the dual basis of $U_1,\dots,U_n$, then, for all bounded Borel functions $H : \R^\done \times \ctr^* \to \C$, the operator $H(\vecL,\vecU)$ is defined and bounded on $L^2(G)$. Moreover $H(\vecL,\vecU)$ is left-invariant, and we can express its convolution kernel $\Kern_{H(\vecL,\vecU)}$ in terms of Laguerre functions.

Namely, for all $n,k \in \N$, let
\[L_n^{(k)}(t) = \frac{t^{-k} e^t}{n!} \left( \frac{d}{dt} \right)^n (t^{k+n} e^{-t})\]
be the $n$-th Laguerre polynomial of type $k$, and define
\[\Ell_n^{(k)}(t) = (-1)^n e^{-t} L_n^{(k)}(2t).\]

Note that, by Assumption \AssA, for all $\eta \in \dctr$ and $\jone \in \{1,\dots,\done\}$, 
\[
J_\eta^2 P_j = -(b_\jone^\eta)^2 P_\jone^\eta
\]
for some orthogonal projection $P_\jone^\eta$ of rank $2r_\jone$ and some $b_\jone^\eta > 0$. Set moreover
\[
\bar P_\jone^\eta = P_\jone - P_\jone^\eta.
\]
Modulo reordering the $\fst_\jone$ in the decomposition of $\fst$, we may suppose that there exists $\tdone \in \{0,\dots,\done\}$ such that $\dim \fst_\jone > 2r_\jone$ if $\jone \leq \tdone$, and $\dim \fst_\jone = 2r_\jone$ if $j > \tdone$. In particular, $\bar P^\eta_\jone = 0$ and $P^\eta_\jone = P_\jone$ for all $\jone > \tdone$ and $\eta \in \dctr$.  We will also use the abbreviations $r = (r_1,\dots,r_\done)$, $\R^r = \R^{r_1} \times \dots \times \R^{r_\done}$, $\N^r = \N^{r_1} \times \dots \times \N^{r_\done}$, $|r| = r_1 + \dots + r_\done$. Moreover $\langle \cdot, \cdot \rangle$ will also denote the duality pairing $\lie{z}^* \times \lie{z} \to \R$.

\begin{prp}\label{prp:kernel}
Let $H : \R^{\done} \times \ctr^* \to \C$ be in the Schwartz class, and set 
\begin{multline}\label{eq:multiplier}
m(n,\mu,\eta) = H((2n_1+r_1) b_1^\eta + \mu_1, \dots,(2n_{\tdone} +r_{\tdone}) b_{\tdone}^\eta +\mu_{\tdone},\\
(2n_{\tdone+1} +r_{\tdone+1}) b_{\tdone+1}^\eta,\dots,(2n_\done +r_\done) b_\done^\eta, \eta)
\end{multline}
for all $n \in \N^\done$, $\mu \in \R^{\tdone}$, $\eta \in \dctr$. Then, for all $(z,u) \in G$,
\begin{multline}\label{eq:kernel}
\Kern_{H(\vecL,\vecU)}(z,u) 
 = \frac{2^{|r|}}{(2\pi)^{\dim G}} \int_{\dctr} \int_{\fst} \sum_{n \in \N^\done} m(n,(|\bar P^\eta_1 \xi|^2,\dots,|\bar P^\eta_{\tdone} \xi|^2),\eta) \\
\times \left[ \prod_{\jone=1}^\done \Ell_{n_\jone}^{(r_\jone-1)}(|P^\eta_\jone \xi|^2 /b^\eta_\jone) \right]
\, e^{i \langle \xi, z \rangle} \, e^{i \langle \eta, u \rangle} \,d\xi \,d\eta.
\end{multline}
\end{prp}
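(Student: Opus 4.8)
The plan is to compute the convolution kernel of $H(\vecL,\vecU)$ via the Plancherel formula for $G$, reducing everything to the representation theory of the $2$-step group. Since $G$ is a connected simply connected nilpotent Lie group, the Kirillov orbit method (or equivalently the classical Plancherel theorem for such groups) expresses the kernel of a left-invariant operator as an integral over the coadjoint orbits — here parametrised by $\eta \in \dctr$ together with the center of the little group — of the operator-norm kernels in the corresponding irreducible representations $\pi_\eta$. Since $U_1,\dots,U_\dtwo$ are central, $\pi_\eta(U_i)$ is scalar multiplication by $i\langle\eta,U_i\rangle$, so $\vecU$ acts as $\eta$ itself; this explains the last argument of $H$ in \eqref{eq:multiplier}. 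The nontrivial point is the action of the partial sublaplacians $L_\jone$.

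The key computation is that, for fixed $\eta \in \dctr$, the representation $\pi_\eta$ restricted to each $\fst_\jone$ decomposes, by Assumption \AssA, into a piece on which $J_\eta$ is invertible with single eigenvalue $\pm i b_\jone^\eta$ (the $2r_\jone$-dimensional range of $P_\jone^\eta$) and a piece on which $J_\eta$ vanishes (the kernel, i.e.\ the range of $\bar P_\jone^\eta$). On the first piece, $\pi_\eta(L_\jone)$ is unitarily equivalent to a rescaled $r_\jone$-dimensional Hermite operator with frequency $b_\jone^\eta$, whose spectrum is $\{(2n_\jone + r_\jone)b_\jone^\eta : n_\jone \in \N^{r_\jone}\}$ — actually one must be careful: the eigenvalue is $(2|n_\jone|+r_\jone)b_\jone^\eta$ with $n_\jone \in \N^{r_\jone}$ a multi-index, but after summing the spectral projections the relevant kernel is governed by the Laguerre function $\Ell_{n_\jone}^{(r_\jone-1)}$ of a single nonnegative integer $n_\jone$; this is the standard identity expressing the sum over an $r_\jone$-dimensional Hermite level set in terms of a one-dimensional Laguerre function, cf.\ the computations underlying \cite{mller_spectral_1994,martini_n32}. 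On the kernel piece (present only for $\jone \le \tdone$), $\pi_\eta(L_\jone)$ acts as the Laplacian in $\dim\fst_\jone - 2r_\jone$ Euclidean variables, contributing the continuous parameter $\mu_\jone = |\bar P_\jone^\eta \xi|^2$ after a Fourier transform in those variables.

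Concretely, the steps are: (i) fix $\eta$ and choose adapted coordinates on $\fst$ splitting each $\fst_\jone$ as $\mathrm{ran}\,P_\jone^\eta \oplus \mathrm{ran}\,\bar P_\jone^\eta$; (ii) write down $\pi_\eta$ in a Schr\"odinger-type model, so that $\pi_\eta(L_\jone)$ becomes, on the first summand, a harmonic oscillator and on the second a free Laplacian; (iii) apply the joint spectral decomposition to get $\pi_\eta(H(\vecL,\vecU))$ as the sum over $n \in \N^\done$ and the integral over the $\mu$-variables of $m(n,\mu,\eta)$ times the product of Hermite spectral projections and the Euclidean Fourier multiplier; (iv) use Mehler's formula / the Hermite–Laguerre identity to rewrite the integral kernel of the $n_\jone$-th Hermite spectral projection (summed over the multi-index of fixed length $n_\jone$) as $2^{r_\jone} b_\jone^{\eta,-r_\jone}$ times $\Ell_{n_\jone}^{(r_\jone-1)}(\cdot)$ evaluated at the appropriate quadratic form, which after the change of variables matching $|P_\jone^\eta \xi|^2/b_\jone^\eta$ yields the bracketed product in \eqref{eq:kernel}; (v) assemble the pieces and plug into the Plancherel inversion formula, carefully tracking the constant $2^{|r|}/(2\pi)^{\dim G}$ from the normalisations of the Fourier transform, the Plancherel measure, and the Laguerre functions; (vi) justify convergence and the interchange of sums and integrals using that $H$ is Schwartz, which makes $m$ rapidly decaying in $n$ and in $\eta$ uniformly.

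The main obstacle will be step (iv) together with the bookkeeping in step (v): getting the Hermite-to-Laguerre reduction exactly right — including the shift by $r_\jone$ in the eigenvalue, the type $r_\jone - 1$ of the Laguerre polynomial, and the factor $2^{r_\jone}$ — and then verifying that all the normalisation constants (the $(2\pi)^{\dim G}$, the $2^{|r|}$, the scaling by $b_\jone^\eta$ hidden in the measure versus in the argument of $\Ell$) combine to give precisely \eqref{eq:kernel}. The analytic part (convergence, Fubini) is routine given that $H$ is Schwartz, so the real work is the explicit representation-theoretic computation and the constant-chasing.
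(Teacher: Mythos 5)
Your plan follows essentially the same route as the paper's proof: group Plancherel/Fourier inversion over representations parametrized by $\eta$ together with a parameter ranging over $\ker J_\eta$, a Schr\"odinger model in which the partial sublaplacians become rescaled Hermite operators plus a contribution from the degenerate directions, and the standard reduction of the level-$n_j$ Hermite data (in the paper, the summed Fourier--Wigner matrix coefficients) to the Laguerre function $\Ell_{n_j}^{(r_j-1)}$. The only cosmetic difference is that the paper realizes the degenerate directions as scalars $|P_j\rho|^2$ in the irreducible representations $\pi_{\eta,\rho}$ and recovers $\mu_j=|\bar P_j^\eta\xi|^2$ from the $\rho$-integral in the Plancherel formula, whereas you describe them as a free Laplacian diagonalized by a Euclidean Fourier transform; the two bookkeepings are equivalent.
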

\begin{proof}
For all $\eta \in \dctr$ and $\jone \in \{1,\dots,\done\}$, let $E^\eta_{\jone,1},\bar E^\eta_{\jone,1},\dots,E^\eta_{\jone,r_\jone},\bar E^\eta_{\jone,r_\jone}$ be an orthonormal basis of the range of $P^\eta_\jone$ such that
\[J_\eta E^\eta_{\jone,l} = b^\eta_\jone \bar E^\eta_{\jone,l}, \qquad J_\eta \bar E^\eta_{\jone,l} = - b^\eta_\jone E^\eta_{\jone,l}, \qquad\text{for $l=1,\dots,r_\jone$.}\]
Hence, for all $z \in \fst$, $\eta \in \dctr$, and $\jone \in \{1,\dots,\done\}$, we can write
\[P^\eta_\jone z = \sum_{l=1}^{r_\jone} (z^\eta_{\jone,l} E^\eta_\jone + \bar z^\eta_{\jone,l} \bar E^\eta_{\jone,l})\]
for some uniquely determined $z^\eta_{\jone,l},\bar z^\eta_{\jone,l} \in \R$; set then $z^\eta_\jone = (z^\eta_{\jone,1},\dots,z^\eta_{\jone,r_\jone})$, $\bar z^\eta_\jone = (\bar z^\eta_{\jone,1},\dots,\bar z^\eta_{\jone,r_\jone})$, and moreover $z^\eta = (z^\eta_1,\dots,z^\eta_\done)$ and $\bar z^\eta = (\bar z^\eta_1,\dots,\bar z^\eta_\done)$.

For all $\eta \in \dctr$ and all $\rho \in \ker J_\eta$, an irreducible unitary representation $\pi_{\eta,\rho}$ of $G$ on $L^2(\R^r)$ is defined by
\[
\pi_{\eta,\rho}(z,u) \phi(v) = e^{i \langle \eta , u \rangle} e^{i \langle \rho, \bar P^\eta z \rangle} e^{i \sum_{\jone = 1}^\done b_\jone^\eta \langle v + z^\eta_\jone/2, \bar z^\eta_\jone \rangle}  \phi(z^\eta + v)
\]
for all $(z,u) \in G$, $v \in \R^r$, $\phi \in L^2(\R^r)$, where $\bar P^\eta = \bar P^\eta_1 + \dots + \bar P^\eta_{\tdone}$ is the orthogonal projection onto $\ker J_\eta$.
Following, e.g., \cite[\S2]{astengo_hardys_2000}, one can see that these representations are sufficient to write the Plancherel formula for the group Fourier transform of $G$, and the corresponding Fourier inversion formula:
\begin{equation}\label{eq:groupinversion}
f(z,u) = (2\pi)^{|r|-\dim G} \int_{\dctr} \int_{\ker J_\eta} \tr (\pi_{\eta,\rho}(z,u) \, \pi_{\eta,\rho}(f)) \, \prod_{\jone=1}^\done (b_\jone^\eta)^{r_\jone} \, d\rho \, d\eta
\end{equation}
for all $f : G \to \C$ in the Schwartz class and all $(z,u) \in G$, where $\pi_{\eta,\rho}(f) = \int_{G} f(g) \, \pi_{\eta,\rho}(g^{-1}) \,dg$.

Fix $\eta \in \dctr$ and $\rho \in \ker J_\eta$. The operators \eqref{eq:operators} are represented in $\pi_{\eta,\rho}$ as
\begin{equation}\label{eq:repops}
d\pi_{\eta,\rho}(L_\jone) = - \Delta_{v_\jone}^2 + (b_\jone^\eta)^2 |v_\jone|^2 + |P_\jone \rho|^2, \qquad d\pi_{\eta,\rho}(-iU_\jtwo) = \eta_\jtwo,
\end{equation}
for all $\jone \in \{1,\dots,\done\}$ and $\jtwo \in \{1,\dots,\dtwo\}$, where $v_\jone \in \R^{r_j}$ denotes the $\jone$-th component of $v \in \R^r$, and $\Delta_{v_\jone}$ denotes the corresponding partial Laplacian. Let $h_\ell$ denote the $\ell$-th Hermite function, that is,
\[
h_\ell(t) = (-1)^\ell (\ell! \, 2^\ell \sqrt{\pi})^{-1/2} e^{t^2/2} \left(\frac{d}{dt}\right)^\ell e^{-t^2},
\]
and, for all $\omega \in \N^r$, define $\tilde h_{\eta,\omega} : \R^r \to \R$ by
\[
\tilde h_{\eta,\omega} = \tilde h_{\eta,\omega,1} \otimes \dots \otimes \tilde h_{\eta,\omega,\done}, \qquad \tilde h_{\eta,\omega,\jone}(v_\jone) = (b_\jone^\eta)^{r_\jone/4} \prod_{l=1}^{r_\jone} h_{\omega_{\jone,l}}((b_\jone^\eta)^{1/2} v_{\jone,l}),
\]
for all $\jone \in \{1,\dots,\done\}$, where $\omega_{\jone,l}$ and $v_{\jone,l}$ denote the $l$-th components of $\omega_\jone \in \N^{r_\jone}$ and $v_\jone \in \R^{r_\jone}$. Then $\{\tilde h_{\eta,\omega}\}_{\omega \in \N^r}$ is a complete orthonormal system for $L^2(\R^r)$, made of joint eigenfunctions of the operators \eqref{eq:repops}. In fact,
\begin{equation}\label{eq:eigenvalues}
\begin{aligned}
d\pi_{\eta,\rho}(L_\jone) \tilde h_{\eta,\omega} &= ((2|\omega_\jone|+r_\jone)b_\jone^\eta + |P_\jone \rho|^2) \, \tilde h_{\eta,\omega}, \\
d\pi_{\eta,\rho}(-iU_\jtwo) \tilde h_{\eta,\omega} &= \eta_\jtwo \, \tilde h_{\eta,\omega},
\end{aligned}
\end{equation}
where $|\omega_\jone| = \omega_{\jone,1} + \dots + \omega_{\jone,r_\jone}$; it should be observed that $P_\jone \rho = 0$ if $\jone > \tdone$.

Since $H : \R^\done \times \ctr^* \to \C$ is in the Schwartz class, $\Kern_{H(\vecL,\vecU)} : G \to \C$ is in the Schwartz class too (see \cite[Theorem 5.2]{astengo_gelfand_2009} or \cite[\S4.2]{martini_multipliers_2010}). Moreover 
\[\pi_{\eta,\rho}(\Kern_{H(\vecL,\vecU)}) \tilde h_{\eta,\omega} = m((|\omega_1|,\dots,|\omega_\done|),(|P_1 \rho|^2,\dots,|P_{\tdone} \rho|^2),\eta) \tilde h_{\eta,\omega}\]
by \eqref{eq:eigenvalues} and \cite[Proposition 1.1]{mller_restriction_1990}; hence, if $\varphi_{\eta,\rho,\omega}(z,u) = \langle \pi_{\eta,\rho}(z,u) \tilde h_{\eta,\omega}, \tilde h_{\eta,\omega} \rangle$ is the corresponding diagonal matrix coefficient of $\pi_{\eta,\rho}$, then
\[\langle \pi_{\eta,\rho}(z,u) \, \pi_{\eta,\rho}(\Kern_{H(\vecL,\vecU)}) \tilde h_{\eta,\omega}, \tilde h_{\eta,\omega} \rangle = m((|\omega_\jone|)_{\jone \leq \done},(|P_\jone \rho|^2)_{\jone\leq \tdone},\eta) \, \varphi_{\eta,\rho,\omega}(z,u).\]
Therefore \eqref{eq:groupinversion} gives that
\begin{multline}\label{eq:quasikernel}
\Kern_{H(\vecL,\vecU)}(z,u) \\
= (2\pi)^{|r|-\dim G} \int_{\dctr} \int_{\ker J_\eta} \sum_{n \in \N^\done} m(n,(|P_\jone \rho|^2)_{\jone\leq\tdone},\eta) 
\, \psi_{\eta,\rho,n}(z,u) \prod_{\jone=1}^\done (b_\jone^\eta)^{r_\jone} \, d\rho \, d\eta,
\end{multline}
where
\[
\psi_{\eta,\rho,n}(z,u) = \sum_{\substack{\omega \in \N^r \\ |\omega_1| = n_1,\dots,|\omega_\done| = n_\done}} \varphi_{\eta,\rho,\omega}(z,u).
\]

On the other hand,
\begin{multline*}
\varphi_{\eta,\rho,\omega}(z,u) = e^{i \langle \eta , u \rangle} e^{i \langle \rho, \bar P^\eta z \rangle} \prod_{\jone=1}^\done \prod_{l=1}^{r_\jone} \Biggl[ (b_\jone^\eta)^{1/2} \\
\times \int_\R e^{i b_\jone^\eta s \bar z^\eta_{\jone,l}} \, h_{\omega_{\jone,l}}((b_\jone^\eta)^{1/2} (s+z^\eta_{\jone,l}/2)) \, h_{\omega_{\jone,l}}((b_\jone^\eta)^{1/2} (s-z^\eta_{\jone,l}/2)) \,ds \Biggr].
\end{multline*}
The last integral is essentially the Fourier-Wigner transform of a pair of Hermite functions, whose bidimensional Fourier transform is a Fourier-Wigner transform too \cite[formula (1.90)]{folland_harmonic_1989}. The parity properties of the Hermite functions then yield
\begin{multline*}
\varphi_{\eta,\rho,\omega}(z,u) = e^{i \langle \eta , u \rangle} e^{i \langle \rho, \bar P^\eta z \rangle} \prod_{\jone=1}^\done  \prod_{l=1}^\jone \Biggl[ \frac{(-1)^{\omega_{\jone,l}}}{\pi \, b_\jone^\eta} \int_{\R \times \R} e^{i \theta_1 z^\eta_{\jone,l}} e^{i \theta_2 \bar z^\eta_{\jone,l}} \\
\times \int_\R e^{it (2\theta_1/ (b_\jone^\eta)^{1/2}) } \, h_{\omega_{\jone,l}}(t+\theta_2/(b_\jone^\eta)^{1/2}) \, h_{\omega_{\jone,l}}(t-\theta_2/(b_\jone^\eta)^{1/2}) \,dt \,d\theta_1 \,d\theta_2 \Biggr].
\end{multline*}
Since the Fourier-Wigner transform of a pair of Hermite functions can be expressed in terms of Laguerre polynomials (see \cite[Theorem 1.104]{folland_harmonic_1989} or \cite[Theorem 1.3.4]{thangavelu_lectures_1993}), we obtain that
\begin{multline*}
\varphi_{\eta,\rho,\omega}(z,u) = \frac{e^{i \langle \eta , u \rangle} e^{i \langle \rho, \bar P^\eta z \rangle}}{\pi^{|r|}} \int_{\R^r \times \R^r}  e^{i \langle \zeta_1, z^\eta \rangle} e^{i \langle \zeta_2, \bar z^\eta \rangle} \\
\times \prod_{\jone=1}^\done \Biggl[ (b_\jone^\eta)^{-r_\jone} \prod_{l=1}^{r_\jone} \Ell_{\omega_{\jone,l}}^{(0)}((\zeta_{1,\jone,l}^2 + \zeta_{2,\jone,l}^2)/b^\eta_\jone) \Biggr] \,d\zeta_1 \,d\zeta_2
\end{multline*}
Consequently, for all $n \in \N^\done$,
\begin{multline}\label{eq:diagonalcoefficients}
\psi_{\eta,\rho,n}(z,u) = \frac{e^{i \langle \eta , u \rangle} e^{i \langle \rho, \bar P^\eta z \rangle}}{\pi^{|r|}} 
\int_{\R^r \times \R^r}  e^{i \langle \zeta_1, z^\eta \rangle} e^{i \langle \zeta_2, \bar z^\eta \rangle} \\
\times \prod_{\jone=1}^\done \Biggl[ (b_\jone^\eta)^{-r_\jone} \, 2^{r_\jone-1} \Ell_{n_\jone}^{(r_\jone-1)}((|\zeta_{1,\jone}|^2 + |\zeta_{2,\jone}|^2)/b^\eta_\jone) \Biggr] \,d\zeta_1 \,d\zeta_2
\end{multline}
\cite[\S 10.12, formula (41)]{erdelyi_higher2_1981}.
The conclusion then follows by plugging \eqref{eq:diagonalcoefficients} into \eqref{eq:quasikernel} and performing a change of variable by rotation in the inner integrals.
\end{proof}

\section{A weighted Plancherel estimate}

Proposition~\ref{prp:kernel} expresses the convolution kernel $\Kern_{H(\vecL,\vecU)}$ as the inverse Fourier transform of a function of the multiplier $H$. Due to the properties of the Fourier transform, it is not unreasonable to think that multiplying the kernel by a polynomial weight might correspond to taking derivatives of the multiplier. As a matter of fact, the presence of the Laguerre expansion leads us to consider both ``discrete'' and ``continuous'' derivatives of the reparametrization $m : \N^\done \times \R^{\tdone} \times \dctr \to \C$ of the multiplier $H$ given by \eqref{eq:multiplier}.

For convenience, set $\Ell_n^{(k)} = 0$ for all  $n < 0$. From the properties of Laguerre polynomials (see, e.g., \cite[\S10.12]{erdelyi_higher2_1981}) one can easily derive the following identities.

\begin{lem}
For all $k,n,m \in \N$ and $t \in \R$,
\begin{gather}
\label{eq:laguerrepm} \Ell_n^{(k)}(t) = \Ell_{n-1}^{(k+1)}(t) + \Ell_n^{(k+1)}(t), \\
\label{eq:laguerred} \frac{d}{dt} \Ell_n^{(k)}(t) = \Ell_{n-1}^{(k+1)}(t) - \Ell_{n}^{(k+1)}(t),\\
\label{eq:laguerreo} \int_{0}^{\infty} \Ell_n^{(k)}(t) \, \Ell_m^{(k)}(t) \, t^k \,dt = \begin{cases}
\frac{(n+k)!}{2^{k+1} n!} &\text{if $n=m$,}\\
0 &\text{otherwise.}
\end{cases}
\end{gather}
\end{lem}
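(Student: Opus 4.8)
The three identities are all classical facts about Laguerre polynomials, so the plan is simply to translate them into the normalization $\Ell_n^{(k)}(t) = (-1)^n e^{-t} L_n^{(k)}(2t)$ used here, starting from the standard formulae collected in Erd\'elyi \cite[\S10.12]{erdelyi_higher2_1981}. For \eqref{eq:laguerrepm} I would start from the contiguous relation $L_n^{(k)}(x) = L_n^{(k+1)}(x) - L_{n-1}^{(k+1)}(x)$; multiplying by $(-1)^n e^{-x/2}$ and setting $x = 2t$ gives $\Ell_n^{(k)}(t) = (-1)^n e^{-t}L_n^{(k+1)}(2t) - (-1)^n e^{-t} L_{n-1}^{(k+1)}(2t)$, and recognizing the two terms as $-\bigl(-\Ell_n^{(k+1)}(t)\bigr)$ and $\Ell_{n-1}^{(k+1)}(t)$ (the sign flips because of the $(-1)^{n}$ versus $(-1)^{n-1}$) yields the claimed sum; the convention $\Ell_{n}^{(k)} = 0$ for $n<0$ covers the boundary case $n=0$.

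For \eqref{eq:laguerred} I would combine $\frac{d}{dx} L_n^{(k)}(x) = -L_{n-1}^{(k+1)}(x)$ with the identity just used. Differentiating $\Ell_n^{(k)}(t) = (-1)^n e^{-t} L_n^{(k)}(2t)$ in $t$ produces $(-1)^n e^{-t}\bigl(-L_n^{(k)}(2t) + 2 (L_n^{(k)})'(2t)\bigr) = (-1)^n e^{-t}\bigl(-L_n^{(k)}(2t) - 2 L_{n-1}^{(k+1)}(2t)\bigr)$; then rewriting $-L_n^{(k)}(2t)$ via the contiguous relation as $L_{n-1}^{(k+1)}(2t) - L_n^{(k+1)}(2t)$ collapses everything to $(-1)^n e^{-t}\bigl(-L_{n-1}^{(k+1)}(2t) - L_n^{(k+1)}(2t)\bigr)$, which after matching signs is exactly $\Ell_{n-1}^{(k+1)}(t) - \Ell_n^{(k+1)}(t)$. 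Again $n=0$ is handled by the zero convention.

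For the orthogonality relation \eqref{eq:laguerreo} I would use the classical $\int_0^\infty e^{-x} L_n^{(k)}(x) L_m^{(k)}(x)\, x^k\, dx = \delta_{nm}\,\Gamma(n+k+1)/n!$. Substituting $x = 2t$, so $dx = 2\,dt$ and $x^k = 2^k t^k$, and using $e^{-x} = e^{-2t} = (e^{-t})^2$ while $L_n^{(k)}(x) L_m^{(k)}(x) = (-1)^{n+m}\Ell_n^{(k)}(t)\Ell_m^{(k)}(t) e^{2t}$ on the support of the integrand, the two exponential factors cancel and I am left with $2^{k+1}\int_0^\infty \Ell_n^{(k)}(t)\Ell_m^{(k)}(t)\, t^k\, dt = \delta_{nm}\, (n+k)!/n!$ (the sign $(-1)^{n+m}$ being $+1$ when $n=m$), which is the stated formula after dividing by $2^{k+1}$.

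None of these steps is a genuine obstacle; the only thing to be careful about is bookkeeping of the sign $(-1)^n$ and the factor-of-$2$ rescaling $x \mapsto 2t$, which is precisely why the lemma is stated separately — it isolates the normalization conventions so that later sections can differentiate and integrate the Laguerre expansion in \eqref{eq:kernel} cleanly. In the write-up I would simply cite the three classical $L_n^{(k)}$ identities from \cite[\S10.12]{erdelyi_higher2_1981} and present the substitution, leaving the (entirely routine) sign-checking to the reader.
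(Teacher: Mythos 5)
Your proposal is correct and follows exactly the route the paper intends: the paper gives no proof at all, merely remarking that the identities follow from the classical Laguerre formulae in Erd\'elyi \cite[\S10.12]{erdelyi_higher2_1981}, and your substitution $x = 2t$ together with the sign bookkeeping for $(-1)^n$ is precisely the routine verification being left to the reader. All three computations check out.
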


Let $e_1,\dots,e_\done$ denote the standard basis of $\R^\done$. We introduce some operators on functions $f : \N^\done \times \R^{\tdone} \times \dctr \to \C$:
\begin{align*}
\tau_\jone f(n,\mu,\eta) &= f(n+e_\jone,\mu,\eta), \\
\delta_\jone f(n,\mu,\eta) &= f(n+e_\jone,\mu,\eta) - f(n,\mu,\eta), \\
\partial_{\mu_l} f(n,\mu,\eta) &= \frac{\partial}{\partial \mu_l} f(n,\mu,\eta), \\
\partial_{\eta_\jtwo} f(n,\mu,\eta) &= \frac{\partial}{\partial \eta_\jtwo} f(n,\mu,\eta)
\end{align*}
for all $\jone \in \{1,\dots,\done\}$, $l \in \{1,\dots,\tdone\}$, $\jtwo \in \{1,\dots,\dtwo\}$.

For all $h \in \N$ and all multiindices $\alpha \in \N^h$, we denote by $|\alpha|$ the length $\alpha_1+\dots+\alpha_h$ of $\alpha$. Inequalities between multiindices, such as $\alpha \leq \alpha'$, shall be interpreted componentwise. Set moreover $(\alpha)_+ = ((\alpha_1)_+,\dots,(\alpha_h)_+)$, where $(\ell)_+ = \max\{\ell,0\}$.

A function $\Psi : \dctr \times \fst \to \C$ will be called \emph{multihomogeneous} if there exist $h_0,h_1,\dots,h_\done \in \R$ such that
\[\Psi\Biggl(\lambda_0 \eta,\sum_{\jone=1}^\done \lambda_\jone P_\jone \xi\Biggr) = \lambda_0^{h_0} \lambda_1^{h_1} \dots \lambda_\done^{h_\done} \Psi(\eta,\xi)\]
for all $\eta \in \dctr$, $\xi \in \fst$, $\lambda_0,\lambda_1,\dots,\lambda_\done \in \leftopenint 0,\infty \rightopenint$; the degrees of homogeneity $h_0,h_1,\dots,h_\done$ of $\Psi$ will also be denoted as $\deg_\ctr \Psi, \deg_{\fst_1} \Psi, \dots, \deg_{\fst_\done} \Psi$. Note that, if $\Psi$ is multihomogeneous and continuous, then $\deg_{\fst_\jone} \Psi \geq 0$ for all $\jone \in \{1,\dots,\done\}$.

\begin{prp}\label{prp:weightedkernel}
Let $H : \R^\done \times \ctr^* \to \C$ be smooth and compactly supported in $\R^\done \times \dctr$, and let $m(n,\mu,\eta)$ be defined by \eqref{eq:multiplier}.
For all $\alpha \in \N^\dtwo$,
\begin{multline*}
u^\alpha \, \Kern_{H(\vecL,\vecU)}(z,u) = \sum_{\iota \in I_\alpha} \int_{\dctr} \int_{\fst} \sum_{n \in \N^\done} \partial_\eta^{\gamma^\iota} \partial_\mu^{\theta^\iota} \delta^{\beta^\iota} m(n,(|\bar P^\eta_\jone \xi|^2)_{\jone\leq\tdone},\eta) \\
\times \Psi_\iota(\eta,\xi) \, \Biggl[ \prod_{\jone=1}^\done \Ell^{(r_\jone-1 + \beta^\iota_\jone)}_{n_\jone}(|P^\eta_\jone \xi|^2/b^\eta_\jone) \Biggr] \, e^{i\langle \xi,z \rangle} \, e^{i\langle \eta,u\rangle} \,d\xi \,d\eta,
\end{multline*}
for almost all $(z,u) \in G$, where $I_\alpha$ is a finite set and, for all $\iota \in I_\alpha$,
\begin{itemize}
\item $\gamma^\iota \in \N^\dtwo$, $\theta^\iota \in \N^{\tdone}$, $\beta^\iota \in \N^\done$, $\gamma^\iota \leq \alpha$,
\item $\Psi_\iota = \Psi_{\iota,0} \Psi_{\iota,1} \dots \Psi_{\iota,\done}$, where $\Psi_{\iota,\jone} : \dctr \times \fst \to \C$ is smooth and multihomogeneous for all $\jone \in \{0,\dots,\done\}$,
\item $\deg_\ctr \Psi_\iota = |\gamma^\iota| - |\alpha| - |\beta^\iota|$ and $\deg_{\fst_\jone} \Psi_\iota = 2\beta^\iota_\jone + 2\theta^\iota_\jone$ for all $\jone \in \{1,\dots,\done\}$,
\item for all $\jone \in \{1,\dots,\done\}$, $\Psi_{\iota,\jone}(\eta,\xi)$ is a product of factors of the form $|P_\jone^\eta \xi|^2$ or $\partial_{\eta_\jtwo} |P^\eta_\jone \xi|^2$ for $\jtwo \in \{1,\dots,\dtwo\}$,
\item $|\gamma^\iota| + |\theta^\iota| + |\beta^\iota| + \sum_{\jone=1}^{\done} (\beta^\iota_\jone - (\deg_{\fst_\jone} \Psi_{\iota,\jone}) /2)_+ \leq |\alpha|$.
\end{itemize}
\end{prp}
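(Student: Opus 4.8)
The plan is to prove the identity by induction on $|\alpha|$, the base case $\alpha=0$ being the formula \eqref{eq:kernel} of Proposition~\ref{prp:kernel} itself (which applies, a function smooth and compactly supported in $\R^\done\times\dctr$ being Schwartz on $\R^\done\times\ctr^*$): one takes $I_0$ a singleton, $\gamma^\iota=\theta^\iota=\beta^\iota=0$ and $\Psi_\iota=\Psi_{\iota,0}$ the constant $2^{|r|}(2\pi)^{-\dim G}$, with the $\Psi_{\iota,\jone}$ ($\jone\geq1$) empty products, so that all five bullet points hold trivially. Throughout, since $H$ is compactly supported and $b^\eta_\jone$ is bounded below on the compact support in $\eta$ of $m$, for each $(\mu,\eta)$ only finitely many $n$ contribute (uniformly), and combined with the exponential decay of the $\Ell_n^{(k)}$ this makes all the integrals absolutely convergent and justifies the manipulations (differentiation under the integral sign, reindexing the $n$-summation, integration by parts) used below.

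For the inductive step, to pass from $\alpha$ to $\alpha+e_\jtwo$ (with $e_\jtwo$ the $\jtwo$-th unit vector) I would write $u_\jtwo\,e^{i\langle\eta,u\rangle}=-i\,\partial_{\eta_\jtwo}e^{i\langle\eta,u\rangle}$ and integrate by parts in $\eta$ over $\dctr$; no boundary term arises, each integrand vanishing to infinite order near $\partial\dctr=\{0\}$ and at infinity. Thus multiplying a term of the $\alpha$-expansion by $u_\jtwo$ amounts to letting $-i\,\partial_{\eta_\jtwo}$ fall, by the Leibniz rule, on its three groups of $\eta$-dependent factors. (i) On the multiplier factor $\partial_\eta^{\gamma}\partial_\mu^{\theta}\delta^{\beta}m$, evaluated at $\mu_\jone=|\bar P^\eta_\jone\xi|^2$: the chain rule through the substitution $\mu_\jone=|\bar P^\eta_\jone\xi|^2$ yields either $\partial_\eta^{\gamma+e_\jtwo}\partial_\mu^{\theta}\delta^{\beta}m$ — the derivative of $m$ in its third formal argument, which silently carries the $\eta$-dependence of the slots through the $b^\eta_\jone$, so that no bare factor $2n_\jone+r_\jone$ is exposed — or, for each $\jone\leq\tdone$, the product of $\partial_\eta^{\gamma}\partial_\mu^{\theta+e_\jone}\delta^{\beta}m$ with $\partial_{\eta_\jtwo}|\bar P^\eta_\jone\xi|^2=-\partial_{\eta_\jtwo}|P^\eta_\jone\xi|^2$, a new factor for $\Psi_{\iota,\jone}$. (ii) On $\Psi_\iota$: the product rule either differentiates $\Psi_{\iota,0}$ or replaces one factor of some $\Psi_{\iota,\jone}$ by its $\eta_\jtwo$-derivative, keeping all $\fst$-degrees and the number of factors of each $\Psi_{\iota,\jone}$ unchanged and lowering the center degree by one. (iii) On the $\jone$-th Laguerre factor $\Ell^{(r_\jone-1+\beta_\jone)}_{n_\jone}(|P^\eta_\jone\xi|^2/b^\eta_\jone)$: one gets $(\Ell^{(r_\jone-1+\beta_\jone)}_{n_\jone})'(|P^\eta_\jone\xi|^2/b^\eta_\jone)\cdot\partial_{\eta_\jtwo}(|P^\eta_\jone\xi|^2/b^\eta_\jone)$, and by \eqref{eq:laguerred} the derivative equals $\Ell^{(r_\jone+\beta_\jone)}_{n_\jone-1}-\Ell^{(r_\jone+\beta_\jone)}_{n_\jone}$, so the Laguerre type rises by one ($\beta_\jone\mapsto\beta_\jone+1$) and, after shifting the $n_\jone$-summation index in the first term and using $\Ell^{(k)}_n=0$ for $n<0$, the two terms combine to turn $\partial_\eta^{\gamma}\partial_\mu^{\theta}\delta^{\beta}m$ into $\partial_\eta^{\gamma}\partial_\mu^{\theta}\delta^{\beta+e_\jone}m$; at the same time $\partial_{\eta_\jtwo}(|P^\eta_\jone\xi|^2/b^\eta_\jone)=(\partial_{\eta_\jtwo}|P^\eta_\jone\xi|^2)/b^\eta_\jone-|P^\eta_\jone\xi|^2(\partial_{\eta_\jtwo}b^\eta_\jone)/(b^\eta_\jone)^2$ contributes either a factor $\partial_{\eta_\jtwo}|P^\eta_\jone\xi|^2$ to $\Psi_{\iota,\jone}$ and $1/b^\eta_\jone$ to $\Psi_{\iota,0}$, or a factor $|P^\eta_\jone\xi|^2$ to $\Psi_{\iota,\jone}$ and $-(\partial_{\eta_\jtwo}b^\eta_\jone)/(b^\eta_\jone)^2$ to $\Psi_{\iota,0}$.

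One then checks, case by case, that each of these elementary moves preserves the five bullet points with $|\alpha|$ replaced by $|\alpha|+1$. Only $\gamma$ (in case (i)) ever grows, and only by $e_\jtwo$, so $\gamma^\iota\leq\alpha+e_\jtwo$. In every case the change of $\deg_\ctr\Psi_\iota$ equals the change of $|\gamma^\iota|-|\alpha|-|\beta^\iota|$: cases (i, $\mu$-substitution) and (ii) add one factor of center degree $-1$ while $|\alpha|$ grows by one; case (i, $\gamma$-bump) adds no factor while $|\gamma|$ and $|\alpha|$ both grow by one; case (iii) adds factors of total center degree $-2$ while $|\alpha|$ and $|\beta|$ both grow by one — so $\deg_\ctr\Psi_\iota=|\gamma^\iota|-|\alpha|-|\beta^\iota|$ persists. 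The relation $\deg_{\fst_\jone}\Psi_\iota=2\beta^\iota_\jone+2\theta^\iota_\jone$ persists because every unit increase of $\theta^\iota_\jone$ or of $\beta^\iota_\jone$ comes paired with exactly one new factor of $\fst_\jone$-degree $2$ ($-\partial_{\eta_\jtwo}|P^\eta_\jone\xi|^2$ in case (i), $\partial_{\eta_\jtwo}|P^\eta_\jone\xi|^2$ or $|P^\eta_\jone\xi|^2$ in case (iii)), all the factors of $\Psi_{\iota,\jone}$ having $\fst_\jone$-degree $2$; consequently the number of factors of $\Psi_{\iota,\jone}$ stays equal to $\beta^\iota_\jone+\theta^\iota_\jone$, the term $(\beta^\iota_\jone-(\deg_{\fst_\jone}\Psi_{\iota,\jone})/2)_+$ stays $0$, and the budget inequality reduces to $|\gamma^\iota|+|\theta^\iota|+|\beta^\iota|\leq|\alpha|$, which holds since each move raises the left-hand side by at most one. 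Collecting the finitely many new terms (splitting, if needed, into multihomogeneous pieces to keep $I_{\alpha+e_\jtwo}$ finite) produces the formula for $\alpha+e_\jtwo$.

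The main obstacle I foresee is not any single step but organising the aggregate bookkeeping so that these three balance identities hold simultaneously across all cases; the crux is that the Laguerre type-bump forced by \eqref{eq:laguerred} is inseparable from the factor $\partial_{\eta_\jtwo}(|P^\eta_\jone\xi|^2/b^\eta_\jone)$, which is precisely what keeps $\deg_{\fst_\jone}\Psi_\iota$ and the budget in line. Two auxiliary facts are needed but should be routine: that $b^\eta_\jone$, $|P^\eta_\jone\xi|^2$ and all their $\eta$-derivatives are smooth multihomogeneous functions on $\dctr\times\fst$ with the advertised degrees — which follows from Assumption~\AssA\ via $(b^\eta_\jone)^2=-(2r_\jone)^{-1}\tr(J^2_\eta P_\jone)$, a positive homogeneous quadratic polynomial on $\dctr$, together with $P^\eta_\jone=-(b^\eta_\jone)^{-2}J^2_\eta P_\jone$ — and that iterated differentiation keeps $\Psi_{\iota,0}$ and the $\Psi_{\iota,\jone}$ products of such functions. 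I note, finally, that the organisation sketched above produces inside $\Psi_{\iota,\jone}$ products of $\eta$-derivatives of $|P^\eta_\jone\xi|^2$ of arbitrary order, rather than only of first order as literally stated; since every such factor still has $\fst_\jone$-degree $2$ and $\fst_{\jone'}$-degree $0$ for $\jone'\neq\jone$, the degree relations — which are all that the subsequent weighted Plancherel estimate uses — come out exactly as claimed, so this does not affect the applications (a finer bookkeeping keeping only first-order derivatives may of course be possible).
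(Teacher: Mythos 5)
Your argument follows the same route as the paper's: reduce via Proposition~\ref{prp:kernel} and integration by parts in $\eta$ to computing $(\partial/\partial\eta)^\alpha$ of the sum in \eqref{eq:kernel}, then induct on $|\alpha|$ using Leibniz' rule, the Laguerre identity \eqref{eq:laguerred} together with summation by parts to trade an $\eta$-derivative of a Laguerre factor for a raised type and a discrete difference $\delta^{\beta+e_\jone}$, and the smoothness and homogeneity of $b^\eta_\jone$ and $P^\eta_\jone$ on $\dctr$. The base case, the three cases of the inductive step, and the balance of $\deg_\ctr\Psi_\iota$ and $\deg_{\fst_\jone}\Psi_\iota$ against $\gamma^\iota,\theta^\iota,\beta^\iota$ are all correct and coincide with the intended proof.

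The gap is in your final remark. You allow $\Psi_{\iota,\jone}$ to contain $\eta$-derivatives of $|P^\eta_\jone\xi|^2$ of arbitrary order (arising when a later $\partial_{\eta_{\jtwo'}}$ hits an existing factor $\partial_{\eta_\jtwo}|P^\eta_\jone\xi|^2$) and assert that only the homogeneity degrees matter for the subsequent weighted Plancherel estimate. That is not so: the proof of Proposition~\ref{prp:generalweightedl2} opens with the pointwise bound $\partial_{\eta_\jtwo}|P^\eta_\jone\xi|^2=2\langle(\partial_{\eta_\jtwo}P^\eta_\jone)P_\jone\xi,P^\eta_\jone\xi\rangle\leq C|\eta|^{-1}|P^\eta_\jone\xi|\,|P_\jone\xi|$, i.e., each factor of $\Psi_{\iota,\jone}$ must carry half of its $\fst_\jone$-degree as a power of $|P^\eta_\jone\xi|$; this is precisely what restricts the exponents to $h_\jone\geq(\deg_{\fst_\jone}\Psi_{\iota,\jone})/2$ and makes the fifth bullet usable. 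A second-order factor fails this structural bound: writing $Q=P^\eta_\jone$ and using $Q^2=Q$, the quantity $\partial_{\eta_{\jtwo'}}\partial_{\eta_\jtwo}|Q\xi|^2$ contains the term $2\langle Q(\partial_{\eta_\jtwo}Q)\bar P^\eta_\jone\xi,\,Q(\partial_{\eta_{\jtwo'}}Q)\bar P^\eta_\jone\xi\rangle$, which need not vanish when $P^\eta_\jone\xi=0$. The fix --- and the very reason the fifth bullet carries the term $\sum_\jone(\beta^\iota_\jone-(\deg_{\fst_\jone}\Psi_{\iota,\jone})/2)_+$, which in your accounting is identically zero --- is to reassign any such higher-order factor to $\Psi_{\iota,0}$ (it is still smooth and multihomogeneous, of $\fst_\jone$-degree $2$ and center degree lowered by one): this lowers $\deg_{\fst_\jone}\Psi_{\iota,\jone}$ by $2$, hence raises $(\beta^\iota_\jone-(\deg_{\fst_\jone}\Psi_{\iota,\jone})/2)_+$ by at most $1$, and that increase is exactly paid for by the unit increase of $|\alpha|$. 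With this single amendment your induction proves the statement as written.
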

\begin{proof}
By Proposition~\ref{prp:kernel} and the properties of the Fourier transform, we are reduced to proving that, for all $\alpha \in \N^{\dtwo}$, $\eta \in \dctr$, $\xi \in \fst$,
\begin{multline*}
\left(\frac{\partial}{\partial \eta}\right)^\alpha \sum_{n \in \N^\done} m(n,(|\bar P^\eta_\jone \xi|^2)_{\jone\leq\tdone},\eta) \, \prod_{\jone=1}^\done \Ell_{n_\jone}^{(r_\jone-1)}(|P^\eta_\jone \xi|^2 /b^\eta_\jone) \\
= \sum_{\iota \in I_\alpha} \sum_{n \in \N^\done} \partial_\eta^{\gamma^\iota} \partial_\mu^{\theta^\iota} \delta^{\beta^\iota} m(n,(|\bar P^\eta_\jone \xi|^2)_{\jone\leq\tdone},\eta)
\, \Psi_\iota(\eta,\xi) 
\, \prod_{\jone=1}^\done \Ell^{(r_\jone-1 + \beta^\iota_\jone)}_{n_\jone}(|P^\eta_\jone \xi|^2/b^\eta_\jone),
\end{multline*}
where $I_\alpha$, $\gamma^\iota$, $\theta^\iota$, $\beta^\iota$, $\Psi_\iota$ are as in the above statement.

This is easily proved by induction on $|\alpha|$. For $|\alpha| = 0$ it is trivially verified. For the inductive step, one applies Leibniz' rule, and exploits the following observations:
\begin{itemize}
\item when a derivative $\partial_{\eta_\jtwo}$ hits a Laguerre function, by the identity \eqref{eq:laguerred} and summation by parts, the type of the Laguerre function is increased by $1$, as well as the corresponding component of $\beta^\iota$;
\item for all $\jone \in \{1,\dots,\done\}$, $b^\eta_\jone = \sqrt{\tr(-J_\eta^2 P_\jone)/(2r_\jone)}$ is a smooth function of $\eta \in \dctr$, homogeneous of degree $1$;
\item for all $\jone \in \{1,\dots,\done\}$, $P^\eta_\jone = -J_\eta^2 P_j/(b^\eta_\jone)^2$ is a smooth function of $\eta \in \dctr$, homogeneous of degree $0$, and in fact it is constant if $\jone > \tdone$;
\item for all $\jone \in \{1,\dots,\tdone\}$, $|P^\eta_\jone \xi|^2 = \langle P^\eta_\jone P_\jone \xi, P_\jone \xi \rangle$ is a smooth bihomogeneous function of $(\eta,P_\jone\xi) \in \dctr \times \fst_\jone$ of bidegree $(0,2)$, and moreover
\begin{gather*}
|\bar P^\eta_\jone \xi|^2 = |P_\jone \xi|^2 - |P^\eta_\jone \xi|^2, \qquad \partial_{\eta_\jtwo} |\bar P^\eta_\jone \xi|^2 = - \partial_{\eta_\jtwo} |P^\eta_\jone \xi|^2, \\
\partial_{\eta_\jtwo} (|P^\eta_\jone \xi|^2/b^\eta_\jone) = |P^\eta_\jone \xi|^2 \partial_{\eta_\jtwo} (1/b^\eta_\jone) + (\partial_{\eta_\jtwo} |P^\eta_\jone \xi|^2)/b^\eta_\jone
\end{gather*}
for all $\jtwo \in \{1,\dots,\dtwo\}$.
\end{itemize}
The conclusion follows.
\end{proof}

Note that, for all $\jone \in \{1,\dots,\done\}$, $\mu \in \R^{\tdone}$, $\eta \in \dctr$, the quantities $\tau_\jone f(\cdot,\mu,\eta)$, $\delta_\jone f(\cdot,\mu,\eta)$ depend only on $f(\cdot,\mu,\eta)$; in other words, $\tau_\jone$ and $\delta_\jone$ can be considered as operators on functions $\N^\done \to \C$.

The following lemma exploits the orthogonality properties \eqref{eq:laguerreo} of the Laguerre functions, together with \eqref{eq:laguerrepm}, and shows that a mismatch between the type of the Laguerre function and the exponent of the weight attached to the measure may be turned in some cases into discrete differentiation.

\begin{lem}\label{lem:laguerreorthogonality}
For all $h,k \in \N^\done$ and all compactly supported $f : \N^\done \to \C$,
\begin{multline*}
\int_{\leftopenint 0,\infty\rightopenint^\done} \Bigl| \sum_{n \in \N^\done} f(n) \, \prod_{\jone=1}^\done \Ell_{n_\jone}^{(k_\jone)}(t_\jone) \Bigr|^2 \,t^h \,dt \\
\leq C_{h,k} \sum_{n \in \N^\done} |\delta^{(k-h)_+} f(n)|^2 \, \prod_{\jone=1}^\done (1+ n_\jone)^{h_\jone+2(k_\jone-h_\jone)_+}.
\end{multline*}
\end{lem}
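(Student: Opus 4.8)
The plan is to reduce the multidimensional estimate to a one-dimensional one by Fubini/tensorization, and then prove the one-dimensional case by combining the orthogonality relation \eqref{eq:laguerreo} with the splitting \eqref{eq:laguerrepm}. More precisely, since $\prod_{\jone=1}^\done \Ell_{n_\jone}^{(k_\jone)}(t_\jone)$ factors over $\jone$ and the weight $t^h = \prod_\jone t_\jone^{h_\jone}$ does too, I would first argue that it suffices to treat the case $\done = 1$, absorbing the remaining factors by iterating the one-dimensional estimate in each variable (treating $f(n_1,\dots,n_\done)$ as a function of a single index with the others frozen, estimating, then summing). The constant $C_{h,k}$ will be the product of the one-dimensional constants. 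So the heart of the matter is: for $k,h\in\N$ and finitely supported $f:\N\to\C$,
\[
\int_0^\infty \Bigl| \sum_{n\in\N} f(n)\,\Ell_n^{(k)}(t)\Bigr|^2 t^h\,dt \leq C_{h,k} \sum_{n\in\N} |\delta^{(k-h)_+} f(n)|^2 (1+n)^{h+2(k-h)_+}.
\]

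To prove the one-dimensional case, I distinguish $h \geq k$ and $h < k$. When $h \geq k$, the idea is to integrate against the natural weight $t^k$ (for which \eqref{eq:laguerreo} gives orthogonality) and absorb the extra factor $t^{h-k}$ into the coefficients: since $t^{h-k}$ is a nonnegative power, one can hope to dominate $\int_0^\infty |\sum_n f(n)\Ell_n^{(k)}(t)|^2 t^h\,dt$ using a weighted Plancherel-type bound for Laguerre expansions of type $k$ where multiplication by $t$ corresponds to a three-term recurrence (a tridiagonal operator), so that $\int |g(t)|^2 t^h t^k\,dt$ is controlled by $\sum_n |g_n|^2 (1+n)^h$ where $g_n$ are the Laguerre coefficients of $g$. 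Concretely I would use the recurrence $t\,\Ell_n^{(k)}(t) = $ (linear combination of $\Ell_{n-1}^{(k)},\Ell_n^{(k)},\Ell_{n+1}^{(k)}$ with coefficients $O(n)$), iterate it $h-k$... no — rather iterate it $h$ times starting from $t^0$? The cleaner route: expand $t^h\Ell_n^{(k)}(t)\,t^k$ is not a finite Laguerre sum, so instead write $\int_0^\infty |\sum_n f(n)\Ell_n^{(k)}(t)|^2 t^h dt = \int_0^\infty |\sum_n f(n)\Ell_n^{(k)}(t)|^2 t^{h-k}\cdot t^k dt$ and bound $t^{h-k}$ pointwise by a polynomial that, via the recurrence applied to each $\Ell_n^{(k)}$, turns into a band-limited reshuffling of coefficients with $O((1+n)^{h-k})$ weights; combined with \eqref{eq:laguerreo} this yields the bound with $\delta^0 f = f$ and weight $(1+n)^h$, matching $(k-h)_+ = 0$. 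When $h < k$, set $p = k - h = (k-h)_+$. Here the weight $t^h$ is \emph{smaller} than the natural $t^k$, and the estimate cannot hold with $f$ itself — this is exactly where \eqref{eq:laguerrepm} enters. Applying $\Ell_n^{(k)} = \Ell_{n-1}^{(k-1)} + \Ell_n^{(k-1)}$ repeatedly $p$ times rewrites $\sum_n f(n)\Ell_n^{(k)}(t) = \sum_n (\tilde\delta^p f)(n)\,\Ell_n^{(h)}(t)$ for a suitable combination $\tilde\delta^p f$ of shifts of $f$ — indeed each application of \eqref{eq:laguerrepm} followed by summation by parts produces a difference operator $\delta$, so after $p$ steps one gets coefficients $\delta^p f$ (up to reindexing and lower-order tails, which I would need to check vanish or are controlled). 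Then one is back in the $h \geq k$ regime (now with type $h$ equal to the weight exponent, so orthogonality \eqref{eq:laguerreo} applies directly), giving $\sum_n |\delta^p f(n)|^2 (1+n)^h$; the extra factor $(1+n)^{2(k-h)_+} = (1+n)^{2p}$ would come from the crude bounds $\frac{(n+h)!}{n!} \leq C(1+n)^h$ used along the way.

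The main obstacle I anticipate is bookkeeping the boundary/tail terms in the summation-by-parts steps and making the passage from type $k$ to type $h$ via \eqref{eq:laguerrepm} precise: each application of $\Ell_n^{(k)} = \Ell_{n-1}^{(k+1)} + \Ell_n^{(k+1)}$ (read in the decreasing-type direction) must be inverted, and one must verify that the finitely-supported hypothesis on $f$ guarantees all the shifted/differenced sequences remain finitely supported with no contribution escaping at infinity, and that the combinatorial coefficient in front of $\delta^p f$ is exactly right (it should be a binomial-type coefficient, harmless for the final inequality since we only need an upper bound). A secondary technical point is justifying the polynomial bound $t^{h-k} \lesssim$ (band-operator in Laguerre coefficients) cleanly — the honest way is to use the three-term recurrence for $t\Ell_n^{(k)}$, which I would state as a consequence of the standard Laguerre recurrences referenced in \cite{erdelyi_higher2_1981}, and note that iterating it $j$ times expresses $t^j \Ell_n^{(k)}$ as a sum of $\Ell_m^{(k)}$ with $|m-n|\leq j$ and coefficients $O((1+n)^j)$, so that $\int |\sum_n f(n) t^j \Ell_n^{(k)}(t)|^2 t^k dt$, after regrouping and applying Cauchy–Schwarz over the $O(1)$-many shifts, is $\leq C \sum_n |f(n)|^2 (1+n)^{2j}\frac{(n+k)!}{n!}\lesssim \sum_n |f(n)|^2(1+n)^{2j+k}$; choosing $j=h-k$ in the first regime gives exactly the claimed $(1+n)^h$ up to relabeling. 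Once the one-dimensional inequality is in hand, the tensorization to general $\done$ is routine: apply it in the variable $t_\done$ with $f(\cdot) = f(n_1,\dots,n_{\done-1},\cdot)$, integrate the resulting bound over $t_1,\dots,t_{\done-1}$, and induct, noting that $\delta_\done$ commutes with the operators $\delta_\jone$ ($\jone<\done$) so the iterated difference $\delta^{(k-h)_+}$ assembles correctly.
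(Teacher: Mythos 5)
Your overall architecture---reduce to $\done=1$ by iterating the one-dimensional estimate in each variable, then split into the regimes $h\geq k$ and $h<k$, using \eqref{eq:laguerrepm} to convert the type of the Laguerre functions in the second regime---matches the paper's, and the tensorization step is fine. However, both one-dimensional regimes have problems as written. In the regime $h\geq k$ your arithmetic does not close: you bound $\int_0^\infty|\sum_n f(n)\,t^j\,\Ell_n^{(k)}(t)|^2\,t^k\,dt$ with $j=h-k$, but that integral equals $\int_0^\infty|\sum_n f(n)\,\Ell_n^{(k)}(t)|^2\,t^{2j+k}\,dt$, i.e.\ the weight $t^{2h-k}$ rather than $t^h$, and your own final exponent $2j+k=2h-k$ exceeds the claimed $h$ whenever $h>k$. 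The recurrence route can be repaired (treat $t^{h-k}|g|^2$ as the bilinear form $(t^{h-k}g)\bar g$, apply the band operator to one factor only, and use that $(1+n)\sim(1+m)$ on the band $|n-m|\leq h-k$ together with $|f(n)||f(m)|\leq\frac12(|f(n)|^2+|f(m)|^2)$), but the paper sidesteps all of this by using \eqref{eq:laguerrepm} in this regime as well: $\sum_n f(n)\,\Ell_n^{(k)}=\sum_n(1+\tau)^{h-k}f(n)\,\Ell_n^{(h)}$, and for $h\geq k$ the operator $(1+\tau)^{h-k}$ is a finite sum of shifts, so orthogonality at type $h$ plus the triangle inequality finishes immediately.

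In the regime $h<k$, which is where the content of the lemma lies, you have correctly identified the mechanism (invert $1+\tau$ to lower the type from $k$ to $h$, extracting $\delta^{k-h}$), but the decisive estimate is missing. The paper writes $(1+\tau)^{h-k}=(-\delta)^{k-h}(1-\tau^2)^{h-k}$ and expands $(1-\tau^2)^{h-k}$ into a Neumann-type series whose coefficients $\binom{\ell+k-h-1}{\ell}$ grow like $\langle\ell\rangle^{k-h-1}$; controlling $\sum_n|(1+\tau)^{h-k}f(n)|^2\langle n\rangle^h$ then requires a Cauchy--Schwarz argument on the tail sums $\sum_{\ell\geq n}\langle\ell\rangle^{k-h-1}|\delta^{k-h}f(\ell)|$, and it is precisely this step that produces the loss $(1+n)^{2(k-h)}$. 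Your attribution of that factor to the bound $\frac{(n+h)!}{n!}\leq C(1+n)^h$ is incorrect---that bound only accounts for the exponent $h$ in \eqref{eq:laguerreo}---which indicates the key quantitative step has not actually been carried out. A smaller slip: the identity you invoke, $\Ell_n^{(k)}=\Ell_{n-1}^{(k-1)}+\Ell_n^{(k-1)}$, is false (it is \eqref{eq:laguerrepm} with the roles of the types reversed); the correct statement is $\sum_n g(n)\,\Ell_n^{(h)}=\sum_n(1+\tau)^{k-h}g(n)\,\Ell_n^{(k)}$, to be solved for $g=(1+\tau)^{h-k}f$ using that $1+\tau$ is invertible on compactly supported sequences.
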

\begin{proof}
Via an inductive argument, we may reduce to the case $\done = 1$.

Note that, if $f$ is compactly supported, then $\tau^l f$ is null for all sufficiently large $l \in \N$. Hence the operator $1+\tau$, when restricted to the set of compactly supported functions, is invertible, with inverse given by
\[
(1+\tau)^{-1} f = \sum_{l \in \N} (-1)^l \tau^l f.
\]
Then by \eqref{eq:laguerrepm} we deduce that, for all $k \in \N$,
\begin{align*}
\sum_{n \in \N} f(n) \, \Ell_n^{(k)}(t) &= \sum_{n \in \N} (1+\tau) f(n) \, \Ell_n^{(k+1)}(t), \\
\sum_{n \in \N} f(n) \, \Ell_n^{(k+1)}(t) &= \sum_{n \in \N} (1+\tau)^{-1} f(n) \, \Ell_n^{(k)}(t),
\end{align*}
and consequently, for all $h,k \in \N$,
\[
\sum_{n \in \N} f(n) \, \Ell_n^{(k)}(t) = \sum_{n \in \N} (1+\tau)^{h-k} f(n) \, \Ell_n^{(h)}(t)
\]
Thus the orthogonality properties \eqref{eq:laguerreo} of the Laguerre functions give us that
\[
\int_0^\infty \Bigl| \sum_{n \in \N} f(n) \, \Ell_n^{(k)}(t) \Bigr|^2 \,t^h \,dt \leq C_{h,k} \sum_{n \in \N} |(1+\tau)^{h-k} f(n)|^2 \, \langle n \rangle^{h},
\]
where $\langle n \rangle = 1+n$.

In the case $h \geq k$, $(1+\tau)^{h-k}$ is given by the finite sum
\[(1+\tau)^{h-k} = \sum_{\ell = 0}^{h-k} \binom{h-k}{\ell} \tau^\ell,\]
and the conclusion follows immediately by the triangular inequality.

In the case $h < k$, instead, since $\delta = \tau - 1$, from the identity $1-\tau^2 = (1-\tau)(1+\tau)$ we deduce that
\[(1+\tau)^{h-k} = (-\delta)^{k-h} (1-\tau^2)^{h-k} = (-1)^{k-h} \sum_{\ell \geq 0} \binom{\ell + k - h - 1}{\ell} \delta^{k-h} \tau^{2\ell},\]
hence
\[\begin{split}
\sum_{n \in \N} |(1+\tau)^{h-k} f(n)|^2 \, \langle n \rangle^{h} &= \sum_{n \in \N} \Biggl|\sum_{\ell \geq 0} \binom{\ell + k - h - 1}{\ell} \delta^{k-h} f(n+2\ell)\Biggr|^2 \, \langle n \rangle^{h} \\
&\leq C_{h,k} \sum_{n \in \N} \Biggl|\sum_{\ell \geq n} \langle\ell\rangle^{k - h - 1} \delta^{k-h} f(\ell)\Biggr|^2 \, \langle n \rangle^{h} \\
&\leq C_{h,k} \sum_{n \in \N} \langle n \rangle^{-1/2} \sum_{\ell \geq n} |\langle\ell\rangle^{k - h - 1/4} \delta^{k-h} f(\ell)|^2 \, \langle n \rangle^{h} \\
&\leq C_{h,k} \sum_{\ell \in \N} \langle\ell\rangle^{2k - 2h - 1/2} |\delta^{k-h} f(\ell)|^2 \sum_{n = 0}^\ell \langle n \rangle^{h-1/2} \\
&\leq C_{h,k} \sum_{\ell \in \N} \langle\ell\rangle^{2k - h} |\delta^{k-h} f(\ell)|^2,
\end{split}\]
by the Cauchy-Schwarz inequality, and we are done.
\end{proof}

Let $|\cdot|$ denote any Euclidean norm on $\lie{z}^*$. The previous lemma, together with Plancherel's formula for the Fourier transform, yields the following $L^2$ estimate.

\begin{prp}\label{prp:generalweightedl2}
Under the hypotheses of Proposition~\ref{prp:weightedkernel}, for all $\alpha \in \N^\dtwo$,
\begin{multline}\label{eq:weightedl2}
\int_{G} |u^\alpha \, \Kern_{H(\vecL,\vecU)}(z,u)|^2 \,dz\,du 
\leq C_\alpha \sum_{\iota \in \tilde I_\alpha} \int_{\dctr} \int_{\leftclosedint 0,\infty\rightopenint^{\tdone}}  \sum_{n \in \N^\done} |\partial_\eta^{\gamma^\iota} \partial_\mu^{\theta^\iota} \delta^{\beta^\iota} m(n,\mu,\eta)|^2 \\
\times |\eta|^{2|\gamma^\iota|-2|\alpha|-2|\beta^\iota|+|a^\iota|+\done} \, (1+n_1)^{a^\iota_1} \dots (1+n_\done)^{a^\iota_\done} \, d\sigma_\iota(\mu) \,d\eta,
\end{multline}
where $\tilde I_\alpha$ is a finite set and, for all $\iota \in \tilde I_\alpha$,
\begin{itemize}
\item $\gamma^\iota \in \N^\dtwo$, $\theta^\iota \in \N^{\tdone}$, $a^\iota, \beta^\iota \in \N^\done$,
\item $\gamma^\iota \leq \alpha$, $|\gamma^\iota| + |\theta^\iota| + |\beta^\iota| \leq |\alpha|$,
\item $\sigma_\iota$ is a regular Borel measure on $\leftclosedint 0,\infty\rightopenint^{\tdone}$.
\end{itemize}
\end{prp}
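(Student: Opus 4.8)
The plan is to start from the integral expression for $u^\alpha \Kern_{H(\vecL,\vecU)}(z,u)$ furnished by Proposition~\ref{prp:weightedkernel}, take the $L^2$ norm in $(z,u)$, and kill the $z$- and $u$-integrations by Plancherel's theorem, leaving a sum over $\iota \in I_\alpha$ of terms in which only the $\eta$-integration and the Laguerre series survive. Concretely, for fixed $\eta$ the integrand is a finite linear combination (over $\iota \in I_\alpha$) of functions of $\xi$ of the form
\[
\sum_{n \in \N^\done} \partial_\eta^{\gamma^\iota} \partial_\mu^{\theta^\iota} \delta^{\beta^\iota} m(n,(|\bar P^\eta_\jone \xi|^2)_{\jone\le\tdone},\eta)\,\Psi_\iota(\eta,\xi)\,\prod_{\jone=1}^\done \Ell^{(r_\jone-1+\beta^\iota_\jone)}_{n_\jone}(|P^\eta_\jone \xi|^2/b^\eta_\jone),
\]
times the unimodular factor $e^{i\langle\xi,z\rangle}e^{i\langle\eta,u\rangle}$. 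Plancherel in $u$ collapses $e^{i\langle\eta,u\rangle}$ and produces a Dirac-type pairing forcing the $\eta$-variables of the two factors of $|{\cdot}|^2$ to agree; Plancherel in $z$ does the same for $\xi$. After this, up to the constant $(2\pi)^{-\dim G}$ absorbed in $C_\alpha$, one is left with
\[
\int_G |u^\alpha \Kern_{H(\vecL,\vecU)}(z,u)|^2\,dz\,du \le C_\alpha \sum_{\iota\in I_\alpha}\int_{\dctr}\int_{\fst} \Bigl|\sum_{n}\partial_\eta^{\gamma^\iota}\partial_\mu^{\theta^\iota}\delta^{\beta^\iota}m(n,(|\bar P^\eta_\jone\xi|^2)_\jone,\eta)\,\Psi_\iota(\eta,\xi)\prod_\jone\Ell^{(r_\jone-1+\beta^\iota_\jone)}_{n_\jone}(|P^\eta_\jone\xi|^2/b^\eta_\jone)\Bigr|^2 d\xi\,d\eta.
\]

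Next I would disentangle the $\xi$-integration. Split $\xi = \sum_\jone P^\eta_\jone \xi + \bar P^\eta \xi$ according to the orthogonal decomposition $\fst = \bigoplus_\jone \mathrm{range}(P^\eta_\jone) \oplus \ker J_\eta$; on $\mathrm{range}(P^\eta_\jone)$ pass to "polar" coordinates adapted to $b^\eta_\jone$, writing $t_\jone = |P^\eta_\jone\xi|^2/b^\eta_\jone$, so that $d\xi$ restricted to this block becomes (a constant times) $(b^\eta_\jone)^{r_\jone} t_\jone^{r_\jone-1}\,dt_\jone\,d\sigma_{S}$ over the sphere; on $\ker J_\eta$ use the coordinates $\mu_\jone = |\bar P^\eta_\jone\xi|^2$ for $\jone\le\tdone$, yielding a Jacobian that is a product of powers of the $\mu_\jone$ absorbed into a measure $d\sigma_\iota(\mu)$. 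Since each $\Psi_{\iota,\jone}(\eta,\xi)$ is, by Proposition~\ref{prp:weightedkernel}, a product of factors $|P^\eta_\jone\xi|^2$ and $\partial_{\eta_\jtwo}|P^\eta_\jone\xi|^2$, each of which is multihomogeneous of degree $2$ in $P_\jone\xi$ and of some degree in $\eta$, one factors $|\Psi_\iota(\eta,\xi)|^2$ as a product over $\jone$ of $(b^\eta_\jone)^{2\deg_{\fst_\jone}\Psi_{\iota,\jone}}$ times a bounded function on the sphere times $t_\jone^{\deg_{\fst_\jone}\Psi_{\iota,\jone}}$ (using homogeneity of the sphere-variables), while the $\eta$-homogeneities, combined with $b^\eta_\jone \simeq |\eta|$ and $\deg_\ctr\Psi_\iota = |\gamma^\iota|-|\alpha|-|\beta^\iota|$, collect to a net power $|\eta|^{2|\gamma^\iota|-2|\alpha|-2|\beta^\iota|}$ up to further powers of $|\eta|$ coming from the $t$- and $\mu$-Jacobians (which get rolled into the exponent of $|\eta|$ and into $\sigma_\iota$). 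After this reduction the inner integral over the sphere-variables is trivial and one is left with an integral over $t \in \leftopenint 0,\infty\rightopenint^\done$ of $|\sum_n f_\iota(n)\prod_\jone \Ell^{(r_\jone-1+\beta^\iota_\jone)}_{n_\jone}(t_\jone)|^2$ against $t^{h^\iota}\,dt$ for an appropriate $h^\iota$, with $f_\iota(n) = \partial_\eta^{\gamma^\iota}\partial_\mu^{\theta^\iota}\delta^{\beta^\iota}m(n,\mu,\eta)$ (the remaining $\mu$- and $\eta$-dependence being spectators).

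At this point Lemma~\ref{lem:laguerreorthogonality} applies verbatim with $k_\jone = r_\jone-1+\beta^\iota_\jone$ and $h_\jone = h^\iota_\jone$: it converts the Laguerre $L^2$ norm into $\sum_n |\delta^{(k-h)_+}f_\iota(n)|^2 \prod_\jone (1+n_\jone)^{h^\iota_\jone + 2(k_\jone-h^\iota_\jone)_+}$. Here $(k-h)_+$ adds at most $\sum_\jone(\beta^\iota_\jone - \text{something})_+$ extra discrete differences to the already present $\delta^{\beta^\iota}$, and the final bullet of Proposition~\ref{prp:weightedkernel}, namely $|\gamma^\iota|+|\theta^\iota|+|\beta^\iota|+\sum_\jone(\beta^\iota_\jone-(\deg_{\fst_\jone}\Psi_{\iota,\jone})/2)_+ \le |\alpha|$, is exactly what guarantees that the total order of differentiation in the new index set $\tilde I_\alpha$ still satisfies $|\gamma^\iota|+|\theta^\iota|+|\beta^\iota|\le|\alpha|$ after renaming $\beta^\iota := \beta^\iota + (k-h)_+$; the exponents $a^\iota_\jone$ are read off from $h^\iota_\jone + 2(k_\jone - h^\iota_\jone)_+$ and the power of $|\eta|$ is adjusted to $2|\gamma^\iota|-2|\alpha|-2|\beta^\iota|+|a^\iota|+\done$ by bookkeeping the Jacobian powers of $|\eta|$ (the $+\done$ coming from the $\prod_\jone t_\jone^{r_\jone-1}\,dt_\jone$ change of variables, one power per block). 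Finally, the measure $\sigma_\iota$ on $\leftclosedint 0,\infty\rightopenint^{\tdone}$ is the pushforward under $\xi \mapsto (|\bar P^\eta_\jone\xi|^2)_\jone$ of Lebesgue measure on $\ker J_\eta$, which is independent of $\eta$ up to a power of $|\eta|$ (again absorbed into the $|\eta|$-exponent) since $\ker J_\eta$ has constant dimension; this gives the claimed regular Borel measure. The main obstacle I anticipate is precisely this bookkeeping of homogeneities: one must track, separately for each block $\jone$, the power of $t_\jone$ coming from $|\Psi_{\iota,\jone}|^2$ versus the power $t_\jone^{r_\jone-1}$ from the Jacobian, verify that their difference $h^\iota_\jone$ makes the hypothesis $(k_\jone - h^\iota_\jone)_+ \le \beta^\iota_\jone - (\deg_{\fst_\jone}\Psi_{\iota,\jone})/2 + (\text{const})$ hold so that the last bullet of Proposition~\ref{prp:weightedkernel} feeds correctly into the count, and then confirm that all stray powers of $b^\eta_\jone \simeq |\eta|$ assemble into the single exponent displayed in \eqref{eq:weightedl2} — a computation that is routine in spirit but delicate to get exactly right.
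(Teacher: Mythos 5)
Your overall architecture coincides with the paper's: Plancherel in $(z,u)$, block-wise polar coordinates with $t_\jone=|P^\eta_\jone\xi|^2/b^\eta_\jone$ and $\mu_\jone=|\bar P^\eta_\jone\xi|^2$, then Lemma~\ref{lem:laguerreorthogonality} to convert the type/weight mismatch into extra discrete differences, and finally the last bullet of Proposition~\ref{prp:weightedkernel} to keep the total order below $|\alpha|$. However, there is a genuine gap in your treatment of the weight $\Psi_\iota$. You claim that $|\Psi_{\iota,\jone}(\eta,\xi)|^2$ factors as a power of $b^\eta_\jone$ times a bounded function on a sphere times $t_\jone^{\deg_{\fst_\jone}\Psi_{\iota,\jone}}$, i.e.\ a pure power of the Laguerre argument. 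This is false for the factors $\partial_{\eta_\jtwo}|P^\eta_\jone\xi|^2$: since $\partial_{\eta_\jtwo}P^\eta_\jone$ is ``off-diagonal'' with respect to the range/kernel decomposition of $P^\eta_\jone$, such a factor is essentially bilinear in $P^\eta_\jone\xi$ and $\bar P^\eta_\jone\xi$, and the quotient $\partial_{\eta_\jtwo}|P^\eta_\jone\xi|^2/|P^\eta_\jone\xi|^2$ is \emph{unbounded} (it blows up as $P^\eta_\jone\xi\to 0$ with $\bar P^\eta_\jone\xi$ fixed). The only available pointwise bound is of the form $|\partial_{\eta_\jtwo}|P^\eta_\jone\xi|^2|\leq C|\eta|^{-1}|P^\eta_\jone\xi|\,|P_\jone\xi|$, and one must then expand $|P_\jone\xi|^2=|P^\eta_\jone\xi|^2+|\bar P^\eta_\jone\xi|^2$. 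This produces, for each $\iota$, a whole range of terms in which the power of $t_\jone$ is $r_\jone-1+h_\jone$ with $h_\jone$ running from $(\deg_{\fst_\jone}\Psi_{\iota,\jone})/2$ up to $2\theta^\iota_\jone+2\beta^\iota_\jone$, the complementary powers landing on $\mu_\jone$ (so $\mu$ is not a spectator: these powers are what build the measures $\sigma_\iota$). The same issue affects the $\fst_\jone$-homogeneity carried by $\Psi_{\iota,0}$, which you ignore entirely.

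This is not mere bookkeeping: the worst case $h_\jone=(\deg_{\fst_\jone}\Psi_{\iota,\jone})/2$ is exactly what the factor $\tfrac12$ in the last bullet of Proposition~\ref{prp:weightedkernel}, namely $\sum_\jone(\beta^\iota_\jone-(\deg_{\fst_\jone}\Psi_{\iota,\jone})/2)_+$, is calibrated for. With your (invalid) exponent $h_\jone=\deg_{\fst_\jone}\Psi_{\iota,\jone}$ the count $|\gamma^\iota|+|\theta^\iota|+|\beta^\iota+(\beta^\iota-h)_+|\leq|\alpha|$ would be comfortably slack, but it rests on a pointwise factorization that does not hold; with the correct splitting the count is tight and goes through precisely because $h_\jone\geq(\deg_{\fst_\jone}\Psi_{\iota,\jone})/2$. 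To repair the proof you should insert the estimate on $\partial_{\eta_\jtwo}|P^\eta_\jone\xi|^2$ above, carry out the binomial expansion of $|P_\jone\xi|^{2\deg_{\fst_\jone}\Psi_{\iota,0}+\deg_{\fst_\jone}\Psi_{\iota,\jone}}$ before passing to polar coordinates, and only then apply Lemma~\ref{lem:laguerreorthogonality} term by term.
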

\begin{proof}
Note that, for all $\jone \in \{1,\dots,\done\}$,
\[\partial_{\eta_\jtwo} (|P^\eta_\jone \xi|^2) = 2 \langle (\partial_{\eta_\jtwo} P^\eta_\jone) P_\jone \xi, P^\eta_\jone \xi \rangle \leq C|\eta|^{-1} |P^\eta_\jone \xi| |P_\jone \xi|;\]
consequently, if $\Psi_\iota,\Psi_{\iota,\jone},\gamma^\iota,\theta^\iota,\beta^\iota$ are as in the statement of Proposition~\ref{prp:weightedkernel}, then
\[|\Psi_{\iota,\jone}(\eta,\xi)|^2 \leq C_\iota |\eta|^{2\deg_\ctr \Psi_{\iota,\jone}} |P^\eta_\jone \xi|^{\deg_{\fst_\jone} \Psi_{\iota,\jone}} |P_\jone \xi|^{\deg_{\fst_\jone} \Psi_{\iota,\jone}}\]
for all $\jone \in \{1,\dots,\done\}$, hence
\[\begin{split}
|\Psi_\iota(\eta,\xi)|^2 &\leq C_\iota |\eta|^{2\deg_\ctr \Psi_\iota} \prod_{\jone=1}^{\done} |P^\eta_\jone \xi|^{\deg_{\fst_\jone} \Psi_{\iota,\jone}} |P_\jone \xi|^{2\deg_{\fst_\jone} \Psi_{\iota,0} + \deg_{\fst_\jone} \Psi_{\iota,\jone}} \\
&\leq  C_\iota |\eta|^{2|\gamma^\iota|-2|\alpha|-2|\beta^\iota|} \prod_{\jone=1}^{\done} \sum_{h_\jone = (\deg_{\fst_\jone} \Psi_{\iota,\jone})/2}^{2\theta^\iota_\jone + 2\beta^\iota_\jone} |P^\eta_\jone \xi|^{2h_\jone} |\bar P^\eta_\jone \xi|^{4\theta^\iota_\jone + 4\beta^\iota_\jone-2h_\jone},
\end{split}\]
and moreover, for all $h \in \N^\done$, if $h_j \geq (\deg_{\fst_\jone} \Psi_{\iota,\jone})/2$ for all $j \in \{1,\dots,\done\}$, then
\[|\gamma^\iota| + |\theta^\iota| + |\beta^\iota| + \sum_{\jone=1}^{\tdone} (\beta^\iota_\jone - h_\jone)_+ \leq |\alpha|.\]

By Proposition~\ref{prp:weightedkernel}, Plancherel's formula and the triangular inequality, we then obtain that the left-hand side of \eqref{eq:weightedl2} is majorized by a finite sum of terms of the form
\begin{multline}\label{eq:generalterm}
\int_{\dctr} \int_{\fst} \Biggl| \sum_{n \in \N^\done} \partial_\eta^{\gamma} \partial_\mu^{\theta} \delta^{\beta} m(n,(|\bar P^\eta_\jone \xi|^2)_{\jone\leq\tdone},\eta) 
\, \prod_{\jone=1}^\done \Ell^{(r_\jone-1+\beta_\jone)}_{n_\jone}(|P^\eta_\jone \xi|^2/b^\eta_\jone) \Biggr|^2 \\
\times |\eta|^{2|\gamma|-2|\alpha|-2|\beta|} \, \prod_{\jone=1}^\done |P^\eta_\jone \xi|^{2h_\jone} \prod_{\jone=1}^{\tdone} |\bar P^\eta_\jone \xi|^{2k_\jone} \,d\xi \,d\eta,
\end{multline}
where $\gamma \in \N^\dtwo$, $\theta,k \in \N^{\tdone}$, $\beta,h \in \N^\done$ and $|\gamma|+|\theta|+|\beta + (\beta-h)_+| \leq |\alpha|$. Simple changes of variables (rotation, polar coordinates and rescaling) allow one to rewrite \eqref{eq:generalterm} as a constant times
\begin{multline*}
\int_{\dctr} \int_{\leftopenint 0,\infty\rightopenint^{\tdone}} \int_{\leftopenint 0,\infty\rightopenint^{\done}} \Biggl| \sum_{n \in \N^\done} \partial_\eta^{\gamma} \partial_\mu^{\theta} \delta^{\beta} m(n,\mu,\eta) 
\, \prod_{\jone=1}^\done \Ell^{(r_\jone-1+\beta_\jone)}_{n_\jone}(t_\jone) \Biggr|^2 \, \prod_{\jone=1}^\done t_\jone^{r_\jone-1+h_\jone} \,dt \\
\times |\eta|^{2|\gamma|-2|\alpha|-2|\beta|} \prod_{\jone=1}^\done (b^\eta_\jone)^{h_\jone+r_\jone} \prod_{\jone=1}^{\tdone} \mu_\jone^{k_\jone+(\dim \fst_\jone-2r_\jone)/2} \,\frac{d\mu}{\mu_1 \cdots \mu_{\tdone}} \,d\eta.
\end{multline*}
By exploiting the fact that the $b^\eta_\jone$ are smooth functions of $\eta \in \dctr$, homogeneous of degree $1$ (see the proof of Proposition~\ref{prp:weightedkernel}), and applying Lemma~\ref{lem:laguerreorthogonality} to the inner integral, the last quantity is majorized by
\begin{multline*}
C \int_{\dctr} \int_{\leftopenint 0,\infty\rightopenint^{\tdone}} \sum_{n \in \N^\done} | \partial_\eta^{\gamma} \partial_\mu^{\theta} \delta^{\beta+(\beta- h)_+} m(n,\mu,\eta) 
|^2 \, \prod_{\jone=1}^\done (1+ n_\jone)^{r_\jone -1 + h_\jone+2(\beta_\jone-h_\jone)_+} \\
\times |\eta|^{2|\gamma|-2|\alpha|-2|\beta|+ |h|+|r|} \prod_{\jone=1}^{\tdone} \mu_\jone^{k_\jone+(\dim \fst_\jone-2r_\jone)/2} \,\frac{d\mu}{\mu_1 \dots \mu_{\tdone}} \,d\eta,
\end{multline*}
and since the exponents $k_\jone+(\dim \fst_\jone-2r_\jone)/2$ are strictly positive, while
\[
-2|\beta| + |h| + |r| = -2|\beta+(\beta-h)_+| + \sum_{\jone=1}^\done (r_\jone-1+h_\jone+2(\beta_\jone-h_\jone)_+) + \done
\]
and $|\gamma|+|\theta|+|\beta + (\beta-h)_+| \leq |\alpha|$, the conclusion follows by suitably renaming the multiindices.
\end{proof}

\section{From discrete to continuous}

Via the fundamental theorem of integral calculus, finite differences can be estimated by continuous derivatives. The next lemma is a multivariate analogue of \cite[Lemma~6]{martini_n32}, and we omit the proof (see also \cite[Lemma~7]{martini_grushin2}).

\begin{lem}\label{lem:discretecontinuous}
Let $f : \N^\done \to \C$ have a smooth extension $\tilde f : \leftclosedint 0,\infty \rightopenint^\done \to \C$, and let $\beta \in \N^\done$. Then
\[\delta^\beta f(n) = \int_{J_\beta} \partial^\beta \tilde f(n+s) \,d\nu_\beta(s)\]
for all $n \in \N$, where $J_\beta = \prod_{\jone=1}^\done \leftclosedint 0,\beta_\jone\rightclosedint$ and $\nu_\beta$ is a Borel probability measure on $J_\beta$. In particular
\[|\delta^\beta f(n)|^2 \leq \int_{J_\beta} |\partial^\beta \tilde f(n+s)|^2 \,d\nu_\beta(s)\]
for all $n \in \N^\done$.
\end{lem}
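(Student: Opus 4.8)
The plan is to establish the one-dimensional case $\done=1$ first and then recover the general statement by applying it coordinate by coordinate. For $\done=1$ I would induct on $\beta\in\N$. The case $\beta=0$ is trivial ($J_0=\{0\}$, $\nu_0$ the Dirac mass at $0$), and the case $\beta=1$ is exactly the fundamental theorem of calculus: since $f$ has the smooth extension $\tilde f$, one has $\delta f(n)=\tilde f(n+1)-\tilde f(n)=\int_0^1\partial\tilde f(n+s)\,ds$, so $\nu_1$ is Lebesgue measure on $[0,1]$.

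For the inductive step, assume $\delta^\beta f(n)=\int_{[0,\beta]}\partial^\beta\tilde f(n+\sigma)\,d\nu_\beta(\sigma)$ for all $n$. Because $\tilde f$ is smooth and $\nu_\beta$ has compact support, the right-hand side, as a function of $n$, is a smooth extension $g_\beta$ of $\delta^\beta f$ to $[0,\infty)$, and one may differentiate under the integral sign. Applying the $\beta=1$ identity to $g_\beta$ in place of $\tilde f$ gives $\delta^{\beta+1}f(n)=g_\beta(n+1)-g_\beta(n)=\int_0^1 g_\beta'(n+s)\,ds=\int_0^1\!\!\int_{[0,\beta]}\partial^{\beta+1}\tilde f(n+s+\sigma)\,d\nu_\beta(\sigma)\,ds$, which is $\int_{[0,\beta+1]}\partial^{\beta+1}\tilde f(n+\sigma)\,d\nu_{\beta+1}(\sigma)$ with $\nu_{\beta+1}$ the image of $\nu_1\otimes\nu_\beta$ under addition, a Borel probability measure supported on $[0,1]+[0,\beta]=[0,\beta+1]$ (concretely the $\beta$-fold convolution of Lebesgue measure on $[0,1]$, but only these abstract properties are needed). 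For general $\done$, the operators $\delta_1,\dots,\delta_\done$ pairwise commute and $\delta_\jone$ acts only on the $\jone$-th variable, so iterating the one-dimensional result over the $\done$ coordinates --- differentiation under the successive integrals again being harmless by smoothness of $\tilde f$ --- yields $\delta^\beta f(n)=\int_{J_\beta}\partial^\beta\tilde f(n+s)\,d\nu_\beta(s)$ with $\nu_\beta=\nu_{\beta_1}\otimes\cdots\otimes\nu_{\beta_\done}$, a Borel probability measure on $J_\beta$. The final inequality is then immediate: since $\nu_\beta$ is a probability measure, Jensen's inequality (equivalently Cauchy--Schwarz) gives $|\delta^\beta f(n)|^2\le\int_{J_\beta}|\partial^\beta\tilde f(n+s)|^2\,d\nu_\beta(s)$.

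There is essentially no obstacle here --- which is presumably why the authors omit the proof. The only points deserving a little care are the bookkeeping of the convolution structure of the $\nu_\beta$ (so that one genuinely lands on $J_\beta$ with a probability measure) and the justification of differentiating under the integral sign at each stage, both routine given the smoothness of $\tilde f$ and the compactness of all supports involved. Alternatively one could skip the induction altogether and write $\delta^\beta f(n)$ directly as an iterated integral $\int_{[0,1]^{|\beta|}}\partial^\beta\tilde f\bigl(n+\textstyle\sum_{\jone}(s_{\jone,1}+\cdots+s_{\jone,\beta_\jone})e_\jone\bigr)\,ds$ and push this forward along the summation map to obtain $\nu_\beta$ and the claimed representation; this is the same computation repackaged.
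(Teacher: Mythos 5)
Your proof is correct. The paper itself omits the proof of this lemma (deferring to the one-dimensional analogue in \cite{martini_n32} and to \cite{martini_grushin2}), and your argument --- the fundamental theorem of calculus for $\beta=1$, induction with $\nu_{\beta+1}$ the additive pushforward of $\nu_1\otimes\nu_\beta$, tensorization over the $\done$ coordinates, and Cauchy--Schwarz for the final inequality --- is precisely the standard route those references take; the justifications you flag (differentiation under the integral, compact supports) are indeed the only points needing any care, and you handle them adequately.
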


We give now a simplified version of the right-hand side of \eqref{eq:weightedl2}, in the case we restrict to the functional calculus of $L$ alone. In order to avoid issues of divergent series, it is however convenient at first to truncate the multiplier along the spectrum of $\vecU$.

\begin{lem}\label{lem:weightedplancherel}
Let $\chi \in C^\infty_c(\R)$ be supported in $\leftclosedint 1/2,2 \rightclosedint$, $K \subseteq \R$ be compact and $M \in \leftopenint 0, \infty \rightopenint$. If $F : \R \to \C$ is smooth and supported in $K$, and $F_M : \R \times \ctr^* \to \C$ is given by
\[F_M(\lambda,\eta) = F(\lambda) \, \chi(|\eta|/M),\]
then, for all $r \in \leftclosedint 0, \infty \rightopenint$,
\[\int_{G} | |u|^r \, \Kern_{F_M(L,\vecU)}(z,u) |^2 \,dz \,du \leq C_{K,\chi,r} \, M^{\dtwo-2r} \|F\|_{W_2^r}^2.\]
\end{lem}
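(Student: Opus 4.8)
The plan is to obtain the estimate from the weighted Plancherel bound of Proposition~\ref{prp:generalweightedl2}, applied to the multiplier $H=F_M$. One preliminary point: although $F_M$, read as a function on $\R^\done\times\ctr^*$ via $(\lambda_1,\dots,\lambda_\done,\eta)\mapsto F(\lambda_1+\dots+\lambda_\done)\,\chi(|\eta|/M)$, is not compactly supported in the $\lambda$-variables, we may multiply it by a $\zeta\in C_c^\infty(\R^\done)$ equal to $1$ on the compact set $\{\lambda:\lambda_\jone\ge0,\ \sum_\jone\lambda_\jone\in K\}$ without changing the operator $F_M(\vecL,\vecU)=F(L)\,\chi(|\vecU|/M)$ (the joint $L^2$-spectrum of $(\vecL,\vecU)$ lies in $\leftclosedint0,\infty\rightopenint^\done\times\ctr^*$); the new multiplier is smooth and compactly supported in $\R^\done\times\dctr$, and its reparametrisation \eqref{eq:multiplier} still equals $m(n,\mu,\eta)=F(\lambda)\,\chi(|\eta|/M)$, where $\lambda=\lambda(n,\mu,\eta)=\sum_\jone(2n_\jone+r_\jone)b_\jone^\eta+\sum_{\jone\le\tdone}\mu_\jone$, on the region where it does not vanish. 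Moreover both sides of the asserted inequality behave well under interpolation ($M^{\dtwo-2r}$ is log-convex in $r$, $W_2^r$ is a complex interpolation scale, and $F\mapsto|u|^r\Kern_{F_M(L,\vecU)}$ is linear with values in $L^2(G)$), so by an analytic interpolation argument it is enough to treat $r=N\in\N$; the case $N=0$ comes out as a byproduct. For $r=N$ we use $|u|^{2N}\le C\sum_{|\alpha|=N}u^{2\alpha}$ and bound each $\int_G|u^\alpha\Kern_{F_M(L,\vecU)}|^2$ by means of \eqref{eq:weightedl2}.

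The first technical ingredient is a pointwise bound for the quantities $\partial_\eta^{\gamma^\iota}\partial_\mu^{\theta^\iota}\delta^{\beta^\iota}m$ entering \eqref{eq:weightedl2}. Since $\lambda$ is affine in $(n,\mu)$, with $\partial_{\mu_l}\lambda=1$ and $\partial_{n_\jone}\lambda=2b_\jone^\eta$, one has $\partial_\mu^\theta m=F^{(|\theta|)}(\lambda)\,\chi(|\eta|/M)$; converting the finite differences $\delta^\beta$ in $n$ into derivatives $\partial_n^\beta$ by Lemma~\ref{lem:discretecontinuous}, then applying Leibniz' rule in $\eta$ together with the Fa\`a di Bruno formula and the bounds $|\partial_\eta^\gamma b_\jone^\eta|\le C_\gamma M^{1-|\gamma|}$ and $|\partial_\eta^\gamma[\chi(|\eta|/M)]|\le C_\gamma M^{-|\gamma|}$, valid on $\{|\eta|\sim M\}$ because $b_\jone^\eta$ is smooth on $\dctr$ and homogeneous of degree~$1$, one arrives at
\begin{multline*}
|\partial_\eta^\gamma\partial_\mu^\theta\delta^\beta m(n,\mu,\eta)|^2\\
\le C\sum_{l=|\beta|}^{|\beta|+|\gamma|}M^{2l-2|\gamma|}\,(1+|n|)^{2(l-|\beta|)}\int_{J_\beta}|F^{(|\theta|+l)}(\lambda(n+s,\mu,\eta))|^2\,d\nu_\beta(s)
\end{multline*}
on $\{|\eta|\sim M\}$, and $\equiv 0$ elsewhere. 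The key observation is the exponent count: every $\eta$-derivative not landing on $\chi(|\eta|/M)$ either raises the order of $F^{(\cdot)}$ at the cost of a factor $\partial_\eta^{(\cdot)}\lambda=O((1+|n|)M^{1-(\cdot)})$, or falls on some $b_\jone^\eta$, and the powers of $M$ always reassemble into $M^{2l-2|\gamma|}$. By the constraints $l\le|\beta|+|\gamma|$ and $|\gamma^\iota|+|\theta^\iota|+|\beta^\iota|\le|\alpha|=N$, the order $|\theta|+l$ never exceeds $N$.

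Inserting this into \eqref{eq:weightedl2}, each summand is dominated by a finite combination of terms
\begin{multline*}
M^{2l-2|\gamma|}\int_{\dctr}\int_{\leftclosedint0,\infty\rightopenint^{\tdone}}\sum_n(1+|n|)^{2(l-|\beta|)}\prod_\jone(1+n_\jone)^{a_\jone}\\
\times\Bigl[\int_{J_\beta}|F^{(k)}(\lambda(n+s,\mu,\eta))|^2\,d\nu_\beta(s)\Bigr]\,|\eta|^{E}\,d\sigma(\mu)\,d\eta,
\end{multline*}
with $k=|\theta|+l\le N$, $E=2|\gamma|-2N-2|\beta|+|a|+\done$, and $d\sigma(\mu)=\prod_{\jone\le\tdone}\mu_\jone^{e_\jone}\,d\mu$, $e_\jone>-1$. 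I would now: (i) collapse the $\mu$-integral, using $\int_{[0,\infty)^\tdone}g(\sum_\jone\mu_\jone)\prod_\jone\mu_\jone^{e_\jone}\,d\mu=c\int_0^\infty g(\nu)\,\nu^{e}\,d\nu$ with $e=\sum_\jone e_\jone+\tdone-1>-1$, so that $\lambda=\nu+\sum_\jone(2n_\jone+r_\jone)b_\jone^\eta$ (in the M\'etivier-type case $\tdone=0$ this step is vacuous, with $\nu\equiv0$); (ii) pass to polar coordinates $\eta=\rho\omega$, $\rho\sim M$, $\omega\in S^{\dtwo-1}$, so that $\lambda=\nu+\rho\,c(n+s,\omega)$ with $c(n+s,\omega)=\sum_\jone(2(n_\jone+s_\jone)+r_\jone)b_\jone^\omega\in\leftclosedint c_0,\infty\rightopenint$; (iii) change variables $\rho\mapsto\lambda$, then integrate $\nu$ over the window $\{\nu\ge0:\lambda-\nu\sim cM\}$, on which $\int\nu^{e}\,d\nu\lesssim cM\,\lambda^{e}$ (using $e>-1$ and $\lambda\gtrsim cM$); (iv) estimate $\int|F^{(k)}(\lambda)|^2\lambda^{e}\,d\lambda\le C_K\,(cM)^{\min\{e,0\}}\|F^{(k)}\|_2^2$, again using $\lambda\gtrsim cM$ and the boundedness of $\supp F$. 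The upshot is that the displayed term is $\lesssim M^{2l-2|\gamma|+E+\dtwo+\min\{e,0\}}\|F^{(k)}\|_2^2\sum_n(1+|n|)^{2(l-|\beta|)+|a|+\min\{e,0\}}$, where the sum runs over $\{n:|n|\lesssim1/M\}$ (the constraint forced by $\rho\,c\le\sup K$ and $\rho\gtrsim M$ on $\supp F$) and is thus $\lesssim M^{-(2(l-|\beta|)+|a|+\min\{e,0\}+\done)}$. Adding up the exponents and substituting $E$, all contributions of $l$, $|a|$, $|\beta|$ and $\min\{e,0\}$ cancel, leaving exactly $M^{\dtwo-2N}$. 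Since $\|F^{(k)}\|_2\le\|F\|_{W_2^N}$ for $k\le N$, summing over the finitely many terms and over $\alpha$ yields $\int_G||u|^N\Kern_{F_M(L,\vecU)}|^2\le C_{K,\chi,N}\,M^{\dtwo-2N}\|F\|_{W_2^N}^2$.

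I expect the main difficulty to be the last step: one must verify that the powers of $M$ produced by differentiating $m$ (the $M^{2l-2|\gamma|}$), by the weight $|\eta|^{E}$ of Proposition~\ref{prp:generalweightedl2}, by the Jacobians of the polar and $\rho\mapsto\lambda$ substitutions, and by the growth $\sim M^{-\done}$ of the range of $n$, all cancel to leave $M^{\dtwo-2N}$; and in particular that a possibly negative exponent $e$ in the $\mu$-measure $d\sigma$ — which really does occur (for the group $\Heis_{1,2}$ one gets $d\sigma(\mu)=\mu_1^{-1/2}\,d\mu_1$) — is absorbed by the radial $\eta$-integral rather than causing a loss. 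It is precisely the $\dtwo$-dimensional integration over $\dctr$ that compensates for it.
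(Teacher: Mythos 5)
Your argument is correct and follows essentially the same route as the paper: reduction to integer $r$ by interpolation, Proposition~\ref{prp:generalweightedl2} combined with Lemma~\ref{lem:discretecontinuous}, explicit differentiation of $m$ using the homogeneity of the $b_\jone^\eta$ and of $\chi(|\eta|/M)$, polar coordinates in $\eta$, and the lattice-point count $|n|\lesssim 1/M$ forced by $\supp F\subseteq K$, with the same final cancellation of exponents to $M^{\dtwo-2N}$. The only (harmless) deviations are that you make the compact-support issue for $H$ explicit via a cutoff $\zeta$, and that you integrate out $\mu$ explicitly with the weight $\nu^{e}$, $e>-1$, where the paper simply bounds the inner integral uniformly in $\mu$ over a compact set and uses that $\sigma_\iota$ is finite on compacta.
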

\begin{proof}
We may restrict to the case $r \in \N$, the other cases being recovered a posteriori by interpolation. Hence we need to prove that
\begin{equation}\label{eq:weightedmultiindex}
\int_{G} | u^\alpha \, \Kern_{F_M(L,\vecU)}(z,u) |^2 \,dz \,du \leq C_{K,\chi,\alpha} \, M^{\dtwo-2|\alpha|} \|F\|_{W_2^{|\alpha|}}^2
\end{equation}
for all $\alpha \in \N^{\done}$. On the other hand, if $m$ is defined by
\begin{equation}\label{eq:simplermultiplier}
m(n,\mu,\eta) = F\Biggl( \sum_{\jone=1}^\done b^\eta_\jone \langle n_\jone \rangle_\jone + |\mu|_\Sigma \Biggr) \, \chi(|\eta|/M),
\end{equation}
where $\langle \ell \rangle_{\jone} = 2\ell + r_\jone$ and $|\mu|_\Sigma = \sum_{\jone=1}^\tdone \mu_\jone$, then the left-hand side of \eqref{eq:weightedmultiindex} is majorized by the right-hand side of \eqref{eq:weightedl2}, and we are reduced to prove
\begin{multline}\label{eq:weightedsummand}
\sum_{n \in \N^\done} \int_{\dctr} \int_{\leftclosedint 0,\infty\rightopenint^{\tdone}}  |\partial_\eta^{\gamma^\iota} \partial_\mu^{\theta^\iota} \delta^{\beta^\iota} m(n,\mu,\eta)|^2 
\, |\eta|^{2|\gamma^\iota|-2|\alpha|-2|\beta^\iota|+|a^\iota|+\done} \\
\times (1+n_1)^{a^\iota_1} \dots (1+n_\done)^{a^\iota_\done} 
\, d\sigma_\iota(\mu) \,d\eta \leq C_{K,\chi,\alpha} \, M^{\dtwo-2|\alpha|} \|F\|_{W_2^{|\alpha|}}^2
\end{multline}
for all $\iota \in \tilde I_\alpha$, where $\tilde I_\alpha$, $\gamma^\iota$, $\theta^\iota$, $\beta^\iota$, $a^\iota$, $\sigma_\iota$ are as in Proposition~\ref{prp:generalweightedl2}.

Note that the right-hand side of \eqref{eq:simplermultiplier} makes sense for all $n \in \R^\done$, and defines a smooth extension of $m$, which we still denote by $m$ by a slight abuse of notation. Hence, by Lemma~\ref{lem:discretecontinuous},
\begin{equation}\label{eq:discretecontinuous}
|\partial_\eta^{\gamma_\iota} \partial_\mu^{\theta^\iota} \delta^{\beta^\iota} m(n,\mu,\eta)|^2 \leq \int_{J_\iota} |\partial_{\eta}^{\gamma^\iota} \partial_\mu^{\theta^\iota} \partial_n^{\beta^\iota} m(n+s,\mu,\eta)|^2 \,d\nu_\iota(s),
\end{equation}
where $J_\iota = \prod_{\jone=1}^\done \leftclosedint 0,\beta^\iota_\jone \rightclosedint$ and $\nu_\iota$ is a suitable probability measure on $J_\iota$. Moreover the measure $\sigma_\iota$ in \eqref{eq:weightedsummand} is finite on compacta, and the right-hand side of \eqref{eq:discretecontinuous} vanishes when $|\mu|_\Sigma > \max K$, because $\supp F \subseteq K$. Consequently \eqref{eq:weightedsummand} will be proved if we show that
\begin{multline}\label{eq:weightedsummand2}
\sum_{n \in \N^\done} \int_{\dctr} |\partial_\eta^{\gamma^\iota} \partial_\mu^{\theta^\iota} \partial_n^{\beta^\iota} m(n+s,\mu,\eta)|^2 
\, |\eta|^{2|\gamma^\iota|-2|\alpha|-2|\beta^\iota|+|a^\iota|+\done} \\
\times (1+ n_1)^{a^\iota_1} \dots (1+n_\done)^{a^\iota_\done} 
 \,d\eta \leq C_{K,\chi,\alpha} \, M^{\dtwo-2|\alpha|} \|F\|_{W_2^{|\alpha|}}^2
\end{multline}
for all $s \in J_\iota$ and $\mu \in \leftclosedint 0,\max K \rightclosedint^{\tdone}$, uniformly in $s$ and $\mu$.

As observed in the proof of Proposition~\ref{prp:weightedkernel}, the $b_\jone^\eta$ are positive, smooth functions of $\eta \in \dctr$, homogeneous of degree $1$; therefore, for all $n \in \N^\done$, $\jone \in \{1,\dots,\done\}$, $\eta \in \dctr$, $s \in \leftclosedint 0,\infty \rightopenint^\done$, $\mu \in \leftclosedint 0,\infty \rightopenint^{\tdone}$,
\begin{equation}\label{eq:upperbound_eta_n}
|\eta| (1+ n_\jone) \sim b_\jone^\eta \langle n_\jone \rangle_\jone \leq \sum_{l=1}^\done  b^\eta_l \langle n_l + s_l \rangle_l + |\mu|_\Sigma,
\end{equation}
and the last quantity is bounded by the constant $\max K$ whenever $(n+s,\mu,\eta) \in \supp m$, because $\supp F \subseteq K$. Hence the factors $|\eta| (1+ n_\jone)$ in the left-hand side of \eqref{eq:weightedsummand2} can be discarded, that is, we are reduced to proving \eqref{eq:weightedsummand2} in the case $a^\iota = 0$.

From \eqref{eq:simplermultiplier} it is easily proved inductively that
\begin{multline*}
\partial_\eta^{\gamma^\iota} \partial_\mu^{\theta^\iota} \partial_n^{\beta^\iota} m(n,\mu,\eta) 
= \sum_{\substack{\upsilon \in \N^\done \\ |\upsilon| \leq |\gamma^\iota|}} \sum_{q=0}^{|\gamma^\iota| - |\upsilon|} F^{(|\theta^\iota|+|\beta^\iota|+|\upsilon|)}\Biggl(\sum_{\jone=1}^\done b_\jone^\eta \langle n_\jone \rangle_\jone + |\mu|_\Sigma\Biggr) \\
\times \Psi_{\iota,\upsilon,q}(\eta) \, M^{-q} \, \chi^{(q)}(|\eta|/M) \prod_{\jone=1}^\done \langle n \rangle_\jone^{\upsilon_\jone} 
\end{multline*}
where $\Psi_{\iota,\upsilon,q} : \dctr \to \R$ is smooth and homogeneous of degree $|\beta^\iota| + |\upsilon| + q - |\gamma^\iota|$. By exploiting again \eqref{eq:upperbound_eta_n} and the fact that $\supp F \subseteq K$, we than obtain that
\begin{multline*}
|\partial_\eta^{\gamma^\iota} \partial_\mu^{\theta^\iota} \partial_n^{\beta^\iota} m(n,\mu,\eta)|^2 
\leq C_{K,\chi,\alpha} M^{2|\beta^\iota| - 2|\gamma^\iota|} \tilde\chi(|\eta|/M) \\
\times \sum_{v=0}^{|\gamma^\iota|}  \Biggl|F^{(|\beta^\iota|+|\theta^\iota|+v)}\Biggl(\sum_{\jone=1}^\done b_\jone^\eta \langle n_\jone \rangle_\jone + |\mu|_\Sigma\Biggr)\Biggr|^2,
\end{multline*}
where $\tilde\chi$ is the characteristic function of $\leftclosedint 1/2,2\rightclosedint$. Hence the left-hand side of \eqref{eq:weightedsummand2}, when $a^\iota = 0$, is majorized by
\begin{multline*}
C_{K,\chi,\alpha} M^{\done -2|\alpha|} \\
\times \sum_{v=0}^{|\gamma_i|} \int_{\dctr} \sum_{n\in \N^\done} \Biggl|F^{(|\beta^\iota|+|\theta^\iota|+v)}\Biggl(\sum_{\jone=1}^\done b_\jone^\eta \langle n_\jone+s_\jone \rangle_\jone + |\mu|_\Sigma\Biggr)\Biggr|^2 \, \tilde\chi(|\eta|/M)  \,d\eta.
\end{multline*}
Let $S$ denote the unit sphere in $\ctr^*$. By passing to polar coordinates and exploiting the homogeneity of the $b_\jone^\eta$, the integral in the above formula is majorized by
\begin{multline*}
C \int_{S} \int_0^\infty \sum_{n\in \N^\done} \Biggl|F^{(|\beta^\iota|+|\theta^\iota|+v)}\Biggl(\rho \sum_{\jone=1}^\done b_\jone^\omega \langle n_\jone+s_\jone \rangle_\jone + |\mu|_\Sigma\Biggr)\Biggr|^2 \, \tilde\chi(\rho/M)  \rho^{\dtwo} \,\frac{d\rho}{\rho} \,d\omega \\
\leq C M^\dtwo \int_0^\infty |F^{(|\beta^\iota|+|\theta^\iota|+v)}(\rho + |\mu|_\Sigma)|^2 \int_S \sum_{n \in \N^\done} \tilde \chi(\rho/(M \langle n \rangle_{\omega,s})) \,d\omega \,\frac{d\rho}{\rho}
\end{multline*}
where  $\langle n \rangle_{\omega,s} = \sum_{\jone=1}^\done b_\jone^\omega \langle n_\jone+s_\jone \rangle_\jone \sim 1+|n|$ uniformly in $\omega \in S$ and $s \in J_\iota$.
Hence the sum in the right-hand side has at most $C_\iota (\rho/M)^\done$ summands, and the integral on $S$ is majorized by $C_\iota (\rho/M)^\done$. In conclusion, the left-hand side of \eqref{eq:weightedsummand2} is majorized by
\begin{multline*}
C_{K,\chi,\alpha} M^{\dtwo -2|\alpha|} \sum_{v=0}^{|\gamma_i|} \int_0^\infty |F^{(|\beta^\iota|+|\theta^\iota|+v)}(\rho + |\mu|_\Sigma)|^2 \, \rho^{\done-1} \,d\rho \\
\leq C_{K,\chi,\alpha} M^{\dtwo-2|\alpha|} \|F\|_{W_2^{|\alpha|}}^2,
\end{multline*}
because $d_1 \geq 1$, $\supp F \subseteq K$ and $|\beta^\iota| + |\theta^\iota| + |\gamma^\iota| \leq |\alpha|$, and we are done.
\end{proof}

\begin{prp}\label{prp:weightedl2}
Let $F : \R \to \C$ be smooth and such that $\supp F \subseteq K$ for some compact set $K \subseteq \R$. For all $r \in \leftclosedint 0, \dtwo/2 \rightopenint$,
\[
\int_{G} \left| (1+|u|)^r \, \Kern_{F(L)}(z,u) \right|^2 \,dz \,du \leq C_{K,r} \|F\|_{W_2^r}^2.
\]
\end{prp}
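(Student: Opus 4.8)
The plan is to recover $F(L)$ from the truncated operators of Lemma~\ref{lem:weightedplancherel} by a dyadic decomposition of the spectrum of $\vecU$. Fix $\chi \in C^\infty_c(\R)$ supported in $\leftclosedint 1/2,2\rightclosedint$ with $\sum_{k \in \Z} \chi(2^{-k} t) = 1$ for all $t > 0$ (such a $\chi$ exists: normalise any nonnegative bump), and let $F_M(\lambda,\eta) = F(\lambda)\,\chi(|\eta|/M)$ as in that lemma. Then $\sum_{k \in \Z} F_{2^k}(\lambda,\eta) = F(\lambda)$ whenever $\eta \neq 0$; since $\{\eta = 0\}$ is null for the joint spectral measure of $(L,\vecU)$ (the inversion formula \eqref{eq:groupinversion} ranges only over $\dctr$, and the one-dimensional representations of $G$ have $\vecU$ acting as $0$), it follows that $\Kern_{F(L)} = \sum_{k \in \Z} \Kern_{F_{2^k}(L,\vecU)}$, with the convergence to be justified a posteriori. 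By the triangle inequality it then suffices to bound $\sum_{k \in \Z} \| (1+|u|)^r \, \Kern_{F_{2^k}(L,\vecU)} \|_2$.

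The key point is that $F_{2^k}(L,\vecU) = 0$ for all but finitely many $k$. Indeed, by \eqref{eq:eigenvalues} the $L$-component of any point of the joint spectrum of $(L,\vecU)$ with $|\eta| \in \leftclosedint M/2, 2M\rightclosedint$ is at least $\sum_{\jone} r_\jone b_\jone^\eta$, and since each $b_\jone^\eta$ is homogeneous of degree $1$ and strictly positive on $\dctr$ (hence $\sim |\eta|$ uniformly by compactness of the unit sphere) this $L$-component is $\gtrsim M$. As $\chi(|\eta|/M)$ vanishes at $\eta = 0$, we conclude that $F_M(L,\vecU)$ can be nonzero only when $c\,M \leq \max(K \cap \leftclosedint 0,\infty\rightopenint)$ for a suitable $c > 0$ (if $K \cap \leftclosedint 0,\infty \rightopenint = \emptyset$ there is nothing to prove), i.e.\ when $M \leq C_1(K)$. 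For those finitely many indices, I would apply Lemma~\ref{lem:weightedplancherel} both at exponent $r$ and at exponent $0$, and use $(1+|u|)^{2r} \leq C_r(1 + |u|^{2r})$, $\|F\|_2 \leq \|F\|_{W_2^r}$ together with the boundedness $2^k \leq C_1(K)$, to obtain
\[
\| (1+|u|)^r \, \Kern_{F_{2^k}(L,\vecU)} \|_2 \leq C_{K,r} \, 2^{k(\dtwo/2 - r)} \, \|F\|_{W_2^r}.
\]

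Summing over $k$ with $2^k \leq C_1(K)$, and using $\dtwo/2 - r > 0$ so that $\sum_{k \leq \lfloor \log_2 C_1(K)\rfloor} 2^{k(\dtwo/2-r)}$ is a convergent geometric tail, yields $\| (1+|u|)^r \, \Kern_{F(L)} \|_2 \leq C_{K,r}\|F\|_{W_2^r}$, which is the assertion after squaring. The same summability shows that the partial sums $\sum_{|k| \leq N} \Kern_{F_{2^k}(L,\vecU)}$ form a Cauchy sequence in $L^2(G)$, whose limit must be $\Kern_{F(L)}$ because $\sum_{|k| \leq N} F_{2^k}(L,\vecU) \to F(L)$ strongly on $L^2(G)$ by dominated convergence for the joint spectral measure (again using that $\{\eta = 0\}$ is null); this legitimises the interchange of the sum with the kernel. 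For non-integer $r$ one invokes Lemma~\ref{lem:weightedplancherel} directly, as it is stated for all $r \geq 0$, or interpolates between consecutive integers.

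I expect the only genuinely delicate step to be the effective-support observation of the second paragraph. Without the a priori cutoff $2^k = O(1)$, the bound of Lemma~\ref{lem:weightedplancherel} grows like $M^{\dtwo/2 - r}$, so the dyadic sum would diverge as $M \to \infty$ precisely in the range $r$ close to $\dtwo/2$ that is of interest; it is the coercivity $L \gtrsim |\vecU|$ on the joint spectrum, combined with the compactness of $\supp F$, that confines the sum to scales $M = O(1)$ and makes the geometric series converge. The remaining ingredients—the partition of unity, the passage from $|u|^r$ to $(1+|u|)^r$, and the convergence bookkeeping—are routine.
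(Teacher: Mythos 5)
Your proposal is correct and follows essentially the same route as the paper: the same dyadic partition of unity in $|\eta|$, the same coercivity observation $\sum_\jone b^\eta_\jone \langle n_\jone \rangle_\jone + |\mu|_\Sigma \gtrsim |\eta|$ to truncate the sum to scales $2^k \lesssim \max K$, and the same geometric-series summation of the bounds from Lemma~\ref{lem:weightedplancherel} using $\dtwo/2 - r > 0$, combined with the $r=0$ case to pass from $|u|^r$ to $(1+|u|)^r$. Your extra care about the null set $\{\eta = 0\}$ and the strong convergence of the partial sums is a welcome, if routine, supplement to the paper's terser statement.
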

\begin{proof}
Take $\chi \in C^\infty_c(\leftopenint 0,\infty \rightopenint)$ such that $\supp \chi \subseteq \leftclosedint 1/2,2 \rightclosedint$ and $\sum_{k \in \Z} \chi(2^{-k} t) = 1$ for all $t \in \leftopenint 0,\infty \rightopenint$. If $F_M$ is defined for all $M \in \leftopenint 0,\infty \rightopenint$ as in Lemma~\ref{lem:weightedplancherel}, then $\Kern_{F_M(L,\vecU)}$ is given by the right-hand side of \eqref{eq:kernel}, where $m$ is defined by \eqref{eq:simplermultiplier}, and moreover
\[
\sum_{\jone=1}^\done b^\eta_\jone \langle n_\jone \rangle_\jone + |\mu|_\Sigma \geq C^{-1} |\eta|
\]
for all $\eta \in \dctr$, $\mu \in \leftclosedint 0,\infty \rightopenint^\tdone$ and $n \in \N^\done$, therefore $F_M(L,\vecU) = 0$ whenever $M > 2C\max K$.
Hence, if $k_K \in \Z$ is sufficiently large so that $2^{k_K} > 2 C \max K$, then
\[F(L) = \sum_{k \in \Z , \, k \leq k_K} F_{2^{k}}(L,\vecU)\]
(with convergence in the strong sense). Consequently an estimate for $\Kern_{F(L)}$ can be obtained, via Minkowski's inequality, by summing the corresponding estimates for $\Kern_{F_{2^{k}}}(L,\vecU)$ given by Lemma~\ref{lem:weightedplancherel}. If $r < d/2$, then the series $\sum_{k \leq k_K} (2^{k})^{\dtwo/2-r}$ converges, thus
\[
\int_{G} \left| |u|^r \, \Kern_{F(L)}(z,u) \right|^2 \,dz \,du \leq C_{K,r} \|F\|_{W_2^r}^2.
\]
The conclusion follows by combining the last inequality with the corresponding one for $r = 0$.
\end{proof}

Let $|\cdot|_\delta$ be a $\delta_t$-homogeneous norm on $G$; take, e.g., $|(z,u)|_\delta = |z| + |u|^{1/2}$. Interpolation then allows us to improve the standard weighted estimate for a homogeneous sublaplacian on a stratified group.

\begin{prp}\label{prp:improvedl2estimate}
Let $F : \R \to \C$ be smooth and such that $\supp F \subseteq K$ for some compact set $K \subseteq \R$. For all $r \in \leftclosedint 0, \dtwo/2 \rightopenint$, $\alpha \geq 0$ and $\beta > \alpha + r$,
\begin{equation}\label{eq:improvedl2estimate}
\int_{G} \left| (1+|(z,u)|_\delta)^\alpha \, (1+|u|)^r \, \Kern_{F(L)}(z,u) \right|^2 \,dz \,du \leq C_{K,\alpha,\beta,r} \|F\|_{W_2^\beta}^2.
\end{equation}
\end{prp}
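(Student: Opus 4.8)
The plan is to deduce Proposition~\ref{prp:improvedl2estimate} from Proposition~\ref{prp:weightedl2} together with the classical weighted Plancherel estimate for a homogeneous sublaplacian on a stratified group, by means of complex interpolation. Recall that the Christ-Mauceri-Meda machinery provides, for every $\sigma \geq 0$, an estimate of the form
\[
\int_G \left| (1+|(z,u)|_\delta)^\sigma \, \Kern_{F(L)}(z,u) \right|^2 \,dz\,du \leq C_{K,\sigma} \|F\|_{W_2^\sigma}^2
\]
whenever $\supp F \subseteq K$ (this is, e.g., the $p=2$ endpoint behind \cite[Theorem 4.6]{martini_joint_2012}, and follows from finite propagation speed for the wave equation associated with $L$ together with the Sobolev embedding on the compact set $K$). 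So I would first record this estimate as the ``base'' inequality, valid for arbitrary nonnegative smoothness orders, with no weight on $u$ beyond what is already built into $|(z,u)|_\delta$.

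Next I would set up an analytic family of operators interpolating between the two estimates. Fix $r_0 \in \leftclosedint 0,\dtwo/2\rightopenint$ and $\beta_0 > \alpha + r_0$, and for $w$ in the strip $0 \leq \mathrm{Re}\, w \leq 1$ consider the multiplication-type family obtained by attaching the weight $(1+|(z,u)|_\delta)^{\alpha} (1+|u|)^{r(w)}$ to the kernel, where $r(w)$ moves linearly from $0$ (at $\mathrm{Re}\, w = 0$) to $r_0$ (at $\mathrm{Re}\, w = 1$), combined with a suitable power $(1+L)^{-z(w)/2}$ acting on the multiplier side so that the Sobolev order also moves linearly. More precisely, on the endpoint $\mathrm{Re}\, w = 0$ I would use the base estimate with Sobolev order $\alpha$ (and no $u$-weight), while on $\mathrm{Re}\, w = 1$ I would use Proposition~\ref{prp:weightedl2} with weight exponent $r_0$ and Sobolev order $r_0$; along each vertical line the imaginary part contributes only a factor of polynomial growth in $\mathrm{Im}\, w$, absorbed in the usual way (e.g.\ using that $\|(1+|\cdot|^2)^{is}\|_{W_2^\sigma \to W_2^\sigma}$ and analogous quantities grow at most polynomially). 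Applying Stein's complex interpolation theorem then yields, for $\theta = r/r_0 \in \leftclosedint 0,1\rightopenint$, a weight $(1+|u|)^r$ with $r = \theta r_0$ and a Sobolev order $(1-\theta)\alpha + \theta r_0 < \alpha + r$ on the $\mathrm{Re}\,w=1$ side --- after also interpolating in the $|(z,u)|_\delta$-weight one lands exactly at the claimed exponents $\alpha$, $r$, and any $\beta > \alpha + r$.

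A cleaner route, which I would probably adopt to avoid fussing with analytic families of multipliers, is to interpolate the two \emph{inequalities} directly: write $F = F \cdot \eta_0(t\,\cdot)$-type Littlewood-Paley pieces is unnecessary here since $F$ already has compact support, so instead regard $F \mapsto \Kern_{F(L)}$ as a linear map and use the fact that the two estimates say this map is bounded from $W_2^{\alpha}$ to $L^2$ with weight $(1+|(z,u)|_\delta)^\alpha$, and from $W_2^{r_0}$ to $L^2$ with weight $(1+|u|)^{r_0}(1+|(z,u)|_\delta)^{\alpha}$ (the latter obtained by multiplying the two displayed estimates and using $(1+|u|)^{r_0} \leq (1+|(z,u)|_\delta)^{2r_0}$, then adjusting constants). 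Complex interpolation between these weighted $L^2$ spaces --- which for $L^2$ with a power weight is standard, the interpolation space being again $L^2$ with the geometrically-averaged weight --- and on the source side between the Sobolev spaces $W_2^{\alpha}$ and $W_2^{r_0}$ gives \eqref{eq:improvedl2estimate} for the intermediate order $\beta = (1-\theta)\alpha + \theta r_0$; since we may take $r_0$ as close to $\dtwo/2$ as we wish and $\theta$ anywhere in $\leftclosedint 0,1\rightopenint$, every pair $(r,\beta)$ with $0 \leq r < \dtwo/2$ and $\beta > \alpha + r$ is reached (strictly $\beta$ equal to the interpolated value for a slightly larger $r_0$).

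The main obstacle, and the step deserving the most care, is the legitimacy of the interpolation: the target spaces carry weights that are not in a single fixed measure space, so one must phrase things as complex interpolation of weighted $L^2$ spaces (or equivalently insert the weight as an analytic family of multiplication operators), and check that the relevant operator family is indeed analytic and of admissible growth on the strip. Everything else --- the base weighted Plancherel estimate, which is classical for stratified groups, and Proposition~\ref{prp:weightedl2}, already proved above --- is available off the shelf, and the bookkeeping of exponents is routine once the interpolation framework is fixed.
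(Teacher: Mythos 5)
Your overall strategy --- interpolating between the classical weighted Plancherel estimate for the homogeneous norm and Proposition~\ref{prp:weightedl2} --- is exactly what the paper does; its proof consists of naming the two endpoint estimates and invoking interpolation. Your first route is essentially that argument, but the bookkeeping is off: if the endpoint at $\mathrm{Re}\,w=0$ carries the weight $(1+|(z,u)|_\delta)^{\alpha}$ and the endpoint at $\mathrm{Re}\,w=1$ carries no $\delta$-weight, the interpolated $\delta$-exponent is $(1-\theta)\alpha$, not $\alpha$; to land on $\alpha$ you must start from $\alpha/(1-\theta)$, whence the interpolated Sobolev order is $(1-\theta)\cdot\alpha/(1-\theta)+\theta r_0=\alpha+r$, matching the threshold $\beta>\alpha+r$. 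Your figure $(1-\theta)\alpha+\theta r_0<\alpha+r$ would prove something strictly stronger than the proposition, which should have been a warning sign. With that correction the first route is sound.

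The ``cleaner route'' you say you would actually adopt contains a genuine error. The endpoint ``$F\mapsto\Kern_{F(L)}$ is bounded from $W_2^{r_0}$ to $L^2$ with weight $(1+|u|)^{r_0}(1+|(z,u)|_\delta)^{\alpha}$'' is false for $\alpha>0$, and it cannot be obtained by ``multiplying the two displayed estimates'': from $\|w_1\Kern_{F(L)}\|_2\le A$ and $\|w_2\Kern_{F(L)}\|_2\le B$, Cauchy--Schwarz gives only $\|(w_1w_2)^{1/2}\Kern_{F(L)}\|_2^2\le AB$, i.e.\ the \emph{geometric mean} of the weights, not their product; reaching the product weight would require the two estimates with squared weights, which needs $2r_0<\dtwo/2$ (halving the admissible range of $r$) and a Sobolev order far above $r_0$. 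Interpolating from this false endpoint would again yield $\beta\ge(1-\theta)\alpha+r$, below the stated threshold. The paper's actual choice of endpoints is: at one end, keep the factor $(1+|u|)^{r}$ and absorb it into the homogeneous norm via $1+|u|\le C(1+|(z,u)|_\delta)^{2}$, so the standard estimate applies at the cost of $2r$ extra derivatives (i.e.\ $\beta>\alpha_1+2r$ there); at the other end, take $\alpha=0$ and order $r$ from Proposition~\ref{prp:weightedl2}; interpolating with $r$ fixed and the $\delta$-exponent moving from $\alpha_1$ down to $0$ then covers every $\beta>\alpha+r$ as $\theta\to0$. Finally, your concern about the legitimacy of the interpolation is easily dispatched: $F\mapsto\Kern_{F(L)}$ is a single fixed linear operator bounded between two pairs of Hilbert spaces ($L^2$ with power weights, and $L^2$ Sobolev spaces), so Stein--Weiss interpolation of weighted $L^2$ spaces applies directly and no analytic family is needed.
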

\begin{proof}
Note that $1+|u| \leq C (1+|(z,u)|_\delta)^2$. Hence, in the case $\alpha \geq 0$, $\beta > \alpha + 2r$, the inequality \eqref{eq:improvedl2estimate} follows by the mentioned standard estimate (see \cite[Lemma 1.2]{mauceri_vectorvalued_1990} or \cite[Theorem~2.7]{martini_joint_2012}). On the other hand, if $\alpha = 0$ and $\beta \geq r$, then \eqref{eq:improvedl2estimate} is given by Proposition~\ref{prp:weightedl2}. The full range of $\alpha$ and $\beta$ is then obtained by interpolation.
\end{proof}

We can finally prove the crucial estimate.

\begin{proof}[Proof of Proposition~\ref{prp:l1estimate}]
Take $r \in \leftopenint (\dim G)/2 + \dtwo/2-s,\dtwo/2 \rightopenint$. Then
\[s-r > (\dim G)/2 + \dtwo/2 - 2r = (\dim \fst)/2 + \dtwo-2r,\]
hence we can find $\alpha_1 > (\dim \fst)/2$ and $\alpha_2 > \dtwo-2r$ such that $s - r > \alpha_1 + \alpha_2$. Set $w_s(z,u) = (1+|(z,u)|_\delta)^\alpha \, (1+|u|)^r$. The $L^2$ estimate \eqref{eq:thel2estimate} then follows from Proposition~\ref{prp:improvedl2estimate}. On the other hand, for all $(z,u) \in G$,
\[
w_s^{-2}(z,u) \leq C_s (1+|z|)^{-2\alpha_1} \, (1+|u|)^{-\alpha_2 - 2r},
\]
and the right-hand side is integrable over $G \cong \lie{v} \times \lie{z}$ since $2\alpha_1 > \dim\fst$ and $\alpha_2 + 2r > \dtwo = \dim\ctr$. Therefore $w_s^{-1} \in L^2(G)$, and The $L^1$ estimate \eqref{eq:thel1estimate} follows from \eqref{eq:thel2estimate} and H\"older's inequality.
\end{proof}

\section{Remarks on the validity of the assumption and direct products}

In this section we do no longer suppose that $G$ and $L$ are a $2$-step stratified Lie group and a sublaplacian satisfying Assumption \AssA.

As observed in \S\ref{section:assumption}, a necessary condition for the validity of Assumption \AssA\  is that the skewadjoint endomorphism $J_\eta$ of the first layer $\lie{v}$ has constant rank for $\eta$ ranging in $\dctr = \ctr^* \setminus \{0\}$. Here we show that this condition is also sufficient when the rank is minimal.

\begin{prp}
Let $G$ be a $2$-step nilpotent Lie group, with Lie algebra $\lie{g} = \lie{v} \oplus \lie{z}$, and let $\langle \cdot,\cdot \rangle$ be an inner product on $\lie{v}$. Suppose that the skewadjoint endomorphism $J_\eta$ of $\lie{v}$ has rank $2$ for all $\eta \in \dctr$. Then $G$ satisfies Assumption \AssA\  with the sublaplacian $L$ associated to the given inner product, and also with any other sublaplacian associated to an inner product on a complement of $\lie{z}$.

Let moreover $G_\C$ be the complexification of $G$, considered as a real $2$-step group, with Lie algebra $\lie{g}_\C = \lie{v}_\C \oplus \lie{z}_\C$, and let $\lie{v}_\C$ be endowed with the real inner product induced by the inner product on $\lie{v}$. Then $G_\C$, with the sublaplacian associated to the given inner product, satisfies Assumption \AssA.
\end{prp}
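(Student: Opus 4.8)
The plan is to verify Assumption~\AssA\ directly, choosing the coarsest possible decomposition of the first layer, namely $\done = 1$ and $\lie{v}_1 = \lie{v}$.

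For the first two assertions this is essentially immediate. Recall that the rank of $J_\eta$ is an algebraic invariant of $G$: it equals the rank of the alternating form $(z,z') \mapsto \eta([z,z'])$ on $\lie{g}/\lie{z}$, so it does not depend on the inner product on $\lie{v}$, nor on the choice of the linear complement of $\lie{z}$ used to define the sublaplacian. Hence in all cases under consideration $J_\eta$ (or its analogue for another complement) has rank $2$ for every $\eta \neq 0$. Now a skewadjoint operator of rank $2$ on a real inner product space is orthogonally equivalent to $\left(\begin{smallmatrix} 0 & -b \\ b & 0 \end{smallmatrix}\right) \oplus 0$ for some $b > 0$, so $J_\eta^2$ has rank $2$ and $-b^2$ as its only nonzero eigenvalue; thus Assumption~\AssA\ holds with $r_1 = 1$, and likewise for any other complement and inner product.

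For the complexification I would take $\done = 1$, $\lie{v}_1 = \lie{v}_\C$, $r_1 = 2$. Every nonzero real-linear functional on $\lie{z}_\C$ is uniquely of the form $\eta_\C = \operatorname{Re}\circ\tilde\lambda$ for a nonzero complex-linear $\tilde\lambda : \lie{z}_\C \to \C$; write $\tilde\lambda|_{\lie{z}} = \eta_1 + i\eta_2$ with $\eta_1,\eta_2 \in \lie{z}^*$. Fixing an orthonormal basis of $\lie{v}$ — which is a unitary $\C$-basis of $\lie{v}_\C$ and, together with its $i$-multiples, a real orthonormal basis of $\lie{v}_\C$ — a short computation from the defining relation gives $J^\C_{\eta_\C} = \left(\begin{smallmatrix} J_{\eta_1} & -J_{\eta_2} \\ -J_{\eta_2} & -J_{\eta_1} \end{smallmatrix}\right)$. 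This operator anticommutes with the complex structure of $\lie{v}_\C$, so $(J^\C_{\eta_\C})^2$ commutes with it and corresponds, under that structure, to the Hermitian operator $-B^*B$ on $\lie{v}_\C$, where $B := J_{\eta_1} + i J_{\eta_2}$ is a complex skewsymmetric matrix (since each $J_{\eta_j}$ is); in particular the eigenvalues of $(J^\C_{\eta_\C})^2$ are exactly those of $-B^*B$, each with doubled multiplicity.

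It then remains to prove: (i) $\operatorname{rank}_\C B = 2$ whenever $\tilde\lambda \neq 0$; and (ii) for every rank-$2$ complex skewsymmetric matrix $B$, the matrix $B^*B$ has a single nonzero eigenvalue. For (i): the hypothesis says $\omega_\eta \wedge \omega_\eta = 0$ in $\Lambda^4\lie{v}^*$ for all $\eta$, where $\omega_\eta = \eta\circ[\cdot,\cdot]$; polarizing this quadratic identity over $\R$ gives $\omega_\eta \wedge \omega_{\eta'} = 0$ for all $\eta,\eta'$, whence $(\omega_{\eta_1} + i\omega_{\eta_2}) \wedge (\omega_{\eta_1} + i\omega_{\eta_2}) = 0$, i.e.\ the complex $2$-form $\omega_{\eta_1} + i\omega_{\eta_2}$ (whose matrix in the chosen basis is $-B$) is decomposable; since $[\lie{v},\lie{v}] = \lie{z}$ forces $[\lie{v}_\C,\lie{v}_\C] = \lie{z}_\C$, this form is nonzero as soon as $\tilde\lambda \neq 0$, so $\operatorname{rank}_\C B = 2$. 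For (ii): write $B = u v^T - v u^T$ with $u,v \in \C^n$ linearly independent ($n = \dim_\R\lie{v}$); a direct computation gives $B^*B = \left(\|u\|^2\|v\|^2 - |\langle u,v\rangle|^2\right)\Pi$, where $\Pi$ is the orthogonal projection onto the span of $\bar u$ and $\bar v$, a rank-$2$ subspace, and the scalar is positive (alternatively one may invoke the canonical form of complex skewsymmetric matrices). Consequently $(J^\C_{\eta_\C})^2$ has rank $4 = 2r_1$ and a unique nonzero eigenvalue for every $\eta_\C \neq 0$, and Assumption~\AssA\ holds for $G_\C$. The only genuinely nonroutine point is (ii): for a \emph{generic} rank-$2$ complex matrix the two nonzero singular values differ, and it is precisely the skewsymmetry of $B$ that makes them coincide, which is what guarantees that complexification preserves the ``unique nonzero eigenvalue'' requirement.
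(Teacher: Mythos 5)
Your proof is correct, and the treatment of the complexification takes a genuinely different route from the paper's. The paper works entirely with the real picture: it first proves, by a rank-additivity contradiction, that $R(J_{\eta_R})$ and $R(J_{\eta_I})$ must intersect nontrivially, and then runs a case analysis on $\dim\bigl(R(J_{\eta_R})\cap R(J_{\eta_I})\bigr)\in\{2,1\}$ (plus the degenerate cases $\eta_R=0$ or $\eta_I=0$), computing $\tilde J_\eta^2$ explicitly on a basis of the span of $x$, $ix$, $J_{\eta_R}x$, $J_{\eta_I}x$ in the hardest case. You instead exploit the complex structure: since $\tilde J_{\eta}$ anticommutes with multiplication by $i$, its square is $\C$-linear and equals $B\bar B=-BB^{*}$ for the complex skew-symmetric matrix $B=J_{\eta_1}\pm iJ_{\eta_2}$ (you wrote $-B^{*}B$, a harmless transposition with the same rank and nonzero spectrum), so everything reduces to two clean facts about $B$. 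Your point (i) — decomposability of $\omega_{\eta_1}+i\omega_{\eta_2}$ via polarization of $\omega_\eta\wedge\omega_\eta=0$ — is essentially the paper's intersection lemma in $2$-form language, while your point (ii) — that the nonzero singular values of a complex skew-symmetric matrix come in equal pairs, so a rank-$2$ such $B$ has $B^{*}B$ equal to a positive multiple of a rank-$2$ projection — replaces the paper's hands-on verification and correctly isolates where skew-symmetry is used. The net effect is a shorter, more structural argument with no case analysis; the paper's version is more elementary and self-contained but longer. The first part of the statement (independence of the complement and inner product) is handled the same way in both: the rank of $J_\eta$ is the rank of the alternating form $\eta([\cdot,\cdot])$ on $\lie{g}/\lie{z}$, and a rank-$2$ real skewadjoint map has a unique nonzero eigenvalue of its square by the normal form.
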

\begin{proof}
From the normal form for skewadjoint endomorphisms, it follows immediately that, if $J_\eta$ has rank $2$, then $J_\eta^2$ has exactly one nonzero eigenvalue, and Assumption \AssA\  is trivially verified. Moreover, if $\lie{v}$ is identified with $\lie{g}/\lie{z}$, then $\ker J_\eta$ corresponds to the subspace
\[
N_\eta = \{ x + \lie{z} \tc x \in \lie{g} \text{ and } \eta([x,x']) = 0 \text{ for all } x' \in \lie{g}\}
\]
of $\lie{g}/\lie{z}$; hence the rank condition on $J_\eta$ can be rephrased by saying that $N_\eta$ has codimension $2$ for all $\eta \in \dctr$, and this condition does not depend on the sublaplacian $L$ chosen on $G$.

Let $R(J_\eta)$ denote the range of $J_\eta$. We show now that, for all $\eta,\eta' \in \dctr$, the intersection $R(J_\eta) \cap R(J_{\eta'})$ is nontrivial. If it were trivial, since $J_{\eta+\eta'} = J_\eta + J_\eta'$, we would have $\ker J_{\eta+\eta'} = \ker J_\eta \cap \ker J_{\eta'}$, hence
\[R(J_{\eta+\eta'}) = (\ker J_{\eta+\eta'})^\perp = R(J_\eta) \oplus R(J_{\eta'}),\]
thus $J_{\eta+\eta'}$ would have rank $4$, contradiction.

Consider now the complexification $\lie{g}_\C = \lie{g} \oplus i \lie{g}$. Via the linear identifications $\lie{g}_\C = \lie{g} \times \lie{g}$, $\lie{z}^*_\C = \lie{z}^* \times \lie{z}^*$, $\lie{v}_\C = \lie{v} \times \lie{v}$, the skewsymmetric endomorphism $\tilde J_\eta$ of the first layer $\lie{v}_\C$ corresponding to the element $\eta =(\eta_R,\eta_I) \in \lie{z}_\C^*$ is given by
\begin{equation}\label{eq:complexification}
\tilde J_\eta (x_R,x_I) = (J_{\eta_R} x_R + J_{\eta_I} x_I, J_{\eta I} x_R - J_{\eta_R} x_I).
\end{equation}

Take now $\eta = (\eta_R,\eta_I) \in \dctr_\C$; we want to show that $\tilde J_\eta^2$ has rank $4$ and a unique nonzero eigenvalue. We distinguish several cases.

If $\eta_I = 0$, then $\tilde J_\eta = J_{\eta_R} \times (-J_{\eta_R})$, hence $\tilde J_\eta^2 = J_{\eta_R}^2 \times J_{\eta_R}^2$ satisfies the condition. The same argument gives the conclusion in the case $\eta_R = 0$.

If both $\eta_R,\eta_I \in \dctr$, then $R(J_{\eta_R}) \cap R(J_{\eta_I}) \neq 0$, hence $\dim(R(J_{\eta_R}) \cap R(J_{\eta_I}))$ is either $2$ or $1$. In the first case, $R(J_{\eta_R}) = R(J_{\eta_I})$, so $J_{\eta_R}$ and $J_{\eta_I}$ commute and \eqref{eq:complexification} implies that
\[
\tilde J_\eta^2 = (J_{\eta_R}^2 + J_{\eta_I}^2) \times (J_{\eta_R}^2 + J_{\eta_I}^2);
\]
since $J_{\eta_R}^2$ and $J_{\eta_I}^2$ are negative multiples of the same orthogonal projection, the conclusion follows.

Suppose now that $R(J_{\eta_R}) \cap R(J_{\eta_I}) = \R x$ for some unit vector $x \in \lie{v}$, and set $y_R = J_{\eta_R}x$, $y_I = J_{\eta_I}x$, $b_R = |y_R|$, $b_I = |y_I|$; in particular $J_{\eta_R}^2 x = -b_R^2 x$ and $J_{\eta_I}^2 x = -b_I^2 x$. Since $J_{\eta_R}$ and $J_{\eta_I}$ are skewadjoint and of rank $2$, necessarily $J_{\eta_R} x, J_{\eta_I}x \in x^\perp$ and $J_{\eta_R}(x^\perp) = J_{\eta_I}(x^\perp) = \R x$, therefore $J_{\eta_R} J_{\eta_I} x$ and $J_{\eta_I} J_{\eta_R} x$ are both multiples of $x$; on the other hand,
\[
\langle J_{\eta_R} J_{\eta_I} x, x \rangle = - \langle J_{\eta_I} x, J_{\eta_R} x \rangle = \langle x, J_{\eta_I} J_{\eta_R} x \rangle,
\]
hence $J_{\eta_R} J_{\eta_I} x = J_{\eta_I} J_{\eta_R} x$. This identity, together with \eqref{eq:complexification}, allows us easily to show that
\begin{gather*}
\tilde J_\eta(x,0) = (y_R,y_I), \qquad \tilde J_\eta(y_R,y_I) = -(b_R^2 + b_I^2) (x, 0),\\
\tilde J_\eta(0,x) = (y_I,-y_R), \qquad \tilde J_\eta(y_I,-y_R) = -(b_R^2 + b_I^2) (0, x).
\end{gather*}
Note that $b_R^2 + b_I^2$ is the squared norm of both $(y_R,y_I)$ and $(y_I,-y_R)$. Hence we would be done if we knew that $R(\tilde J_\mu)$ coincides with the linear span $W$ of $(x,0)$, $(0,x)$, $(y_R,y_I)$, $(y_I,-y_R)$.

In fact, we just need to show that $R(\tilde J_\eta)$ is contained in $W$, or equivalently, that $W^\perp$ is contained in $\ker \tilde J_\eta$. On the other hand, if $v = (v_R,v_I) \in W^\perp$, then $v_R,v_I \in x^\perp$ and moreover
\[\langle v_R, y_R \rangle + \langle v_I, y_I \rangle = 0, \qquad \langle v_R, y_I \rangle - \langle v_I, y_R \rangle = 0,\]
hence $J_{\eta_R} v_R, J_{\eta_R} v_I, J_{\eta_I} v_R, J_{\eta_I} v_I \in \R x$, and
\[\langle J_{\eta_R} v_R, x \rangle = - \langle v_R, y_R \rangle = \langle v_I, y_I \rangle = -\langle J_{\eta_I} v_I, x \rangle,\]
\[\langle J_{\eta_I} v_R, x \rangle = - \langle v_R, y_I \rangle = - \langle v_I, y_R \rangle = \langle J_{\eta_R} v_I, x \rangle,\]
therefore $J_{\eta_R} v_R = -J_{\eta_I} v_I$ and $J_{\eta_I} v_R = J_{\eta_R} v_I$, from which it follows immediately that $\tilde J_\eta(v_R,v_I) = 0$.
\end{proof}

The next proposition shows how groups and sublaplacians satisfying Assumption \AssA\ may be ``glued together'', so to give a higher-dimensional group and a sublaplacian that satisfy Assumption \AssA\  too.

\begin{prp}
Suppose that, for $j=1,2$, the sublaplacian $L_j$ on the $2$-step stratified Lie group $G_j$ satisfies Assumption \AssA. Suppose further that the centers of $G_1$ and $G_2$ have the same dimension. Let $G$ be the quotient of $G_1 \times G_2$ given by any linear identification of the respective centers, and let $L = L_1^\sharp + L_2^\sharp$, where $L_j^\sharp$ is the pushforward of $L_j$ to $G$. Then the sublaplacian $L$ on the group $G$ satisfies Assumption \AssA.
\end{prp}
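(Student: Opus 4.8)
The plan is to exhibit the Lie algebra of $G$ explicitly and then to verify Assumption \AssA\ summand by summand. Write $\lie{g}_i = \lie{v}_i \oplus \lie{z}_i$ for the stratified Lie algebra of $G_i$, with $\lie{z}_i$ its center, and let $\iota : \lie{z}_1 \to \lie{z}_2$ be the fixed linear identification used to form $G$. Then the Lie algebra of $G$ may be identified with $\lie{g} = \lie{v} \oplus \lie{z}$, where $\lie{v} = \lie{v}_1 \oplus \lie{v}_2$, $\lie{z} = \lie{z}_1$, the bracket of any element of $\lie{v}_1$ with any element of $\lie{v}_2$ vanishes, $[v,v']_{\lie{g}} = [v,v']_{\lie{g}_1}$ for $v,v' \in \lie{v}_1$, and $[v,v']_{\lie{g}} = \iota^{-1}\bigl([v,v']_{\lie{g}_2}\bigr)$ for $v,v' \in \lie{v}_2$. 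The first thing to check is that $\lie{z}$ really is the center of $\lie{g}$: since $\lie{v}_i \cap \lie{z}_i = \{0\}$, no nonzero element of $\lie{v}_i$ is central in $\lie{g}_i$, and as the brackets do not mix $\lie{v}_1$ and $\lie{v}_2$, this forces the center of $\lie{g}$ to equal $\lie{z}$. Since moreover $L = L_1^\sharp + L_2^\sharp$ corresponds to the inner product on $\lie{v}$ that makes $\lie{v}_1$ and $\lie{v}_2$ orthogonal and restricts to the given inner products on $\lie{v}_1$ and $\lie{v}_2$, the framework of \S\ref{section:assumption} applies to $G$ and $L$.

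Next I would compute the endomorphisms $J_\eta$ attached to $G$ and $L$. Writing $\eta^\iota = \eta \circ \iota^{-1} \in \lie{z}_2^*$ for $\eta \in \lie{z}^* = \lie{z}_1^*$, the defining relation $\langle J_\eta v, v' \rangle = \eta([v,v']_{\lie{g}})$, together with the above block structure of the bracket and of the inner product, gives
\[
J_\eta = J^{(1)}_\eta \oplus J^{(2)}_{\eta^\iota}
\]
as a block-diagonal endomorphism of $\lie{v} = \lie{v}_1 \oplus \lie{v}_2$, where $J^{(i)}$ denotes the family attached to $G_i$ and $L_i$. Since $\iota$ is a linear isomorphism, $\eta \mapsto \eta^\iota$ is a bijection of $\dctr$ onto $\lie{z}_2^* \setminus \{0\}$.

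Finally I would assemble the decomposition required by Assumption \AssA. By that assumption for $G_1$ and for $G_2$ there are orthogonal decompositions $\lie{v}_1 = \lie{v}_{1,1} \oplus \dots \oplus \lie{v}_{1,p}$ and $\lie{v}_2 = \lie{v}_{2,1} \oplus \dots \oplus \lie{v}_{2,q}$ with associated positive integers $r^{(1)}_1,\dots,r^{(1)}_p$ and $r^{(2)}_1,\dots,r^{(2)}_q$ having the stated properties. I would take for $G$ the concatenated orthogonal decomposition $\lie{v} = \lie{v}_{1,1} \oplus \dots \oplus \lie{v}_{1,p} \oplus \lie{v}_{2,1} \oplus \dots \oplus \lie{v}_{2,q}$ together with the concatenated list of integers. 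For a summand $\lie{v}_{1,a} \subseteq \lie{v}_1$, with orthogonal projection $P$, the block-diagonal form of $J_\eta$ gives $J_\eta P = J^{(1)}_\eta P$ and $J_\eta^2 P = (J^{(1)}_\eta)^2 P$, so commutation with $P$, rank $2 r^{(1)}_a$ and uniqueness of the nonzero eigenvalue, for every $\eta \in \dctr$, follow at once from Assumption \AssA\ for $G_1$; for a summand $\lie{v}_{2,b} \subseteq \lie{v}_2$ the same quantities equal $J^{(2)}_{\eta^\iota} P$ and $(J^{(2)}_{\eta^\iota})^2 P$, and since $\eta \in \dctr$ precisely when $\eta^\iota \in \lie{z}_2^* \setminus \{0\}$, the desired properties follow from Assumption \AssA\ for $G_2$. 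This establishes Assumption \AssA\ for $G$ and $L$. The argument is essentially bookkeeping; the only points that need care are checking that the center of the quotient is exactly $\lie{z}$ — so that Assumption \AssA\ is even meaningful for $G$ — and carrying the identification $\iota$ (and its transpose) correctly when relating $J_\eta$ on $\lie{v}_2$ to the endomorphisms of $G_2$, which are parametrized by $\lie{z}_2^*$ rather than by $\lie{z}^*$. I do not expect a genuine obstacle.
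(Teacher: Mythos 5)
Your proposal is correct and follows essentially the same route as the paper: identify the Lie algebra of the quotient, observe that $J_\eta$ splits as the block-diagonal sum $J^{(1)}_\eta \oplus J^{(2)}_{\eta^\iota}$ (the paper writes this as $J_{1,\phi^*\eta}\times J_{2,\eta}$, parametrizing by $\lie{z}_2^*$ instead of $\lie{z}_1^*$), and concatenate the two orthogonal decompositions. The extra checks you flag (that the center of the quotient is exactly $\lie{z}$, and that $\eta\neq 0$ iff $\eta^\iota\neq 0$) are correct and are left implicit in the paper.
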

\begin{proof}
Let $\lie{g}_j$ be the Lie algebra of $G_j$, and let $\fst_j$ and $\langle \cdot, \cdot \rangle_j$ be the linear complement of the center $\ctr_j$  and the inner product on $\fst_j$ determined by the sublaplacian $L_j$; denote moreover by $J_{j,\eta}$ the skewadjoint endomorphism of $\fst_j$ determined by $\eta \in \ctr_j^*$.

The linear identification of the centers of $G_1$ and $G_2$ corresponds to a linear isomorphism $\phi : \lie{z}_1 \to \lie{z}_2$, and the Lie algebra $\lie{g}$ of the quotient $G$ can be identified with $\lie{v}_1 \times \lie{v}_2 \times \lie{z}_2$, with Lie bracket
\[
[(v_1,v_2,z),(v_1',v_2',z')] = (0,0,\phi([v_1,v_1']) + [v_2,v_2']).
\]
Then the sublaplacian $L$ on $G$ corresponds to the inner product $\langle \cdot, \cdot \rangle$ on $\lie{v}_1 \times \lie{v}_2$ defined by
\[
\langle (v_1,v_2) , (v_1',v_2') \rangle = \langle v_1, v_1' \rangle_1 + \langle v_2, v_2' \rangle_2.
\]
In particular, if $\phi^* : \lie{z}_2^* \to \lie{z}_1^*$ denotes the adjoint map of $\phi : \lie{z}_1 \to \lie{z}_2$, then it is easily checked that the skewadjoint endomorphism of the first layer $\lie{v}_1 \times \lie{v}_2$ of $\lie{g}$ corresponding to an element $\eta$ of the dual $\lie{z}_2^*$ of the center of $\lie{g}$ is given by $J_\eta = J_{1,\phi^* \eta} \times J_{2,\eta}$. Hence the orthogonal decomposition of $\lie{v}_1 \times \lie{v}_2$ giving the ``simultaneous diagonalization'' of the $J_\eta$ for all $\eta \in \dctr_2$ (in the sense of \S\ref{section:assumption}) is simply obtained by juxtaposing the corresponding orthogonal decompositions of $\lie{v}_1$ and $\lie{v}_2$.
\end{proof}

Note that the direct product $G_1 \times G_2$ itself need not satisfy Assumption \AssA, even if the factors $G_1$ and $G_2$ do. However a functional-analytic argument, as in \cite[\S 4]{mller_spectral_1994}, can be used to deal with that case.

The key step in our proof of Theorem~\ref{thm:theorem} is the weighted $L^2$ estimate \eqref{eq:thel2estimate} of Proposition~\ref{prp:l1estimate}. Let us now turn the conclusion of Proposition~\ref{prp:l1estimate} into an assumption on a homogeneous sublaplacian $L$ on a stratified group $G$.

\begin{assB}
For all $s > t$ there exist a weight $w_s : G \to \leftclosedint 1,\infty \rightopenint$ such that $w_s^{-1} \in L^2(G)$ and, for all compact sets $K \subseteq \R$ and all Borel functions $F : \R \to \C$ with $\supp F \subseteq K$,
\begin{equation}\label{eq:assl2estimate}
\|w_s \, \Kern_{F(L)} \|_{L^2(G)} \leq C_{K,s} \|F\|_{W_2^s(\R)}.
\end{equation}
\end{assB}

Our Proposition~\ref{prp:l1estimate} can then be rephrased by saying that Assumption \AssA\ implies Assumption \AssB{t} for $t = (\dim G)/2$. Note, on the other hand, that Assumption \AssB{t} makes sense for homogeneous sublaplacians on stratified groups $G$ of step other than $2$. In fact, every homogeneous sublaplacian on a stratified group of homogeneous dimension $Q$ satisfies Assumption \AssB{t} for $t = Q/2$, by \cite[Lemma~1.2]{mauceri_vectorvalued_1990} (suitably extended so to admit multipliers that do not vanish in a neighborhood of the origin of $\R$; see, e.g., \cite[Lemma~3.1]{mller_spectral_1994} for the $1$-dimensional case, and \cite[Theorem~2.7]{martini_joint_2012} for the higher-dimensional case).

Differently from Assumption \AssA, the new Assumption \AssB{t} ``behaves well'' under direct products.

\begin{prp}\label{prp:product}
For $j=1,\dots,n$, let $L_j$ be a homogeneous sublaplacian on a stratified Lie group $G_j$ satisfying Assumption \AssB{t_j} for some $t_j > 0$. Let $G = G_1 \times \dots \times G_n$ and $L = L_1^\sharp + \dots + L_n^\sharp$, where $L_j^\sharp$ is the pushforward to $G$ of the operator $L_j$. Then the sublaplacian $L$ on $G$ satisfies Assumption \AssB{t}, where $t = t_1+\dots+t_n$.
\end{prp}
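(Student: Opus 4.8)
The plan is to reduce everything, by induction on $n$ (split $G = G_1 \times (G_2 \times \cdots \times G_n)$ and use that, by the inductive hypothesis, $L_2^\sharp + \cdots + L_n^\sharp$ satisfies $(\mathrm{B}_{t_2 + \cdots + t_n})$ on $G_2 \times \cdots \times G_n$), to the case $n = 2$. So assume $G = G_1 \times G_2$, $L = L_1^\sharp + L_2^\sharp$, with $L_1$ on $G_1$ satisfying $(\mathrm{B}_{t_1})$ and $L_2$ on $G_2$ satisfying $(\mathrm{B}_{t_2})$. A routine density argument (approximate a Borel $F$ with $\supp F \subseteq K$ and $\|F\|_{W_2^s} < \infty$ in $W_2^s$ by functions in $C^\infty_c$ supported in a fixed compact neighbourhood of $K$, and use the ensuing strong operator convergence) reduces the weighted estimate to $F \in C^\infty_c(\mathbb{R})$; fix such an $F$, put $R = \max\{|x| : x \in K\}$, and fix once and for all $\psi \in C^\infty_c(\mathbb{R})$ with $\psi \equiv 1$ on $[0, R]$.

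The crux is a \emph{slicing formula} for the convolution kernel on the product. For $g_1 \in G_1$ set $\Theta_{g_1}(\lambda) = \Kern_{[F(\cdot + \lambda)\,\psi](L_1)}(g_1)$ for $\lambda \in \mathbb{R}$; then $\Theta_{g_1}$ is smooth and, since $\supp \Theta_{g_1} \subseteq \overline{\supp F - \supp \psi}$, it is supported in a compact set $K_2$ independent of $g_1$. Because $L_1^\sharp, L_2^\sharp \ge 0$, one has $\lambda_1 \le \lambda_1 + \lambda_2$ whenever $F(\lambda_1 + \lambda_2) \ne 0$ and $\lambda_1, \lambda_2 \ge 0$, so inserting $\psi$ is harmless on the joint spectrum, and
\[
\Kern_{F(L)}(g_1, g_2) = \Kern_{\Theta_{g_1}(L_2)}(g_2) \qquad \text{for all } (g_1, g_2) \in G .
\]
This is an instance of the joint functional calculus of the strongly commuting pair $(L_1^\sharp, L_2^\sharp)$ on the product; it is transparent on the dense class of multipliers spanned by $\{e^{-t\cdot} : t > 0\}$, where it reduces to the factorisation $\Kern_{e^{-tL}} = \Kern_{e^{-tL_1}} \otimes \Kern_{e^{-tL_2}}$ of the heat kernels, and extends from there by the continuity of $m \mapsto \Kern_{m(L_j)}$ on Schwartz functions.

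Granting this, let $s_1 > t_1$, $s_2 > t_2$ and let $w^{(j)}_{s_j}$ be the weights given by $(\mathrm{B}_{t_j})$; set $W = w^{(1)}_{s_1} \otimes w^{(2)}_{s_2}$. By the slicing formula, Tonelli's theorem, and $(\mathrm{B}_{t_2})$ applied to $\Theta_{g_1}$ (supported in the fixed compact $K_2$),
\[
\|W \, \Kern_{F(L)}\|_{L^2(G)}^2 = \int_{G_1} |w^{(1)}_{s_1}(g_1)|^2 \, \bigl\|w^{(2)}_{s_2} \, \Kern_{\Theta_{g_1}(L_2)}\bigr\|_{L^2(G_2)}^2 \, dg_1 \le C \int_{G_1} |w^{(1)}_{s_1}(g_1)|^2 \, \|\Theta_{g_1}\|_{W_2^{s_2}}^2 \, dg_1 .
\]
Applying the continuous functional $m \mapsto \Kern_{m(L_1)}(g_1)$ to the identity $\int_\mathbb{R} [F(\cdot + \lambda)\,\psi]\, e^{-i\xi\lambda}\, d\lambda = \widehat{F}(\xi) \, [e^{i\xi\cdot}\,\psi]$ (of functions of the first variable; the interchange is justified by Fubini and the compact support of $\psi$) gives $\widehat{\Theta_{g_1}}(\xi) = \widehat{F}(\xi) \, \Kern_{[e^{i\xi\cdot}\,\psi](L_1)}(g_1)$. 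Hence, by Plancherel, Tonelli, and $(\mathrm{B}_{t_1})$ applied to $e^{i\xi\cdot}\,\psi$ (supported in the fixed compact $\supp \psi$), the right-hand side above equals
\[
C \int_\mathbb{R} (1 + \xi^2)^{s_2} \, |\widehat{F}(\xi)|^2 \, \bigl\|w^{(1)}_{s_1} \, \Kern_{[e^{i\xi\cdot}\,\psi](L_1)}\bigr\|_{L^2(G_1)}^2 \, d\xi \le C \int_\mathbb{R} (1 + \xi^2)^{s_2} \, |\widehat{F}(\xi)|^2 \, \|e^{i\xi\cdot}\,\psi\|_{W_2^{s_1}}^2 \, d\xi ,
\]
and since $\|e^{i\xi\cdot}\,\psi\|_{W_2^{s_1}} \le C_{s_1} (1 + \xi^2)^{s_1/2}$ (the Fourier transform of $e^{i\xi\cdot}\psi$ is a translate of $\widehat{\psi}$; use Peetre's inequality), this is $\le C \|F\|_{W_2^{s_1 + s_2}}^2$.

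To conclude: given $s > t = t_1 + t_2$, pick $s_1 > t_1$ and $s_2 > t_2$ with $s_1 + s_2 \le s$, and set $w_s = w^{(1)}_{s_1} \otimes w^{(2)}_{s_2} : G \to [1, \infty)$. Then $w_s^{-1} = (w^{(1)}_{s_1})^{-1} \otimes (w^{(2)}_{s_2})^{-1} \in L^2(G_1) \otimes L^2(G_2) = L^2(G)$, and the estimate just proved, together with $\|F\|_{W_2^{s_1 + s_2}} \le \|F\|_{W_2^s}$, yields $\|w_s \, \Kern_{F(L)}\|_{L^2(G)} \le C_{K,s} \|F\|_{W_2^s}$ for every Borel $F$ with $\supp F \subseteq K$; this is $(\mathrm{B}_t)$ for $L$ on $G$. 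The one genuinely delicate point is the slicing formula: one must make sense of the partial functional calculus on the direct product, check that the kernels involved are genuine (Schwartz) functions rather than distributions, and justify the Fubini-type interchanges — but, as noted, it is forced by the factorisation of the heat kernels and is stable under the relevant limits.
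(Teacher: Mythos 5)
Your proof is correct, and it establishes the same estimate by exploiting the same underlying fact -- the kernel of a joint multiplier on a direct product factors as a tensor product -- but the bookkeeping is genuinely different from the paper's. The paper treats all $n$ factors at once: from $\Kern_{(F_1 \otimes \dots \otimes F_n)(L_1^\sharp,\dots,L_n^\sharp)} = \Kern_{F_1(L_1)} \otimes \dots \otimes \Kern_{F_n(L_n)}$ it deduces that $H \mapsto \Kern_{(\phi H)(L_1^\sharp,\dots,L_n^\sharp)}$ is a tensor product of bounded Hilbert-space operators, hence bounded from the Sobolev space with dominating mixed smoothness $S^{(s_1,\dots,s_n)}_2 W(\R^n) = W_2^{s_1}(\R) \otimes \dots \otimes W_2^{s_n}(\R)$ into the weighted $L^2$ space, and then concludes via the embedding $W_2^s(\R^n) \hookrightarrow S^{(s_1,\dots,s_n)}_2 W(\R^n)$ for $s = s_1 + \dots + s_n$ together with the bound $\|F(\lambda_1+\dots+\lambda_n)\,\eta_K(\lambda_1)\cdots\eta_K(\lambda_n)\|_{W_2^s(\R^n)} \leq C \|F\|_{W_2^s(\R)}$. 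You instead reduce to $n=2$ by induction, slice the kernel, and compute the partial Fourier transform of the specific multiplier $F(\lambda_1+\lambda_2)\psi(\lambda_1)$ explicitly; this hands you the factor $(1+\xi^2)^{s_1+s_2}|\widehat F(\xi)|^2$ directly and bypasses both the mixed-smoothness spaces and the $n$-dimensional Sobolev norm of the extension $H$. What your route buys is elementarity (Peetre's inequality replaces the abstract tensor-product boundedness); what it costs is the slicing formula you rightly flag as the delicate point. That formula is, however, precisely the content of the identity the paper quotes from \cite[Corollary~5.5]{martini_spectral_2011}: once one knows $\Kern_{(H_1 \otimes H_2)(L_1^\sharp,L_2^\sharp)} = \Kern_{H_1(L_1)} \otimes \Kern_{H_2(L_2)}$ and that Schwartz multipliers of $L_1$ have kernels depending continuously on the multiplier, your identity $\Kern_{F(L)}(g_1,g_2) = \Kern_{\Theta_{g_1}(L_2)}(g_2)$ follows by linearity and density for $H(\lambda_1,\lambda_2) = F(\lambda_1+\lambda_2)\psi(\lambda_1)$, so the heat-kernel heuristic can be replaced by a citation of the same result the paper relies on. Both proofs end with the same splitting $s = s_1+\dots+s_n$, $s_j > t_j$, and the same product weight $w_s = w_{1,s_1} \otimes \dots \otimes w_{n,s_n}$.
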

\begin{proof}
Take $s > t$. Then we can choose $s_1,\dots,s_n$ such that $s_1 > t_1,\dots,s_n> t_n$ and $s = s_1+\dots+s_n$. Let then $w_{j,s_j} : G_j \to \leftclosedint 1,\infty \rightopenint$ be the weight corresponding to $s_j$ given by Assumption \AssB{t} on $G_j$ and $L_j$, for $j=1,\dots,n$. In particular $w_{j,s_j}^{-1} \in L^2(G_j)$ and, for all $\phi \in C^\infty_c(\R)$, the map $F \mapsto \Kern_{(\phi F)(L_j)}$ is a bounded linear map of Hilbert spaces $W_2^{s_j}(\R) \to L^2(G_j,w_{j,s_j}^2(x_j) \,dx_j)$, where $dx_j$ denotes the Haar measure on $G_j$.

The operators $L_1^\sharp,\dots,L_n^\sharp$ on $G$ are essentially self-adjoint and commute strongly, that is, they admit a joint spectral resolution and a joint functional calculus on $L^2(G)$, and moreover, for all bounded Borel functions $F_1,\dots,F_n : \R \to \C$,
\[
\Kern_{(F_1 \otimes \dots \otimes F_n)(L_1^\sharp,\dots,L_n^\sharp)} = \Kern_{F_1(L_1)} \otimes \dots \otimes \Kern_{F_n(L_n)}
\]
\cite[Corollary~5.5]{martini_spectral_2011}. Hence, for all $\phi_1,\dots,\phi_n \in C^\infty_c(\R)$, if $\phi = \phi_1 \otimes \dots \otimes \phi_n$, then the map $H \mapsto \Kern_{(\phi H)(L_1^\sharp,\dots,L_n^\sharp)}$ is the tensor product of the maps $F_j \mapsto \Kern_{(\phi_j F_j)(L_j)}$. Since these maps are bounded $W_2^{s_j}(\R) \to L^2(G_j,w_{j,s_j}^2(x_j) \,dx_j)$, the map $H \mapsto \Kern_{(\phi H)(L_1^\sharp,\dots,F_n^\sharp)}$ is bounded $S_2^{(s_1,\dots,s_n)}W(\R^n) \to L^2(G,w_s^2(x) \,dx)$, where $S^{(s_1,\dots,s_n)}_2 W(\R^n) = W_2^{s_1}(\R) \otimes \dots \otimes W_2^{s_n}(\R)$ is the $L^2$ Sobolev space with dominating mixed smoothness \cite{schmeisser_recent_2007} of order $(s_1,\dots,s_n)$, and $w_s = w_{1,s_1} \otimes \dots \otimes w_{n,s_n}$ is the product weight on $G$. In particular, for all compact sets $K \subseteq \R$, if we choose the cutoffs $\phi_j \in C^\infty_c(\R)$ so that $\phi_j|_K = 1$, then we deduce that, for all $H : \R^n \to \C$ with $\supp H \subseteq K^n$,
\[
\|w_s \, \Kern_{H(L_1^\sharp,\dots,L_n^\sharp)} \|_{L^2(G)} \leq C_{K,s} \|H\|_{S^{(s_1,\dots,s_n)}_2 W(\R^n)}.
\]
(cf.\ \cite[Proposition~5.2]{martini_joint_2012}). Since
\[\begin{split}
\|f\|_{S^{(s_1,\dots,s_n)}_2 W(\R^n)}^2 &\sim \int_{\R^n} |\hat f(\xi)|^2 (1+|\xi_1|)^{2s_1} \dots (1+|\xi_n|)^{2s_n} \,d\xi \\
&\leq \int_{\R^n} |\hat f(\xi)|^2 (1+|\xi|)^{2s_1+\dots+2s_n} \,d\xi \sim \|f\|^2_{W_2^{s}(\R^n)},
\end{split}\]
where $\hat f$ denotes the Euclidean Fourier transform of $f$, we see immediately that the estimate
\begin{equation}\label{eq:productestimate}
\|w_s \, \Kern_{H(L_1^\sharp,\dots,L_n^\sharp)} \|_{L^2(G)} \leq C_{K,s_1,\dots,s_n} \|H\|_{W_2^{s}(\R^n)},
\end{equation}
holds true whenever $K \subseteq \R$ is compact and $H : \R^n \to \C$ is supported in $K^n$.

Take now a compact set $K \subseteq \R$ and choose a smooth cutoff $\eta_K \in C^\infty_c(\R)$ such that $\eta_K|_{\leftclosedint 0,\max K\rightclosedint} = 1$. Let $F : \R \to \C$ be such that $\supp F \subseteq K$, and define $H : \R^n \to \C$ by
\[
H(\lambda_1,\dots,\lambda_n) = F(\lambda_1 + \dots + \lambda_n) \, \eta_K(\lambda_1) \dots \, \eta_K(\lambda_n)
\]
for all $(\lambda_1,\dots,\lambda_n) \in \R^n$. Then $\supp H \subseteq (\supp \eta_K)^n$, and
\[
F(\lambda_1 + \dots + \lambda_n) = H(\lambda_1,\dots,\lambda_n)
\]
for all $(\lambda_1,\dots,\lambda_n) \in \leftclosedint 0,\infty \rightopenint^n$. Since the operators $L_1,\dots,L_n$ are nonnegative, the joint spectrum of $L_1^\sharp,\dots,L_n^\sharp$ is contained in $\leftclosedint 0,\infty \rightopenint^n$, hence
\[
F(L) = F(L_1^\sharp + \dots + L_n^\sharp) = H(L_1^\sharp,\dots,L_n^\sharp).
\]
Consequently, by \eqref{eq:productestimate} and the smoothness of the map $(\lambda_1,\dots,\lambda_n) \mapsto \lambda_1+\dots+\lambda_n$ we obtain that
\[
\|w_s \Kern_{F(L)} \|_{L^2(G)} \\
\leq C_{K,s} \|H\|_{W_2^s(\R^n)} \leq C_{K,s} \|F\|_{W_2^s(\R)}.
\]
Since clearly $w_s^{-1} = w_{1,s_1}^{-1} \otimes \dots \otimes w_{n,s_n}^{-1} \in L^2(G)$, we are done.
\end{proof}

The previous results, together with the known weighted estimates for abelian \cite[Lemma~3.1]{mller_spectral_1994} and M\'etivier \cite{hebisch_multiplier_1993,hebisch_multiplier_1995,martini_joint_2012} groups, then yield the following extension of Theorem~\ref{thm:theorem}.

\begin{thm}
For $j=1,\dots,n$, suppose that $L_j$ is a homogeneous sublaplacian on a stratified Lie group $G_j$. Suppose further that, for each $j \in \{1,\dots,n\}$, at least one of the following conditions holds:
\begin{itemize}
\item $G_j$ and $L_j$ satisfy Assumption \AssA;
\item $G_j$ is a M\'etivier group;
\item $G_j$ is abelian.
\end{itemize}
Let $G = G_1 \times \dots \times G_n$ and $L = L_1^\sharp + \dots + L_n^\sharp$, as in Proposition~\ref{prp:product}. If $F : \R \to \C$ satisfies
\[\|F\|_{MW_2^s} < \infty\]
for some $s > (\dim G)/2$, then $F(L)$ is of weak type $(1,1)$ and bounded on $L^p(G)$ for all $p \in \leftopenint 1,\infty \rightopenint$.
\end{thm}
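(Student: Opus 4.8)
The plan is to show that the sublaplacian $L$ on $G$ satisfies Assumption \AssB{t} with $t = (\dim G)/2$, and then to extract the multiplier statement from that estimate exactly as Theorem~\ref{thm:theorem} was deduced from Proposition~\ref{prp:l1estimate}.

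First, for each factor $j \in \{1,\dots,n\}$ I would verify that $L_j$ satisfies Assumption \AssB{t_j} with $t_j = (\dim G_j)/2$, according to which of the three alternatives applies. If $G_j$ and $L_j$ satisfy Assumption \AssA, this is precisely what Proposition~\ref{prp:l1estimate} provides, once restated as ``Assumption \AssA\ implies Assumption \AssB{t} for $t = (\dim G_j)/2$''. If $G_j$ is abelian, its homogeneous dimension coincides with $\dim G_j$, and Assumption \AssB{t_j} with $t_j = (\dim G_j)/2$ is the standard weighted Plancherel estimate, in the form allowing multipliers that need not vanish near the origin (see \cite[Lemma~1.2]{mauceri_vectorvalued_1990}, \cite[Lemma~3.1]{mller_spectral_1994} and \cite[Theorem~2.7]{martini_joint_2012}). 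If $G_j$ is a M\'etivier group, Assumption \AssB{t_j} with $t_j = (\dim G_j)/2$ is exactly the improved weighted estimate of \cite{hebisch_multiplier_1993,hebisch_multiplier_1995,martini_joint_2012}. In all three cases one gets the sharp exponent $t_j = (\dim G_j)/2$.

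Next I would apply Proposition~\ref{prp:product}: since $L = L_1^\sharp + \dots + L_n^\sharp$ and each $L_j$ satisfies Assumption \AssB{t_j}, the sublaplacian $L$ on $G = G_1 \times \dots \times G_n$ satisfies Assumption \AssB{t} with
\[
t = t_1 + \dots + t_n = \frac{\dim G_1 + \dots + \dim G_n}{2} = \frac{\dim G}{2}.
\]
It then remains to turn this into the multiplier statement. Given $s > (\dim G)/2$, let $w_s : G \to \leftclosedint 1,\infty\rightopenint$ be the weight supplied by Assumption \AssB{t} with $t = (\dim G)/2$; then $w_s^{-1} \in L^2(G)$, and for every $F$ with $\supp F$ contained in a compact set $K$, H\"older's inequality gives
\[
\|\Kern_{F(L)}\|_1 \leq \|w_s^{-1}\|_2 \, \|w_s \, \Kern_{F(L)}\|_2 \leq C_{K,s} \, \|F\|_{W_2^s}.
\]
Thus the conclusion of Proposition~\ref{prp:l1estimate} holds for $L$ on $G$, and the weak type $(1,1)$ bound together with the $L^p$-boundedness for $p \in \leftopenint 1,\infty\rightopenint$ of $F(L)$ under the hypothesis $\|F\|_{MW_2^s} < \infty$, $s > (\dim G)/2$, follows by the Calder\'on--Zygmund argument of \cite[Theorem~4.6]{martini_joint_2012}.

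The only genuinely substantial input is the verification of Assumption \AssB{t_j} in each of the three cases: for factors satisfying Assumption \AssA\ this is the content of the preceding sections, for M\'etivier factors it rests on the cited results, and the abelian case is classical. Once these are available the argument is bookkeeping, the decisive structural point being that Assumption \AssB{t}, unlike Assumption \AssA, is stable under direct products with additive exponent (Proposition~\ref{prp:product}), which is exactly why mixing factors of different type is harmless. The one mild subtlety to keep in mind is that the three families of factors are a priori heterogeneous, so one must first route each of them through the common formalism of Assumption \AssB{t} before combining them via Proposition~\ref{prp:product}.
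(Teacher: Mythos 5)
Your proposal is correct and follows essentially the same route as the paper: the paper likewise rephrases Proposition~\ref{prp:l1estimate} as ``Assumption \AssA\ implies Assumption \AssB{t} with $t=(\dim G)/2$'', invokes the cited weighted estimates for the abelian and M\'etivier cases, combines the factors via Proposition~\ref{prp:product}, and concludes through \cite[Theorem~4.6]{martini_joint_2012}. Nothing is missing.
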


\section*{Acknowledgments}

I am deeply grateful to Detlef M\"uller for stimulating discussions and his continuous encouragement. I am also grateful for the opportunity of visiting the Mathematical Institute of the University of Wroc\l aw in the first bimester of 2011; numerous discussions, particularly with Jacek Dziuba\'nski and Waldemar Hebisch, have greatly contributed to my understanding of problems related to spectral multipliers.

\bibliographystyle{amsabbrv}
\bibliography{heisenbergreiter}

\providecommand{\bysame}{\leavevmode\hbox to3em{\hrulefill}\thinspace}
\providecommand{\MR}{\relax\ifhmode\unskip\space\fi MR }
\providecommand{\MRhref}[2]{%
  \href{http://www.ams.org/mathscinet-getitem?mr=#1}{#2}
}
\providecommand{\href}[2]{#2}
\begin{thebibliography}{10}

\bibitem{astengo_hardys_2000}
F.~Astengo, M.~Cowling, B.~Di~Blasio, and M.~Sundari, \emph{Hardy's uncertainty
  principle on certain {L}ie groups}, J. London Math. Soc. (2) \textbf{62}
  (2000), no.~2, 461--472.

\bibitem{astengo_gelfand_2009}
F.~Astengo, B.~Di~Blasio, and F.~Ricci, \emph{Gelfand pairs on the {H}eisenberg
  group and {S}chwartz functions}, J. Funct. Anal. \textbf{256} (2009), no.~5,
  1565--1587.

\bibitem{christ_multipliers_1991}
M.~Christ, \emph{{$L^p$} bounds for spectral multipliers on nilpotent groups},
  Trans. Amer. Math. Soc. \textbf{328} (1991), no.~1, 73--81.

\bibitem{de_michele_heisenberg_1979}
L.~De~Michele and G.~Mauceri, \emph{{$L^{p}$} multipliers on the {H}eisenberg
  group}, Michigan Math. J. \textbf{26} (1979), no.~3, 361--371.

\bibitem{erdelyi_higher2_1981}
A.~Erd{\'e}lyi, W.~Magnus, F.~Oberhettinger, and F.~G. Tricomi, \emph{Higher
  transcendental functions. {V}ol. {II}}, Robert E. Krieger Publishing Co.
  Inc., Melbourne, Fla.

\bibitem{folland_hardy_1982}
G.~B. Folland and E.~M. Stein, \emph{Hardy spaces on homogeneous groups},
  Mathematical Notes, vol.~28, Princeton University Press, Princeton, N.J.,
  1982.

\bibitem{folland_harmonic_1989}
G.~B. Folland, \emph{Harmonic analysis in phase space}, Annals of Mathematics
  Studies, vol. 122, Princeton University Press, Princeton, NJ, 1989.

\bibitem{hebisch_multiplier_1993}
W.~Hebisch, \emph{Multiplier theorem on generalized {H}eisenberg groups},
  Colloq. Math. \textbf{65} (1993), no.~2, 231--239.

\bibitem{hebisch_multiplier_1995}
W.~Hebisch and J.~Zienkiewicz, \emph{Multiplier theorem on generalized
  {H}eisenberg groups. {II}}, Colloq. Math. \textbf{69} (1995), no.~1, 29--36.

\bibitem{martini_multipliers_2010}
A.~Martini, \emph{{Algebras of differential operators on Lie groups and
  spectral multipliers}}, {Tesi di perfezionamento (PhD thesis)}, Scuola
  Normale Superiore, Pisa, 2010, \texttt{arXiv:1007.1119}.

\bibitem{martini_spectral_2011}
A.~Martini, \emph{{Spectral theory for commutative algebras of differential
  operators on Lie groups}}, J. Funct. Anal. \textbf{260} (2011), no.~9,
  2767--2814.

\bibitem{martini_joint_2012}
\bysame, \emph{{Analysis of joint spectral multipliers on Lie groups of
  polynomial growth}}, Ann. Inst. Fourier (Grenoble) \textbf{62} (2012), no.~4,
  1215--1263.

\bibitem{martini_n32}
A.~Martini and D.~M\"uller, \emph{{$L^p$ spectral multipliers on the free group
  $N_{3,2}$}},  (2012), Preprint. \texttt{arXiv:1210.8090}.

\bibitem{martini_grushin2}
\bysame, \emph{A sharp multiplier theorem for {G}rushin operators in arbitrary
  dimensions},  (2012), Preprint. \texttt{arXiv:1210.3564}.

\bibitem{martini_grushin}
A.~Martini and A.~Sikora, \emph{Weighted {P}lancherel estimates and sharp
  spectral multipliers for the {G}rushin operators},  (2012), To appear in
  \textit{Math. Res. Lett.} \texttt{arXiv:1204.1159}.

\bibitem{mauceri_vectorvalued_1990}
G.~Mauceri and S.~Meda, \emph{Vector-valued multipliers on stratified groups},
  Rev. Mat. Iberoamericana \textbf{6} (1990), no.~3-4, 141--154.

\bibitem{mller_spectral_1994}
D.~M{\"u}ller and E.~M. Stein, \emph{On spectral multipliers for {H}eisenberg
  and related groups}, J. Math. Pures Appl. (9) \textbf{73} (1994), no.~4,
  413--440.

\bibitem{mller_restriction_1990}
D.~M{\"u}ller, \emph{A restriction theorem for the {H}eisenberg group}, Ann. of
  Math. (2) \textbf{131} (1990), no.~3, 567--587.

\bibitem{mller_marcinkiewicz_1996}
D.~M{\"u}ller, F.~Ricci, and E.~M. Stein, \emph{Marcinkiewicz multipliers and
  multi-parameter structure on {H}eisenberg (-type) groups. {II}}, Math. Z.
  \textbf{221} (1996), no.~2, 267--291.

\bibitem{schmeisser_recent_2007}
H.-J. Schmeisser, \emph{Recent developments in the theory of function spaces
  with dominating mixed smoothness}, {Nonlinear Analysis, Function Spaces and
  Applications. Proceedings of the Spring School held in Prague, May 30-June 6,
  2006}, vol.~8, Czech Academy of Sciences, Mathematical Institute, Praha,
  2007, pp.~145--204.

\bibitem{thangavelu_lectures_1993}
S.~Thangavelu, \emph{Lectures on {H}ermite and {L}aguerre expansions},
  Mathematical Notes, vol.~42, Princeton University Press, Princeton, NJ, 1993.

\bibitem{torreslopera_cohomology_1985}
J.~F. Torres~Lopera, \emph{The cohomology and geometry of {H}eisenberg-{R}eiter
  nilmanifolds}, Differential geometry, {P}e\~n\'\i scola 1985, Lecture Notes
  in Math., vol. 1209, Springer, Berlin, 1986, pp.~292--301.

\end{thebibliography}

\end{document}